\documentclass[11pt,twoside]{article}
\usepackage{amsfonts,amssymb}
\usepackage{amsmath}
\usepackage{balance}

\usepackage{amssymb}
\usepackage{amsthm}
  \newtheorem{thm}{Theorem}[section]
  \newtheorem{lem}[thm]{Lemma}
  \newtheorem{prop}[thm]{Proposition}
   
  \newtheorem{cor}[thm]{Corollary}

  \theoremstyle{definition}
  \newtheorem{defn}{Definition}[section]

  \theoremstyle{remark}
  \newtheorem*{rem}{Remark}

\newcommand{\abs}[1]{\left|#1\right|}

\newcommand{\aleq}{\lesssim}

\title{Quasiregular values from generalized manifold with controlled geometry}
\author{Deguang Zhong \\Institute of Applied Mathematics, Shenzhen Polytechnic University,\\
Shenzhen, Guangdong, 518055, P. R. China\\}
\date{}

\chardef\bslchar=`\\ 

\providecommand{\qedsymbol}{\leavevmode
  \hbox to.77778em{%
  \hfil\vrule
  \vbox to.675em{\hrule width.6em\vfil\hrule}%
  \vrule\hfil}}

\catcode`\|=0
\begingroup \catcode`\>=13 
\gdef\?#1>{{\normalfont$\langle$\textit{#1}$\rangle$}}
\gdef\0{\relax}
\endgroup
\def\<#1>{{\normalfont$\langle$\textit{#1}$\rangle$}}

\def\latex/{{\protect\LaTeX}}

\usepackage{url}
\usepackage[breaklinks]{hyperref}

\begin{document}
\maketitle
\begin{abstract}
The main aim of this paper is to establish the Reshetnyak's theorem for quasiregualr values from generalized $n$-manifold with suitable controlled geometry to Euclidean space $\mathbb{R}^{n}.$
 This generalizes a previous result due to Kangasniemi and  Onninen on the setting of Euclidean space 
[A single-point Reshetnyak’s theorem, Trans. Amer. Math. Soc., $\mathbf{378}$(2025): 3105-3128].  
\end{abstract}
\begingroup
\small
\tableofcontents
\endgroup


\newpage 

\section{Introduction}
In the 1960s \cite{Res67}, Yu. G. Reshetnyak  introduced the following concept of quasiregular maps or maps of bounded distortion in Euclidean space. 
  \begin{defn}{\rm\cite{Res67}}\label{01900qs1}
  For a given constant $K\geq1$ and $n\geq2.$ Suppose that $\Omega\subset\mathbb{R}^{n}$  is an open connected set. Then, a mapping $f:\Omega\rightarrow\mathbb{R}^{n}$ is called quasiregular or a mapping of bounded distortion if it belongs to $W_{loc}^{1,n}(\Omega, \mathbb{R}^{n})$ and satisfies the following inequality 
  $$\vert Df(x)\vert^{n}\leq KJ_{f}(x)$$
  for almost every $x\in\Omega.$
  \end{defn}
If the constant $K$ in the Definition \ref{01900qs1} is replaced by a measure function $K:\Omega\rightarrow[1,\infty),$  then it leads to the definition of finite distortion mappings; c.f. \cite{HK14, IM01}.

Within the framework of nonlinear elasticity, investigating the topological features of finite distortion mappings $f$ in $\Omega\subset\mathbb{R}^{n}$ with distortion $K$ is of crucial importance. A  deep theorem of Reshetnyak \cite{Res67} states that each (locally non-constant) quasiregular map in a domain of $\mathbb{R}^{n}$ is discrete, open, and sense-preserving. Here we say a mapping $f:\Omega\rightarrow\mathbb{R}^{n}$ is discrete if for every $y_{0}\in\mathbb{R}^{n},$ the set $f^{-1}\{y_{0}\}$ consists of isolated points. A map $f:\Omega\rightarrow\mathbb{R}^{n}$  is said to be open if $f(U)$ is open set whenever $U$ is any open set of $\Omega.$ A continuous map is said to be sense-preserving if ${\deg}(y,f,D)>0$ whenever $D$ is a domain which compactly contained in $\Omega$ and $y\in f(D)\setminus f(\partial D).$ J. M. Ball \cite{Bal77,Bal81} showed that the quasiregularity is too strong for some problems in nonlinear elasticity. Hence, it is natural to consider the case for $K\in L_{loc}^{p}$ for some $p>0.$ For existing results on this problem in Euclidean spaces, we refer the reader to  \cite{Bal81,IS93,HK93,VM98,Bjö,HM02,HR13,KOZ26}. For the non-Euclidean setting, we refer to \cite{HH97,Zap08,Kir16,BV23,MR25,Zho251}. 
In particular, recently Kangasniemi and Onninen  \cite{KO222} has extended Reshetnyak's theorem to the case of quasiregular values in Euclidean spaces. Namely, they showed that a non-constant map $f\in W_{loc}^{1,n}(\Omega,\mathbb{R}^{n})$ satisfying the inequality
$$\vert Df(x)\vert^{n}\leq KJ_{f}(x)+\Sigma(x)\vert f(x)-y_{0}\vert^{n},$$
where $K\geq1$ is a constant, $y_{0}\in \mathbb{R}^{n}$ and $\Sigma\in L_{loc}^{1+\varepsilon}(\Omega)$ for some $\varepsilon>0,$ then $f^{-1}\{y_{0}\}$ is discrete, the local index $i(x,f)$ is positive in $f^{-1}\{y_{0}\}$, and every neighborhood of every point of $f^{-1}\{y_{0}\}$ is mapped to a neighborhood of $y_{0}.$ 

The main goal of this paper is to generalizes the above result of Kangasniemi and  Onninen \cite{KO222} for quasiregualr values from generalized $n$-manifold with suitable controlled geometry to Euclidean space $\mathbb{R}^{n}.$ It is read as follows.
 \begin{thm}\label{00990099199}
Let $K\geq1$ be a constant,  $y_{0}\in\mathbb{R}^{n},$ $\mathcal{S}\subset \mathbb{R}^{m}$ be a generalized $n$-manifolds with controlled geometry in the sense of subsection \ref{adjhaka} and 
$\Omega\subset\mathcal{S}$ be a domain. Suppose that $f\in N_{loc}^{1,n}(\Omega,\mathbb{R}^{n})$ is an nonconstant mapping  satisfying  the following inequality
\begin{equation}\label{---1}
\|apDf(x)\|^{n}\leq KJ_{f}(x)+\Sigma(x)\vert f-y_{0}\vert^{n}
\end{equation}
for almost every $x\in\Omega.$ If $\Sigma\in L_{loc}^{p}(\Omega)$ for some $p>1,$  then $f^{-1}\{y_{0}\}$ is discrete, the local index $i(x,f)$ is positive for every $x\in f^{-1}\{y_{0}\}$, and for every neighborhood $V$ of every  $x\in f^{-1}\{y_{0}\},$  there has $y_{0}\in {\rm int}f(V).$
\end{thm}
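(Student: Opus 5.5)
We follow the strategy of Kangasniemi and Onninen \cite{KO222}, transplanting each step to $\mathcal{S}$ using the controlled geometry of subsection \ref{adjhaka}. That geometry gives us Ahlfors $n$-regularity, a local $(1,1)$-Poincaré inequality, local linear contractibility, and the existence of an orientation/degree theory. Together these supply a Sobolev embedding, a Caccioppoli-type machinery, and a well-defined local index $i(x,f)$.

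The first step is a logarithmic change of variables. Set $u=\log|f-y_0|$ on $\Omega\setminus f^{-1}\{y_0\}$. From \eqref{---1}, using $\|apDf\|$ and the chain rule for Newtonian functions, we get
$$|\nabla u|^n\le \frac{\|apDf\|^n}{|f-y_0|^n}\le K\frac{J_f}{|f-y_0|^n}+\Sigma .$$
The quantity $J_f/|f-y_0|^n$ is the pullback under $f$ of the $n$-form $\omega=|y-y_0|^{-n}dy_1\wedge\cdots\wedge dy_n$. This form is exact away from $y_0$, with $\omega=d\alpha$ and $\alpha$ homogeneous of degree $1-n$.

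The second step is to integrate against test functions $\eta^n$ and integrate by parts, as Kangasniemi--Onninen do. In the Euclidean case they use that $f^*\alpha$ is controlled by $|Df|^{n-1}/|f-y_0|^{n-1}$. On $\mathcal{S}$ we instead use the Stokes/weak-differential identity for pullbacks of Newtonian maps into $\mathbb{R}^n$, which we take from the generalized manifold theory in the paper's preliminaries. This yields a Caccioppoli inequality
$$\int\eta^n|\nabla u|^n\lesssim \int |\nabla\eta|^n+\int\eta^n\Sigma .$$
It also yields the stronger statement that $u$ is a supersolution-type function of an $n$-Laplace-type structure with lower-order term $\Sigma\in L^p$, $p>1$.

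The third step is to show $u\in W^{1,n}_{loc}$ across the zero set and that the zero set is small. Using the Poincaré inequality and Ahlfors regularity, we run a Moser/John–Nirenberg argument to get $u\in BMO$ or exponential integrability. The hypothesis $p>1$ is exactly what makes the $\Sigma$ term subcritical here. The conclusion is that $|f-y_0|^{-\delta}\in L^1_{loc}$ for some $\delta>0$. Hence $f^{-1}\{y_0\}$ has measure zero, and in fact capacity zero, so it is totally disconnected.

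The fourth step upgrades this to discreteness and positive index. We use the single-value analogue of Reshetnyak's argument. Since $J_f\ge (\|apDf\|^n-\Sigma|f-y_0|^n)/K$ is nearly nonnegative near $f^{-1}\{y_0\}$, the degree $\deg(y_0,f,D)$ over small normal neighbourhoods $D$ is expressed as $\int_D f^*(\phi)$ for a bump $\phi$ concentrated at $y_0$, and this integral is positive. We then deduce $i(x,f)>0$, openness at $y_0$ (so $y_0\in{\rm int} f(V)$), and discreteness via the standard topological argument. For that argument we use the Černavskii–Väisälä type results valid on generalized manifolds. In particular, a light sense-preserving map has isolated preimages once the preimage is totally disconnected and the degree is positive.

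The main obstacle is the second step. Making sense of the pullback $f^*\alpha$ and of the integration by parts on a merely generalized manifold, with no smooth structure, requires a careful use of the weak Jacobian and degree theory for Newtonian maps on $\mathcal{S}$. We would first try approximating $\alpha$ by smooth forms supported away from $y_0$, using the change-of-variables/degree formula $\int_\Omega \varphi(f)J_f=\int\varphi(y)\deg(y,f,\cdot)\,dy$, and then pass to the limit.
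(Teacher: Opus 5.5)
Your Steps 2--4 have a genuine gap. The claimed inequality $\int\eta^n|\nabla u|^n\lesssim\int|\nabla\eta|^n+\int\eta^n\Sigma$ is false. So is the claim that $u=\log|f-y_0|$ lies in $W^{1,n}_{loc}$ across $f^{-1}\{y_0\}$.

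Take $\mathcal{S}=\mathbb{R}^n$, $f=\mathrm{id}$, $y_0=0$, $K=1$, $\Sigma=0$; all hypotheses hold. Then $|\nabla u|^n=|x|^{-n}\notin L^1_{loc}$. Now take $\eta$ vanishing on $B(0,a/2)$, equal to $1$ on $\{a<|x|<1/2\}$, supported in $B(0,1)$, with $|\nabla\eta|\le 2/a$ on the inner layer. Its support avoids the zero set, yet the left side is at least $\sigma_{n-1}\log\frac{1}{2a}$, while $\int|\nabla\eta|^n$ stays bounded as $a\to0$.

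The error is in your primitive. The form $\omega=|y-y_0|^{-n}\,dy_1\wedge\cdots\wedge dy_n$ has no primitive homogeneous of degree $1-n$ on $\mathbb{R}^n\setminus\{y_0\}$. Such a form has equal flux through every sphere centred at $y_0$, whereas $\int_{\{1<|y-y_0|<R\}}\omega=\sigma_{n-1}\log R$. The primitives are $(\log|y-y_0|-c)\,\beta$, with $\beta$ the closed solid-angle form. Integration by parts therefore only gives a Caccioppoli estimate weighted by $|u-c|$.

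Even that estimate extends across $f^{-1}\{y_0\}$ only if the truncation errors at the level $|f-y_0|=a$ vanish as $a\to0$. That essentially requires $\|{\rm ap}Df\|^n/|f-y_0|^n$ to be integrable there already. In effect $\log|f-y_0|$ behaves like a subsolution whose Riesz mass at zeros is the local index, which is the very quantity you want to prove positive.

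Consequently Step 3 (John--Nirenberg, capacity zero) has no foundation; note also that measure zero alone would not give total disconnectedness. Step 4 simply asserts the crux. From $J_f^-\le\Sigma|f-y_0|^n/K$ you only get $\deg(f,U_\varepsilon)\ge0$ on small normal components, not $>0$, because $\int_{U_\varepsilon}\phi(f)J_f^+$ has no a priori lower bound.

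The missing idea, which is how the paper proves Lemma \ref{bgjxolohcn}, is to argue by contradiction from $\deg(f,U_{\varepsilon_1})=0$.
\begin{itemize}
\item The Jacobian--degree formula $\deg(f,U)=\frac{1}{\omega_n\varepsilon^n}\int_U J_f$ needs continuity and Lusin's condition (N). Apply it to the components of $U_{\varepsilon_1}\cap f^{-1}(\mathbb{B}(y_0,r))$, each of which has nonnegative degree. This gives $\int_{U_{\varepsilon_1}\cap f^{-1}(\mathbb{B}(y_0,r))}J_f=0$ for a.e. $r$.
\item Integrate in $r$, using $J_f^-/|f-y_0|^n\le\Sigma/K\in L^1$. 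This yields $\int_{U_{\varepsilon_1}}J_f/|f-y_0|^n=0$, and hence $\int_{U_{\varepsilon_1}}\|{\rm ap}Df\|^n/|f-y_0|^n\le\int_{U_{\varepsilon_1}}\Sigma<\infty$.
\item Only now is $u\in N^{1,n}_{loc}$. The $|u-c|$-weighted Caccioppoli estimate, the Sobolev--Poincar\'e inequality and Gehring's lemma give $u\in N^{1,Q}_{loc}$ for some $Q>n$. So $u$ is continuous and locally bounded, contradicting $u=-\infty$ on $f^{-1}\{y_0\}$.
\end{itemize}

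Discreteness then follows by counting: the finite degree of a small normal component is at least the number of zeros it contains, each of positive index. It does not follow from \v{C}ernavski\u{\i}--V\"ais\"al\"a, whose lightness and sense-preservation hypotheses are unavailable at values other than $y_0$.

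You also skip two prerequisites.
\begin{itemize}
\item Continuity of $f$: only $f\in N^{1,n}_{loc}$ is assumed. The paper gets local H\"older continuity from a reverse H\"older inequality, Gehring's lemma and the Morrey embedding.
\item Total disconnectedness of $f^{-1}\{y_0\}$: this is needed to obtain compactly contained normal components at all. The paper derives it from $\log\log(1/|f-y_0|)$ via Kirsil\"a's $\mathcal{H}^1$ criterion, not from $\log|f-y_0|$.
\end{itemize}
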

In the rest of this paper are arranged is as follows: In Sect. \ref{sec2222}, we will recall some notations and well known results on Newtonian spaces on the setting of metric measure space. We also  explain some assumptions posed on the generalized $n$-manifold with suitable controlled geometry and discuss the definition of  quasirelular values or values of finite distortion on generalized $n$-manifold. In Sect. \ref{sec255555},
we will establish the locally H\"{o}lder continuity for some subclass of generalized finite distortion maps on generalized $n$-manifold. Some corollaries of this locally H\"{o}lder continuity are also given.
  In  Section \ref{sec3333}, we mainly establish the totally disconnected for values of finite distortion. In this section, we also discuss some corollaries on this totally disconnected.
  In Section \ref{schjcsjb}, we give the proof of Reshetnyak's theorem for quasiregular values from generalized $n$-manifold with suitable controlled geometry to Euclidean space $\mathbb{R}^{n}.$

\section{Preliminary}\label{sec2222}
In this section, we will recall some facts which we need in this paper. It should be pointed out that  $C(k_{1},k_{2},\ldots,k_{m})>0$ in this paper will be a constant that only depends on $k_{1},k_{2},\ldots,k_{m}$ but may vary from line to line. For any metric space $X,$ the distance between any two points $x,y\in X$ is donated by $\vert x-y\vert.$ Also,  we say $A\aleq B$ if there exists a constant $c>0,$ such that $A\leq cB.$
The symbol $f_{U}$ is defined as  $f_{U}:=\frac{1}{\mathcal{H}^{n}(U)}\int_{U} f d\mathcal{H}^{n}.$

\subsection{Newtonian spaces}
The objects of this paper is the generalized $n$-manifold, which in general does not admit a smooth structure. Consequently, the theory of Sobolev spaces in the smooth setting is no longer applicable. Instead, we employ the theory of Newtonian spaces on metric measure spaces. Newtonian spaces, introduced by N. Shanmugalingam in \cite{Sha00}, provide an analogue of Sobolev spaces in the metric context. This theory, which relies on the concept of upper gradients, is applicable to metric spaces that are rich enough to contain sufficiently many rectifiable paths. The main references  in this subsection are taken from \cite{Sha00,HKST01,HKST15}.
\begin{defn}
Suppose that $(X,\mu)$ is a metric measure space, where $\mu$ is a Borel regular measure. Let $\Gamma$ be  collection of paths in $X$ and $p\geq1.$ Then a Borel function 
$\rho:X\rightarrow[0,\infty]$ is said to be admissible for $\Gamma$  if the following inequality 
$$\int_{\gamma}\rho ds\geq1$$
holds for every locally rectifiable $\gamma\in\Gamma.$ In this case, we donate  $\rho\in{\rm Adm}\Gamma.$ Then, the $p$-modulus of $\Gamma$  is defined by
$${\rm Mod}_{p}\Gamma:=\inf\int_{X}\rho(x)^{p}d\mu,$$
where the infimum is taken over all admissible functions $\rho\in{\rm Adm}\Gamma.$
\end{defn}

\begin{defn}
Suppose that $f:X\rightarrow \overline{\mathbb{R}}$ is a mapping from metric spaces $X$ to $\overline{\mathbb{R}}.$ Then, a Borel function $g:X\rightarrow[0,\infty]$ is said to be an upper gradient of $f$ if the following inequality 
\begin{equation}\label{11}
\vert f(x)-f(y) \vert\leq\int_{\gamma}gds
\end{equation}
holds for every locally rectifiable path $\gamma:[0,1]\rightarrow X$ with $\gamma(0)=x$ and $\gamma(1)=y.$ Moreover, we say $g$ is a weak upper gradient of $f$ if there exists a path family
$\Gamma_{0},$ such that ${\rm Mod}_{p}(\Gamma_{0})=0$ and  the inequality  (\ref{11}) holds for every $\gamma \notin\Gamma_{0}.$
\end{defn}

\begin{defn}(Newtonian spaces)\cite{Sha00}
Suppose that $(X,\mu)$ is a metric space and $p\geq1.$ Then, we say a real-value mapping $f:X\rightarrow\overline{\mathbb{R}}$ belongs to $N^{1,p}(X)$ if $f\in L^{p}(X)$ and has 
a $p$-integrable $p$-weak upper gradient. We say a mapping $f:X\rightarrow\mathbb{R}^{m}$ belongs to $N^{1,p}(X,\mathbb{R}^{m})$ if its every component function  belongs to $N^{1,p}(X).$
A real-value mapping $f\in N_{loc}^{1,p}(X)$ if $f\in N^{1,p}(D)$ for every compact $D\subset X.$ Further, we say $N_{loc}^{1,p}(X,\mathbb{R}^{m})$ if $f\in N^{1,p}(D,\mathbb{R}^{m})$ for every compact $D\subset X.$
\end{defn}

\begin{defn}(Weak Sobolev-Poincar\'{e} inequality)\cite{HKST01,HKST15}
Let $1\leq p<q$ and $(X,\mu)$ be a metric measure space. Then, the space $X$ is said to support a weak Sobolev-Poincar\'{e} inequality if there exist constants $C>0$ and $\lambda\geq1,$ such that the following inequality 
\begin{equation}\label{djaadlla}
\left(\int_{ B(x,r)}\!\!\!\!\!\!\!\!\!\!\!\!\!\!\!\!\!\!\!\!\!\; {}-{} \,\;\;\,\,\,\;\;\abs{u-u_{B(x,r)}}^{q}\;d\mathcal{H}^{n}\right)^{1/q}\leq C{\rm diam}(B)
\left(\int_{ B(x,\lambda r)}\!\!\!\!\!\!\!\!\!\!\!\!\!\!\!\!\!\!\!\!\!\!\!\; {}-{} \,\;\,\;\;\,\,\,\;\;g^{p}\;d\mathcal{H}^{n}\right)^{1/p}
\end{equation}
for every $B(x,r)\subset X$ and for every integrable functions $u$ in $B(x,r)$ and for all $p$-weak upper gradients $g$ of $u.$
\end{defn}

The case for $q=1$ in the inequality (\ref{djaadlla}) is called the weak $p$-Poincar\'{e} inequality. 

\subsection{Generalized $n$-manifolds with controlled geometry}
In this paper, some  conditions on controlled geometry are posed on the metric space which we study. We first recall some related concepts. 

\subsubsection{Ahlfors $Q$-regular}
\begin{defn}\cite{HS02}
Suppose that $Q>0.$ Then, $\mathcal{S}\subset \mathbb{R}^{m}$ endowed with a Borel measure $\mu$ is said to be Ahlfors $Q$-regular if there exists a constant $C\geq1,$ such that the follow inequalities 
\begin{equation}\label{shiojcks}
\frac{1}{C}r^{Q}\leq \mu(B(x,r))\leq C r^{Q}
\end{equation}
holds for every $B(x,r)\subset \mathcal{S},$ where $r\leq{\rm diam}(\mathcal{S}).$
\end{defn}

It is well known that the measure $\mu$ satisfies (\ref{shiojcks}) can be replaced by the Hausdorff measure $\mathcal{H}^{n};$ see e.g. \cite[Lemma C.3]{Sem96}. Hence, the measure we always used on $\mathcal{S}\subset \mathbb{R}^{m}$ in this paper is 
the Hausdorff measure $\mathcal{H}^{n}.$
\subsubsection{Rectifiable sets}
Rectifiable sets constitute one of the most important classes of sets in geometric measure theory. It is defined by the following. 
\begin{defn}{\rm\cite{Fed69}}
Suppose that $\mathcal{S}\subset \mathbb{R}^{m}$ is $\mathcal{H}^{n}$-measurable, where $m\geq n\geq2.$ Then $\mathcal{S}$ is said to be $n$-rectifiable if there exist Lipschitz mappings $\varphi_{j}:
\mathbb{R}^{n}\rightarrow\mathbb{R}^{m},$ such that 
$$\mathcal{H}^{n}(\mathcal{S}\setminus\cup_{j=1}^{\infty}\varphi_{j}(\mathbb{R}^{n}))=0.$$
\end{defn}
A well known result on the property for rectifiable sets is stated as follows.
\begin{prop}{\rm\cite[Theorem 15.19]{Mat95}}\label{sfbgdnbd}
Suppose that $\mathcal{S}\subset \mathbb{R}^{m}$ is $n$-rectifiable and $\mathcal{H}^{n}$-measurable. Then for $\mathcal{H}^{n}$-almost all $x\in\mathcal{S},$ there is a unique approximate tangent 
$n$-plane $apTan(x,\mathcal{S})$ for $x$ at $\mathcal{S}.$
\end{prop}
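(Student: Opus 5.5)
The statement immediately above is Proposition \ref{sfbgdnbd}, the existence of approximate tangent planes for rectifiable sets. It is quoted from \cite[Theorem 15.19]{Mat95}, so in the paper it only needs a citation. Still, I would prove it along the standard route. The idea is to reduce to Lipschitz images and then use Rademacher's theorem together with density theorems.

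\emph{Step 1: reduce to $C^{1}$ pieces.} By the definition of $n$-rectifiability, $\mathcal{H}^{n}$-almost all of $\mathcal{S}$ is covered by countably many sets $\varphi_{j}(\mathbb{R}^{n})$ with $\varphi_{j}$ Lipschitz. By Rademacher's theorem and a Whitney/Lusin-type approximation (Federer 3.1.16), each Lipschitz map agrees with a $C^{1}$ map off a set of arbitrarily small measure. Images of null sets under Lipschitz maps are $\mathcal{H}^{n}$-null. Hence $\mathcal{S}$ is covered, up to an $\mathcal{H}^{n}$-null set, by countably many embedded $C^{1}$ $n$-submanifolds $M_{i}$. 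The embedded condition comes from restricting to pieces where $D\varphi$ has rank $n$; the rank-deficient part has null image by the area formula.

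\emph{Step 2: density arguments.} Let $x\in \mathcal{S}\cap M_{i}$ be a point with the following three properties:
\begin{itemize}
\item the upper density $\Theta^{*n}(\mathcal{S}\setminus M_{i},x)=0$, which holds $\mathcal{H}^{n}$-a.e. on $M_{i}$ by the standard density theorem for $\mathcal{H}^{n}\llcorner(\mathcal{S}\setminus M_{i})$ (this requires local finiteness of $\mathcal{H}^{n}\llcorner\mathcal{S}$, which I would assume, as Mattila does);
\item the density $\Theta^{n}(\mathcal{S}\cap M_{i},x)=1$, which holds because $M_{i}$ is $C^{1}$;
\item the point $x$ is not in the exceptional null set from Step 1.
\end{itemize}
Rescaling $\mathcal{H}^{n}\llcorner\mathcal{S}$ at $x$, the measures $r^{-n}(T_{x,r})_{\#}\mathcal{H}^{n}\llcorner\mathcal{S}$ then converge weakly to $\mathcal{H}^{n}\llcorner T_{x}M_{i}$. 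The $M_{i}$ part converges by the $C^{1}$ property, and the remaining part has vanishing density. This is exactly the statement that $apTan(x,\mathcal{S})=T_{x}M_{i}$.

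\emph{Step 3: uniqueness.} Two approximate tangent planes at $x$ would give two different weak limits of the same blow-up sequence, which is impossible.

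The main obstacle is Step 2: establishing that the complement of $M_{i}$ has zero density a.e. on $M_{i}$. This needs local finiteness of $\mathcal{H}^{n}\llcorner\mathcal{S}$. In our setting that is supplied by Ahlfors $n$-regularity.
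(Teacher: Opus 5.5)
Your proposal is the standard textbook argument for this result. The paper gives no proof of its own; it only cites Mattila's Theorem 15.19. The route is: cover $\mathcal{S}$ up to a null set by $C^{1}$ submanifolds, dispose of $\mathcal{S}\setminus M_{i}$ with the density theorem, and blow up. Two steps need a small repair.

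\textbf{Second bullet of Step 2.} $C^{1}$-regularity gives $\Theta^{n}(M_{i},x)=1$ at every $x\in M_{i}$. It does not give $\Theta^{n}(\mathcal{S}\cap M_{i},x)=1$: take $x$ on the edge of a half-disc lying in a plane. The latter holds only for $\mathcal{H}^{n}$-a.e.\ $x\in\mathcal{S}\cap M_{i}$. It follows from the density theorem applied to $\mathcal{H}^{n}\llcorner(M_{i}\setminus\mathcal{S})$, or from Lebesgue's density theorem in a chart of $M_{i}$. This is exactly what lets you pass from the blow-ups of $\mathcal{H}^{n}\llcorner M_{i}$ to those of $\mathcal{H}^{n}\llcorner(\mathcal{S}\cap M_{i})$.

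\textbf{Step 3.} Uniqueness is automatic only for the blow-up definition of $apTan$. Mattila's definition is the cone one: positive upper density, and $r^{-n}\mathcal{H}^{n}(\mathcal{S}\cap B(x,r)\setminus X(x,W,s))\to0$ for all $s>0$, where $X(x,W,s)=\{y:{\rm dist}(y-x,W)<s\vert y-x\vert\}$. With that definition you must add one line. Weak convergence to $\mathcal{H}^{n}\llcorner V$ together with the cone condition for $W$ gives $\mathcal{H}^{n}(V\cap B(0,1)\setminus\overline{X(0,W,s)})=0$ for every $s>0$. Hence $V\subset W$, and so $V=W$.

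\textbf{Local finiteness.} Your caveat here is not a technicality. The paper's statement drops the hypothesis $\mathcal{H}^{n}(\mathcal{S})<\infty$ under which the cited theorem is stated, and without some local finiteness the statement is false. Take $\mathcal{S}=\mathbb{R}^{n}\times(\{0\}\cup\{1/k:k\in\mathbb{N}\})\subset\mathbb{R}^{n+1}$, which is closed and $n$-rectifiable in the paper's sense. At every point of $\mathbb{R}^{n}\times\{0\}$ the cone condition fails for every $n$-plane. For the horizontal plane this is caused by the sheets accumulating from above; for any other plane it is caused by $\mathbb{R}^{n}\times\{0\}$ itself. Moreover, every ball centred there has infinite $\mathcal{H}^{n}$-measure, so no blow-up limit exists either. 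In the paper's setting Ahlfors $n$-regularity restores local finiteness. So the version you prove is both the true one and the one actually used.
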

Suppose that $U$ is an open subset of $\mathcal{S}.$ Then, according to Proposition \ref{sfbgdnbd}, we see for almost every $x\subset U,$ there exists a unique $n$-dimensional plane  $apTan(x,U).$
The symbol $TU$ stands for the collection of all these planes associated to points in $U.$  

\subsubsection{Generalized $n$-manifolds}
Assume that the space $X$ is locally compact, separable, connected,  locally connected. Then, from \cite[Definition 1.1]{HS02}, we say that the space $X$ is a $\mathbf{cohomology}$ $n$-$\mathbf{manifold}$ if its topological dimension is at most $n$ and 
 local cohomology groups satisfying $H_{c}^{n}(V)=n$ and  $H_{c}^{n-1}(V)=0.$ Here $V$ is an open neighborhood of every $x\in X$ which contained in every neighborhood $U$ of $x,$ and $H_{c}^{*}$ stands for the Alexander-Spanier cohomology groups with compact supports.
\begin{defn}\cite[Definition 1.6]{HS02}(Generalized $n$-manifolds)\label{ajojlaxax}
Suppose that  $n\geq2.$ Then, a space $X$ is said to be a generalized $n$-manifold if it is a finite-dimensional cohomology $n$-manifold.
\end{defn}
 
 \subsubsection{Orientable and metrically orientable}
 The following concept on oriented generalized $n$-manifolds can be found from  \cite[Section 2]{HS02}.
\begin{defn}\cite{HS02}(Oriented generalized $n$-manifolds)
Suppose that the space $X$ is a generalized $n$-manifold. Then $X$ is said to be orientable if $H_{c}^{n}(X)\approxeq\mathbb{Z},$ and a generator $g_{X}$ is called an orientation of $X.$ The pair $(X, g_{X})$ is called an oriented generalized $n$-manifold. 
\end{defn}
 Since for every open neighborhood $V$ of every $x,$ the following standard homomorphism
 \begin{equation}\label{djhaopjkdcja}
 H_{c}^{n}(W)\rightarrow H_{c}^{n}(V)
 \end{equation} 
  is a surjection. Here $W\subset V$ is an open neighborhood of $x.$ Hence, if we suppose that $\mathcal{S}$ is an oriented, generalized $n$-manifold and  $U$ is an open and  connected subset of $\mathcal{S}.$ Then the mapping in (\ref{djhaopjkdcja})  induced a fixed orientation of
  $U.$ In addition,  from  \cite[Lemma 3.2.25]{Fed69}, we see that there is a measurable choice of orientation $g_{x}$  on each $apTan(x,U)$ in $TU,$ and we call this is an orientation of the tangent bundle $TU.$
Now, from  \cite[Remark 2.14]{Kir14} it gives that if an open connected neighborhood $D$ of $x$ in $U$ is small enough, then the projection $$\pi_{x}: \mathbb{R}^{m}\rightarrow apTan(x,U)
\setminus\pi_{x}(\partial D)$$ satisfying $x\notin\pi(\partial D).$ Thus, if $Z$ is the $x$-component of $apTan(x,U)\setminus\pi_{x}(\partial D),$ then we have
\begin{equation}\label{djaihd}
H_{c}^{n}(apTan(x,U))\leftarrow H_{c}^{n}(Z)\stackrel{\pi_{x}^{*}}{\longrightarrow}H_{c}^{n}(\pi_{x}^{-1}(Z)\cap D)\rightarrow H_{c}^{n}(D)\rightarrow H_{c}^{n}(U).
\end{equation}
 Here, the symbol $\pi_{x}^{*}$ stands for  the homomorphism induced by $\pi_{x},$ and unnamed arrows represent canonical isomorphisms induced by embeddings. Now,  we can give the following concepts.
 \begin{defn}{\rm\cite{HK11,HR02,HS02}}(Metrically orientable and  locally metrically orientable)
Suppose that $U$ is the open subset of $\mathcal{S}$ discussed above. Then, $U$ is said to be  metrically orientable if there is an orientation of tangent bundle $TU$ such that for almost every
 $x\in U$ $g_{x}$ is mapped to $g_{U}$ under the mapping represented above. We says $U$ is metrically oriented if such an orientation of $TU$ is chosen.  We say that $\mathcal{S}$ is locally metrically 
 orientable if every point in $\mathcal{S}$  has a neighborhood that is metrically orientable. 
\end{defn}

 \subsubsection{Linearly locally contractible}
The following concept on linearly locally contractible can also be found in \cite{HK11,HR02,HS02}.
\begin{defn}
For a metric space $\mathcal{S},$ if there is a constant $C\geq1,$ such that for every compact $D\subset\mathcal{S},$ there exists a positive number $r_{D},$ such that $B(x,r)$ is contractible 
in $B(x,Cr)$ for $r\leq r_{D}$ and $x\in D.$ Then, we say $\mathcal{S}$ is linearly locally contractible.
\end{defn}
 
 \subsubsection{Local degree and local index}
\begin{defn}\cite{HS02}(Local degree)
Suppose that $f: X\rightarrow Y$ is continuous mapping, where $X$ and $Y$ are two oriented generalized $n$-manifolds. For each relatively compact domain $D$ in $X$  and for each component $V$ of 
$Y\setminus f(\partial D),$ the following composition of two maps
$$H_{c}^{n}(V)\rightarrow H_{c}^{n}(f^{-1}(V)\cap D)\rightarrow H_{c}^{n}(D)$$
gives the mapping 
$$g_{V}\mapsto m\cdot g_{D}.$$ 
 Here, the integer $m$ is called the local degree of $f$ at a point $y\in V$ with respect to $D,$ and is denoted by $m={\rm deg}(f,D,y).$ 
\end{defn}

\begin{defn}\cite{KO222}(Local index)
 Let $\Omega$ be a connected domain, $y_{0}\in\mathbb{G}$ and   $f:\Omega\rightarrow\mathbb{G}$ is continuous. Suppose that $x_{0}\in f^{-1}\{y_{0}\}.$ If $V_{1}$ and $V_{2}$ are two neighborhoods of $x_{0}$ such that $\overline{V_{1}}\cap f^{-1}\{y_{0}\}=\overline{V_{2}}\cap f^{-1}\{y_{0}\}=\{x_{0}\},$ then $y_{0}\notin f(V_{i}), i=1,2.$ Hence, it is meaningful to define  ${\rm deg}(f,V_{i},y_{0}),i=1,2.$ In fact, it well known that ${\rm deg}(f,V_{1},y_{0})={\rm deg}(f,V_{2},y_{0})={\rm deg}(f,V_{1}\cup V_{2},y_{0}).$ Therefore, if there exist a neighborhood $V$ of $x_{0},$ such that 
$\overline{V}\cap f^{-1}\{y_{0}\}=\{x_{0}\},$ then the same value of ${\rm deg}(f,V,y_{0})$ is regardless of the choice of $V.$ We call this value the local index of $f$ at $x_{0}$ and  record it as $i(x_{0},f).$
\end{defn}

 \subsubsection{Generalized $n$-manifolds with controlled geometry}\label{adjhaka}

By an oriented generalized $n$-manifold $\mathcal{S}\subset \mathbb{R}^{m}$  with controlled  geometry we mean an oriented  generalized $n$-manifold $\mathcal{S}$ equipped with a metric for which

(1) $\mathcal{S}$ is $n$-rectifiable;

(2) $\mathcal{S}$ is Ahlfors $n$-regular;

(3) $\mathcal{S}$ is linearly locally contractible.

This class of spaces fits into the framework of controlled geometry in the sense of Heinonen–Koskela \cite{HK98}. Actually,  a deep theorem of Semmes \cite{Sem96} states that if a metric measure space $\mathcal{S}\subset \mathbb{R}^{m}$ is Ahlfors $Q$-regular and  has cohomology modules and is linearly locally contractible, then 
$\mathcal{S}$  supports a weak $1$-Poincar\'{e} inequality, and hence a weak $p$-Poincar\'{e} inequality for all $p\geq1.$ Together with the local homological properties of a generalized manifold, these conditions ensure that analytic and topological tools—including proper maps, degrees, and Sobolev regularity—are well-defined and stable.

In addition, it should be noted that as $\mathcal{S}$  supports  a weak $p$-Poincar\'{e} inequality for all $p\geq1,$   it implies from \cite[Theorem 8.3.2 and Corollary 9.1.36]{HKST15} that $\mathcal{S}\subset \mathbb{R}^{m}$  supports the following Sobolev-Poincar\'{e} inequality
\begin{equation}\label{asddfvddjaadlla}
\left(\int_{ B(x,r)}\!\!\!\!\!\!\!\!\!\!\!\!\!\!\!\!\!\!\!\!\!\; {}-{} \,\;\;\,\,\,\;\;\abs{u-u_{B(x,r)}}^{\frac{np}{n-p}}\;d\mathcal{H}^{n}\right)^{\frac{n-p}{np}}\leq C{\rm diam}(B)
\left(\int_{ B(x,\lambda r)}\!\!\!\!\!\!\!\!\!\!\!\!\!\!\!\!\!\!\!\!\!\!\!\; {}-{} \,\;\,\;\;\,\,\,\;\;\abs{g}^{p}\;d\mathcal{H}^{n}\right)^{1/p}
\end{equation}
for every $B(x,r)\subset \mathcal{S}$ with $r\leq {\rm diam}\mathcal{S}$ and for every integrable functions $u$ in $B(x,r)$ and for all upper gradients $g$ of $u.$ Combing this with the density of Lipschitz 
mappings of $f=(f_{1,},\ldots,f_{j},\ldots,f_{n})\in N_{loc}^{1,p}(\mathcal{S},\mathbb{R}^{n})$ and \cite[Lemma 2.29]{Kir16}, we obtain
\begin{equation}\label{asddfvddjaadlla}
\left(\int_{ B(x,r)}\!\!\!\!\!\!\!\!\!\!\!\!\!\!\!\!\!\!\!\!\!\; {}-{} \,\;\;\,\,\,\;\;\abs{f_{j}-(f_{j})_{B(x,r)}}^{\frac{np}{n-p}}\;d\mathcal{H}^{n}\right)^{\frac{n-p}{np}}\leq C{\rm diam}(B)
\left(\int_{ B(x,\lambda r)}\!\!\!\!\!\!\!\!\!\!\!\!\!\!\!\!\!\!\!\!\!\!\!\; {}-{} \,\;\,\;\;\,\,\,\;\;\abs{apDf}^{p}\;d\mathcal{H}^{n}\right)^{1/p}
\end{equation}
for every $B(x,r)\subset \mathcal{S}$ with $r\leq {\rm diam}\mathcal{S}$ and for every $j\in\{1,2,\ldots,n\}.$

\subsection{Quasiregular values and values of finite distortion}
  As mentioned in the introduction, Yu. G. Reshetnyak  \cite{Res67} introduced the following concept of quasiregular maps or maps of bounded distortion in Euclidean space in the 1960s. 
  \begin{defn}{\rm\cite{Res67}}\label{019001}
  For a given constant $K\geq1$ and $n\geq2.$ Suppose that $\Omega\subset\mathbb{R}^{n}$  is an open connected set. Then, a mapping $f:\Omega\rightarrow\mathbb{R}^{n}$ is called quasiregular or a mapping of bounded distortion if it belongs to $W_{loc}^{1,n}(\Omega, \mathbb{R}^{n})$ and satisfies the following inequality 
  $$\vert Df(x)\vert^{n}\leq KJ_{f}(x)$$
  for almost every $x\in\Omega.$
  \end{defn}
 For further properties and generalizations of quasiregular mappings, see \cite{Ric93,HK14, IM01, Guo15, GGWX25,GW16}. For a given constant $K\geq1.$ Suppose that $\Omega\subset\mathbb{R}^{n},n\geq2,$  is a domain  and $\Sigma:\Omega\rightarrow[0,\infty)$ is a  measure function. Then, a mapping $f:\Omega\rightarrow\mathbb{R}^{n}$ belongs to the class of Sobolev space $W_{loc}^{1,n}(\Omega, \mathbb{R}^{n})$ and satisfies the following inequality 
 \begin{equation}\label{666y1}
\vert Df(x)\vert^{n}\leq KJ_{f}(x)+\Sigma(x)\vert f(x)\vert^{n}
\end{equation}
  for almost every $x\in\Omega,$ was introduced by Astala, Iwaniec and Martin \cite[Section 8.5]{ATM09}. The inequality  (\ref{666y1}) was called the heterogeneous distortion inequality in \cite{KO221}.
For the properties and applications for mappings relative to inequality (\ref{666y1}), we refer the reader to \cite{Nir53,FS58,Har58,Sim77,AP06}.  Recently,  Kangasniemi and  Onninen \cite{KO222} give the following concept of quasiregular values.
 \begin{defn}\cite[Definition 1.1]{KO222} 
  For a given constant $K\geq1.$ Suppose that $\Omega\subset\mathbb{R}^{n}$  is a domain, $y_{0}\in\mathbb{R}^{n}$ and $\Sigma\in L_{loc}^{p}(\Omega)$ for some $p>1.$  Then, a mapping $f:\Omega\rightarrow\mathbb{R}^{n}$ has a $(K,\Sigma)$-quasiregular value at $y_{0}$ if it belongs to $W_{loc}^{1,n}(\Omega, \mathbb{R}^{n})$ and satisfies the following inequality
  \begin{equation}
  \vert Df(x)\vert^{n}\leq KJ_{f}(x)+\Sigma(x)\vert f(x)-y_{0}\vert^{n}
  \end{equation}
  for almost every $x\in\Omega.$
  \end{defn}
We refer the reader to \cite{KO222,KO241, KO242} for the properties, for example the H\"{o}lder continuity, Liouville theorem, Reshetnyak’s theorem, Rickman’s Picard Theorem,  linear distortion and rescaling properties,  of  quasiregular values established by  Kangasniemi and  Onninen. More recently, Dole\v{z}alov\'{a},  Kangasniemi and Onninen \cite{DKO24} introduced and established the continuity for the Sobolev maps $f\in W_{loc}^{1,n}(\Omega,\mathbb{R}^{n})$ in Euclidean space which satisfying the following inequality
\begin{equation}\label{000032dcsc}
\vert Df(x)\vert^{n}\leq K(x) J_{f}(x)+\Sigma(x)
\end{equation}
for almost everywhere $x\in\Omega.$ Here $K:\Omega\rightarrow[1,\infty)$ and $\Sigma:\Omega\rightarrow[0,\infty)$ are the measurable functions. The author \cite{Zho252} generalized this kinds of mappings satisfying inequality (\ref{000032dcsc}) to generalized finite distortion curves from Euclidean domain to Riemannian manifolds.  Based on the developments for quasiregular maps and its generalizations discussed above, the concept for analogous mappings satisfying inequality (\ref{000032dcsc}) from generalized $n$-manifolds with controlled geometry to Euclidean space is natural to give; see Definition \ref{df1}. 
\begin{defn}
Suppose that $f\in N_{loc}^{1,n}(\mathcal{S},\mathbb{R}^{n}).$ Then, by a result in \cite[Corollary 2.17]{Kir14}, we see that $f$ is approximately differentiable $\mathcal{H}^{n}$-almost everywhere. This 
approximate derivative ${\rm ap}Df(x):apTan(x,\mathcal{S})-x\rightarrow \mathbb{R}^{n}$ is a linear mapping from the shifted approximate tangent plane $apTan(x,\mathcal{S});$ c.f. \cite[Definition 3.2.16]{Fed69}.
Its Jacobian determinatant is defined by $J_{f}:={\rm det\,ap}Df(x).$
\end{defn}

\begin{defn}\label{df1}
We say that $f\in N_{loc}^{1,n}(\mathcal{S},\mathbb{R}^{n})$ is a generalized finite distortion map if it satisfies the following inequality
\begin{equation}
\| {\rm ap}Df(x)\|^{n}\leq K(x)J_{f}(x)+\Sigma(x)
\end{equation}
for almost everywhere $x\in\Omega.$ Here $K:\Omega\rightarrow[1,\infty)$ and $\Sigma:\Omega\rightarrow[0,\infty)$ are the measurable functions, and $\| {\rm ap}Df(x)\|$ is defined by 
$\| {\rm ap}Df(x)\|:=\sup_{\vert y\vert=1}\vert apDf(x)y\vert.$
\end{defn}

 The following concept of values of finite distortion is analogical to the one in the setting of Euclidean space given  by Kangasniemi and Onninen \cite{KO25}.
\begin{defn}\label{df2}
Suppose that $f\in N_{loc}^{1,n}(\mathcal{S},\mathbb{R}^{n})$ and $y_{0}\in\mathbb{R}^{n}.$  Then $f$ is said to be having a $(K, \Sigma)$-finite distortion value at $y_{0}$ if it satisfies the following inequality
\begin{equation}\label{df2111111}
\| {\rm ap}Df(x)\|^{n}\leq K(x)J_{f}(x)+\Sigma(x)\vert f(x)-y_{0}\vert^{n}
\end{equation}
for almost everywhere $x\in\Omega.$ Here $K:\Omega\rightarrow[1,\infty)$ and $\Sigma:\Omega\rightarrow[0,\infty)$ are  measurable functions.
\end{defn}

\begin{rem} 
It is worth noting that in Definitions \ref{df1} and \ref{df2}, it still allows for $\Omega$ to have regions where $J_{f}$  is negative.
\end{rem}

 \section{H\"{o}lder regularity for generalized finite distortion and its applications}\label{sec255555}

\subsection{H\"{o}lder regularity for generalized finite distortion mappings}
In this section, we will establish the H\"{o}lder regularity for generalized finite distortion. In order to do that, we first need the following lemma which obtained by Kirsil\"{a} \cite[Theorem 3.1]{Kir16}.
\begin{lem}{\rm \cite[Theorem 3.1]{Kir16}}\label{Kir16}
  Suppose that $f\in N_{loc}^{1,n}(\mathcal{S},\mathbb{R}^{n}),$ $\Omega\subset\mathcal{S}$ is a relatively compact open set and $\eta:\Omega\rightarrow \mathbb{R}$ is a Lipschitz mapping with 
  ${\rm spt}\eta\subset\Omega.$ Then the following equation
  \begin{equation}
  \int_{\Omega}\eta J_{f}d\mathcal{H}^{n}=-\int_{\Omega}f_{j}J(f_{1},\cdots,f_{j-1},\eta,f_{j+1},\cdots,f_{n})d\mathcal{H}^{n}
  \end{equation}
  holds for each $1\leq j\leq n.$
\end{lem}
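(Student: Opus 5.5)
The plan is to reduce the identity to an algebraic product rule plus a vanishing statement for Jacobians of maps with one component compactly supported. The vanishing statement is proved first for Lipschitz maps by a degree argument, and then extended to $N_{loc}^{1,n}$ by approximation.

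\textbf{Step 1: Lipschitz $f$, reduction.} Let $f$ be Lipschitz on a neighbourhood of $\overline{\Omega}$. Approximate differentiability gives the pointwise product rule ${\rm ap}D(\eta f_j)=\eta\,{\rm ap}Df_j+f_j\,{\rm ap}D\eta$ $\mathcal{H}^n$-a.e. on the approximate tangent planes. Since the determinant is multilinear in its rows, this yields
\begin{equation*}
\eta J_f=J(f_1,\dots,\eta f_j,\dots,f_n)-f_jJ(f_1,\dots,f_{j-1},\eta,f_{j+1},\dots,f_n)\quad \mathcal{H}^n\text{-a.e.}
\end{equation*}
It therefore suffices to show that
\begin{equation*}
\int_\Omega J_g\,d\mathcal{H}^n=0 \quad\text{for } g=(f_1,\dots,\eta f_j,\dots,f_n).
\end{equation*}

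\textbf{Step 2: vanishing via degree.} Choose a relatively compact domain $D$ with ${\rm spt}\,\eta\subset D\subset\Omega$. Then $g_j=\eta f_j$ vanishes on $\partial D$, so $g(\partial D)\subset\{y_j=0\}$. Each open half-space $\{y_j>0\}$ and $\{y_j<0\}$ therefore lies in a single component of $\mathbb{R}^n\setminus g(\partial D)$. Each of these half-spaces is unbounded and so contains points outside the compact set $g(\overline D)$. Hence ${\rm deg}(g,D,y)=0$ for every $y$ with $y_j\neq0$, that is, for a.e. $y$. I would then use the signed area formula
\begin{equation*}
\int_D J_g\,d\mathcal{H}^n=\int_{\mathbb{R}^n}{\rm deg}(g,D,y)\,dy
\end{equation*}
for Lipschitz $g$ on the metrically oriented $\mathcal{S}$, which gives the claim.

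\textbf{Main obstacle: the signed area formula.} The unsigned area formula on the rectifiable set $\mathcal{S}$ is standard (Federer). The difficulty is the sign bookkeeping: showing that the sign of $J_g(x)$, computed with the measurable orientation $g_x$ of $apTan(x,\mathcal{S})$, matches the contribution of $x$ to the cohomological degree. This is exactly where metric orientability, through the diagram (\ref{djaihd}), enters. My first attempt would be to prove it locally at density points, comparing $g$ with $g\circ\pi_x^{-1}$ on $Z$. Alternatively, I would derive it from the known case in \cite{HS02,Kir14}: for a.e. $y$ the preimage is finite and consists of points of approximate differentiability with $J_g\ne0$, and the local index there equals ${\rm sgn}\,J_g$ by the metric orientation.

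\textbf{Step 3: approximation.} For general $f\in N_{loc}^{1,n}$, use the density of Lipschitz maps, which holds because $\mathcal{S}$ supports a Poincar\'e inequality by Semmes' theorem. Take Lipschitz $f^k\to f$ in $N^{1,n}$ on a neighbourhood of ${\rm spt}\,\eta$, together with ${\rm ap}Df^k\to{\rm ap}Df$ in $L^n$ (\cite[Lemma 2.29]{Kir16}). Then $J_{f^k}\to J_f$ in $L^1$ by multilinearity and H\"older's inequality. For the right-hand side, $f^k_jJ(\dots,\eta,\dots)$ is a product of one $L^n$ function, $n-1$ gradient factors in $L^n$, and the bounded factor ${\rm Lip}\,\eta$. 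By H\"older's inequality with exponents $n$ and $n/(n-1)$, these terms converge in $L^1$. Passing to the limit in the Lipschitz identity completes the proof.
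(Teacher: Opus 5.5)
The paper gives no argument for this lemma. It is quoted from \cite[Theorem 3.1]{Kir16}, so there is no internal proof to compare with. Your reconstruction is correct and follows the natural route:
\begin{itemize}
\item The identity $J(f_1,\dots,\eta f_j,\dots,f_n)=\eta J_f+f_jJ(f_1,\dots,\eta,\dots,f_n)$ reduces everything to $\int J_g\,d\mathcal{H}^n=0$ for $g=(f_1,\dots,\eta f_j,\dots,f_n)$.
\item For Lipschitz data this vanishing holds because ${\rm deg}(g,D,\cdot)=0$ off the null hyperplane $\{y_j=0\}$, which contains $g(\partial D)$.
\item The extension to $N^{1,n}_{loc}$ by Lipschitz density, with ${\rm ap}Df^k\to{\rm ap}Df$ in $L^n$, is routine via H\"older.
\end{itemize}
The Lipschitz-level inputs you need are the a.e.\ sign formula for the degree and the area formula on $\mathcal{S}$. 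These are exactly what the paper itself borrows from \cite{Kir16} in the proof of Lemma \ref{lomlm}.

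Three points of precision follow.

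\textbf{The signed area formula.} Do not assert $\int_D J_g\,d\mathcal{H}^n=\int_{\mathbb{R}^n}{\rm deg}(g,D,y)\,dy$ for arbitrary Lipschitz $g$. The degree is undefined on $g(\partial D)$, and when $\mathcal{H}^n(g(\partial D))>0$ the identity can fail. A counterexample is the fold $(x_1,x_2)\mapsto(\vert x_1\vert,x_2)$ on a disc from whose right half a fat totally disconnected compact set has been removed. The correct tool is the a.e.\ identity ${\rm deg}(g,D,y)=\sum_{x\in g^{-1}(y)\cap D}{\rm sgn}J_g(x)$ for $y\notin g(\partial D)$, combined with the unsigned area formula. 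Since your $g(\partial D)$ lies in $\{y_j=0\}$, this yields exactly $\int_D J_g\,d\mathcal{H}^n=0$.

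\textbf{The choice of $D$.} If $\Omega$ is disconnected, there need not be a domain $D$ with ${\rm spt}\,\eta\subset D\subset\Omega$. Take instead any relatively compact domain of $\mathcal{S}$ containing ${\rm spt}\,\eta$; this exists because $\mathcal{S}$ is connected and locally connected. The change is harmless, since $\eta J_f$, $J_g$ and $f_jJ(f_1,\dots,\eta,\dots,f_n)$ all vanish a.e.\ off ${\rm spt}\,\eta$.

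\textbf{Metric orientation.} The sign identification genuinely needs a metric orientation of $\mathcal{S}$, as you say. This is not among the conditions listed in Subsection \ref{adjhaka}, so it must be carried as a standing hypothesis.
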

\begin{thm}\label{thm20}
Suppose that $f$ belongs to  $N_{loc}^{1,n}(\mathcal{S},\mathbb{R}^{n})$ and satisfies the following inequality
\begin{equation}
\| {\rm ap}Df(x)\|^{n}\leq K(x)J_{f}(x)+\Sigma(x)
\end{equation}
for almost everywhere $x\in\mathcal{S}.$ If $K\in L_{loc}^{p}(\mathcal{S})$ for some $p>n$ and  $\Sigma\in L_{loc}^{1+\varepsilon}(\mathcal{S})$ for some $\varepsilon>0,$ then $f$ is locally H\"{o}lder continuous.
\end{thm}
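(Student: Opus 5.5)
The plan is to follow the Euclidean argument of Dole\v{z}alov\'{a}, Kangasniemi and Onninen, replacing Euclidean tools by their metric counterparts: the integration by parts formula of Lemma \ref{Kir16}, the Sobolev--Poincar\'{e} inequality (\ref{asddfvddjaadlla}) together with Ahlfors $n$-regularity, and a Morrey-type decay estimate. The goal is a growth bound of the form
\begin{equation*}
\int_{B(x,r)}\| {\rm ap}Df\|^{n}\,d\mathcal{H}^{n}\leq C r^{\alpha}
\end{equation*}
for all small balls centered in a fixed compact set. A Morrey--Campanato argument on the Ahlfors regular space then gives local H\"{o}lder continuity of $f$. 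That last step is the usual one: Poincar\'{e} with exponent $p_{0}<n$, a telescoping sum over dyadic balls, and the fact that Lebesgue points are dense.

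\emph{Step 1 (Caccioppoli-type estimate).} Fix a ball $B=B(x_{0},r)$ with $B(x_{0},2\lambda r)$ inside a compact subset. Take a Lipschitz cutoff $\eta$ equal to $1$ on $B$, supported in $2B$, with ${\rm Lip}\,\eta\lesssim 1/r$, and set $c=f_{2B\setminus B}$ (the mean over the annulus $A$). Apply the pointwise distortion inequality and then Lemma \ref{Kir16} to $f-c$ with test function $\eta^{n}$. Using the multilinear bound $|J(f_{1},\dots,\eta^{n},\dots,f_{n})|\lesssim \eta^{n-1}|{\rm ap}D\eta|\,\|{\rm ap}Df\|^{n-1}$ on the support of $D\eta$, this should give
\begin{equation*}
\int \eta^{n}\frac{\|{\rm ap}Df\|^{n}}{K}\leq \int\eta^{n}J_{f}+\int\eta^{n}\frac{\Sigma}{K}\lesssim \frac{1}{r}\int_{A}|f-c|\,\|{\rm ap}Df\|^{n-1}+\int_{2B}\Sigma .
\end{equation*}
Division by $K$ is harmless because $K\geq1$.

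\emph{Step 2 (hole filling with Sobolev--Poincar\'{e}).} I would bound the annulus term by H\"{o}lder's inequality, pairing $\|{\rm ap}Df\|^{n-1}$ with $K^{(n-1)/n}$ so that only $\|{\rm ap}Df\|^{n}/K$ appears. Then (\ref{asddfvddjaadlla}) with an exponent $q<n$ chosen so that the conjugate exponents close up, using $K\in L^{p}$ with $p>n$, should give
\begin{equation*}
\frac1r\int_{A}|f-c|\,\|{\rm ap}Df\|^{n-1}\lesssim \|K\|_{L^{p}}^{\theta}\Big(\int_{\tilde A}\frac{\|{\rm ap}Df\|^{n}}{K}\Big)^{\beta}
\end{equation*}
with $\beta\geq 1$ after normalizing, up to small enlargements $\tilde A$. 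The Ahlfors regularity removes the explicit $r$ powers, since $\mathcal{H}^{n}(B)\simeq r^{n}$.

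\emph{Step 3 (iteration).} Write $\Phi(r)=\int_{B(x_{0},r)}\|{\rm ap}Df\|^{n}/K$. The source term satisfies $\int_{2B}\Sigma\leq \|\Sigma\|_{L^{1+\varepsilon}}\,r^{n\varepsilon/(1+\varepsilon)}$. Steps 1 and 2 should then yield
\begin{equation*}
\Phi(r)\leq C\big(\Phi(2\lambda r)-\Phi(r)\big)+Cr^{\delta},
\end{equation*}
hence $\Phi(r)\leq \gamma\,\Phi(2\lambda r)+Cr^{\delta}$ with $\gamma<1$. A standard iteration lemma gives $\Phi(r)\lesssim r^{\alpha}$. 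Finally, H\"{o}lder's inequality with $K\in L^{p}$ converts this decay of $\Phi$ into decay of $\int_{B}\|{\rm ap}Df\|^{s}$ for some $s<n$ close to $n$, with the right power of $r$. That is what the Morrey embedding needs with exponent $s$ in place of $n$.

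The main obstacle is Step 2: the exponents must close. The subtle point is getting a genuine hole-filling inequality when $K$ is only in $L^{p}$, $p>n$, rather than bounded. In that case the naive bound is superlinear rather than linear in $\Phi$. If so, I would first try a smallness argument: $\Phi(r)\to0$ as $r\to0$ by absolute continuity of the integral, so on small balls the superlinear term can be absorbed. A second difficulty is justifying Lemma \ref{Kir16} for $f-c$ and the cutoff $\eta^{n}$ in the metric setting. I expect this to follow by applying the lemma to each component, since subtracting constants does not change the Jacobian.
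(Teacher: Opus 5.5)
Your Step 2 is where the argument breaks. It cannot be repaired, because the statement is false as written.

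\textbf{Where Step 2 fails.} The annulus term is never superlinear in $\Phi$. Replacing $f$ by $sf$ multiplies both $\frac1r\int_A|f-c|\,\|{\rm ap}Df\|^{n-1}\,d\mathcal{H}^n$ and $\Phi$ by $s^n$. So any bound by $\Phi^{\beta}$ forces $\beta=1$, and your smallness/absorption fallback is not available. What H\"older's inequality and Sobolev--Poincar\'e actually give is the following (splitting as the paper does, with $|f-c|\in L^{n^2}$ controlled by $\|{\rm ap}Df\|\in L^{n^2/(n+1)}$):
\[
\frac1r\int_A|f-c|\,\|{\rm ap}Df\|^{n-1}\,d\mathcal{H}^n\lesssim\Big(\frac{1}{\mathcal{H}^n(2\lambda B)}\int_{2\lambda B}K^{n}\,d\mathcal{H}^n\Big)^{1/n}\int_{\tilde A}\frac{\|{\rm ap}Df\|^n}{K}\,d\mathcal{H}^n .
\]
Ahlfors regularity does not remove the powers of $r$. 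It turns $r^{-1}\|K\|_{L^n(2\lambda B)}$ into this scale-invariant average. For $p>n$ you may pass to the $L^p$-average, i.e.\ $r^{-n/p}\|K\|_{L^p(2\lambda B)}$, but the factor $r^{-n/p}$ cannot be dropped. So your hole-filling constant $C_r$ is an average of $K$ over $B(x_0,2\lambda r)$, and for $K\in L^p_{loc}$ it need not stay bounded as $r\to0$. Then $\gamma_r=C_r/(1+C_r)\to1$, and the iteration gives at best logarithmic decay of $\Phi$, not $r^{\alpha}$.

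\textbf{Counterexample.} Take $\mathcal{S}=\mathbb{R}^n$, which has controlled geometry. Let $f(x)=\frac{x}{|x|}\big(\log\frac{e}{|x|}\big)^{-1}$ for $0<|x|\le1$, $f(0)=0$, and $f(x)=x$ for $|x|\ge1$. Then:
\begin{itemize}
\item $f$ is continuous and $C^1$ off the origin, with $\|Df(x)\|=\big(|x|\log\frac{e}{|x|}\big)^{-1}\in L^n_{loc}$.
\item $\|Df\|^n=KJ_f$ with $K(x)=\log\frac{e}{|x|}$ on the unit ball and $K=1$ elsewhere. So $K\in L^p_{loc}$ for every $p<\infty$, and one may take $\Sigma\equiv0$.
\item $|f(x)-f(0)|=\big(\log\frac{e}{|x|}\big)^{-1}$, so $f$ is not H\"older continuous at $0$.
\end{itemize}
Here $\Phi(r)\simeq\log^{-n}\frac{e}{r}$ and $C_r\simeq\log\frac{e}{r}$, which is exactly the behaviour described above.

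\textbf{The paper's proof.} The paper takes a different route: a reverse H\"older inequality for $\|{\rm ap}Df\|^n/K$, then Gehring's lemma, then H\"older with $K\in L^p$ to get $\|{\rm ap}Df\|\in L^{\gamma}_{loc}$ for some $\gamma>n$, then the Morrey embedding. It hits the same obstruction. The display (\ref{e2wegystjscscdushydicokpl}) drops a factor $1/r$. Once it is restored, the constant $C(n,\lambda,\mathcal{K})$ in (\ref{e688888888fghjk}) becomes $C\big(\frac{1}{\mathcal{H}^n(2\lambda B)}\int_{2\lambda B}K^p\,d\mathcal{H}^n\big)^{1/p}$. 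This is not uniform in $B$, so Gehring's lemma does not apply. Indeed, in the example $\|Df\|^n/K=J_f\notin L^{\beta}_{loc}$ for every $\beta>1$, and $\|Df\|\notin L^{\gamma}_{loc}$ for every $\gamma>n$.

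Both your argument and the paper's do go through when $K\in L^{\infty}_{loc}$; yours additionally needs a Poincar\'e inequality on annuli. Bounded $K$ is the case actually needed for Theorem \ref{00990099199}.
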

\begin{proof}
By considering the mapping $g:=(f_{1}-(f_{1})_{2B},f_{2},\cdots,f_{n}),$ we get according to Lemma \ref{Kir16} that 
\begin{equation}\label{e2wegystjst}
\begin{aligned}
&\abs{ \int_{2B }\eta J_{f}d\mathcal{H}^{n}}\leq \frac{c(n)}{r}\int_{ 2B}\abs{f_{1}-(f_{1})_{2B}}\left(\frac{\| {\rm ap}Df(x)\|^{n}}{K}\right)^{\frac{n-1}{n}}K^{\frac{n-1}{n}}d\mathcal{H}^{n}\\
\leq&\frac{c(n)}{r} \left(\int_{ 2B}\abs{f_{1}-(f_{1})_{2B}}^{n^{2}}d\mathcal{H}^{n}\right)^{\frac{1}{n^{2}}}\left(\int_{ 2B}K^{p}d\mathcal{H}^{n}\right)^{\frac{n-1}{np}}
\\
&\;\;\;\;\;\;\;\;\;\;\;\;\;\;\;\;\;\;\;\;\;\;\;\times\left(\int_{ 2B}\left(\frac{\| {\rm ap}Df(x)\|^{n}}{K}\right)^{\frac{np}{(p-1)n+p}}d\mathcal{H}^{n}\right)^{\frac{(n-1)[(p-1)n+p]}{n^{2}p}}\\
\leq& \frac{c(n)}{r}\left(\int_{ 2\lambda B}\| {\rm ap}Df(x)\|^{\frac{n^{2}}{n+1}}d\mathcal{H}^{n}\right)^{\frac{n+1}{n^{2}}}\left(\int_{ 2B}K^{p}d\mathcal{H}^{n}\right)^{\frac{n-1}{np}}
\\
&\;\;\;\;\;\;\;\;\;\;\;\;\;\;\;\;\;\;\;\;\;\;\;\times\left(\int_{ 2B}\left(\frac{\| {\rm ap}Df(x)\|^{n}}{K}\right)^{\frac{np}{(p-1)n+p}}d\mathcal{H}^{n}\right)^{\frac{(n-1)[(p-1)n+p]}{n^{2}p}}\\
\leq& \frac{c(n)}{r}\left(\int_{ 2\lambda B}\left(\frac{\| {\rm ap}Df(x)\|^{n}}{K}\right)^{\frac{np}{(p-1)n+p}}d\mathcal{H}^{n}\right)^{\frac{(p-1)n+p}{n^{2}p}}\left(\int_{ 2\lambda B}K^{p}d\mathcal{H}^{n}\right)^{\frac{1}{np}}
\\
\times&\left(\int_{ 2B}\left(\frac{\| {\rm ap}Df(x)\|^{n}}{K}\right)^{\frac{np}{(p-1)n+p}}d\mathcal{H}^{n}\right)^{\frac{(n-1)[(p-1)n+p]}{n^{2}p}}\left(\int_{ 2B}K^{p}d\mathcal{H}^{n}\right)^{\frac{n-1}{np}}\\
\leq& \frac{c(n)}{r}\left(\int_{ 2\lambda B}\left(\frac{\| {\rm ap}Df(x)\|^{n}}{K}\right)^{\frac{np}{(p-1)n+p}}d\mathcal{H}^{n}\right)^{\frac{(p-1)n+p}{np}}\left(\int_{ 2\lambda B}K^{p}d\mathcal{H}^{n}\right)^{\frac{1}{p}}.
\end{aligned}
\end{equation}
Hence, we get
\begin{equation}\label{e2wegystjscscdushydicokpl}
\begin{aligned}
&\int_{B}\frac{\| {\rm ap}Df(x)\|^{n}}{K}d\mathcal{H}^{n}\leq\int_{ 2B}\eta\frac{\| {\rm ap}Df(x)\|^{n}}{K}d\mathcal{H}^{n}\leq \int_{2B }\eta J_{f}d\mathcal{H}^{n}+\int_{2B }\eta \frac{\Sigma}{K} d\mathcal{H}^{n}\\
\leq& \frac{c(n)}{r}\left(\int_{ 2\lambda B}\left(\frac{\| {\rm ap}Df(x)\|^{n}}{K}\right)^{\frac{np}{(p-1)n+p}}d\mathcal{H}^{n}\right)^{\frac{(p-1)n+p}{np}}\left(\int_{ 2\lambda B}K^{p}d\mathcal{H}^{n}\right)^{\frac{1}{p}}+\int_{2B }\eta \Sigma d\mathcal{H}^{n}\\
\leq& c(n)\left(\int_{ 2\lambda B}\left(\frac{\| {\rm ap}Df(x)\|^{n}}{K}\right)^{\frac{np}{(p-1)n+p}}d\mathcal{H}^{n}\right)^{\frac{(p-1)n+p}{np}}\left(\int_{ 2\lambda B}K^{p}d\mathcal{H}^{n}\right)^{\frac{1}{p}}+\int_{2B }\eta \Sigma d\mathcal{H}^{n},
\end{aligned}
\end{equation}
which implies that 
\begin{equation}\label{e688888888fghjk}
\begin{aligned}
\int_{ B}\!\!\!\!\!\!\!\!\!\!\; {}-{} \,\,\,\, \frac{\| {\rm ap}Df(x)\|^{n}}{K}d\mathcal{H}^{n}\leq& C(n,\lambda,\mathcal{K})\cdot\left(\int_{ 2\lambda B}\!\!\!\!\!\!\!\!\!\!\!\!\!\!\!\; {}-{} \,\,\,\left(\frac{\| {\rm ap}Df(x)\|^{n}}{K}\right)^{\frac{np}{(p-1)n+p}}d\mathcal{H}^{n}\right)^{\frac{(p-1)n+p}{np}}\\
&\,\,\,\,\,\,\,\,\,\,\,\,\,\,\,\,\,\,\,\,\,\,\,\,\,\,\,\,\,\,\,\,\,\,\,\,\,\,\,\,\,\,\,
+C(n,\lambda)\cdot \int_{ 2\lambda B}\!\!\!\!\!\!\!\!\!\!\!\!\!\!\!\!\; {}-{} \,\,\,\;\;\Sigma \;d\mathcal{H}^{n}.\\
\end{aligned}
\end{equation}
If we donate $q=[(p-1)n+p]/np, r_{0}=(1+\varepsilon)q>q, g=(\| {\rm ap}Df\|^{n}/K)^{1/q}, f=\Sigma^{1/q},$ then the inequality (\ref{e688888888fghjk}) gives us that
$$\int_{ B}\!\!\!\!\!\!\!\!\!\!\!\; {}-{} \,\,\, g^{q}\leq C(n,\lambda,\mathcal{K})\cdot\left[\left(\int_{2\lambda B}\!\!\!\!\!\!\!\!\!\!\!\!\!\!\!\!\; {}-{} \,\,\,\,\,\,g\right)^{q}
+ \int_{ 2\lambda B}\!\!\!\!\!\!\!\!\!\!\!\!\!\!\!\!\; {}-{} \,\,\,\,\;f^{q}\right].$$
Since $p>n,$ we see that $q>1.$ As $f^{r_{0}}=\Sigma^{1+\varepsilon}\in L_{loc}^{1}(\Omega),$ then by a Gehring's lemma on metric measure space \cite[Theorem 3.3]{Gol05}, we see that   there exists $\beta_{1}>1,$ such that $\| {\rm ap}Df\|^{n}/K\in L_{loc}^{\beta_{1}}(\Omega).$ Without loss of generality, we assume that $\| {\rm ap}Df\|^{n}/K\in L^{\beta_{1}}(\Omega).$ We chose the constants $p_{1},q_{1}>0$ such that 
$p=\frac{\beta_{1}p_{1}}{q_{1}}>n-1,1<q_{1}<\beta_{1}$ and $1/p_{1}+1/q_{1}=1.$ Those constants do exist. It is easy to see that they meet the inequalities 
$$p_{1}>\max\{\frac{\beta_{1}+n-1}{\beta_{1}},\frac{\beta_{1}}{\beta_{1}-1}\}\,\,\,{\rm and }\,\,\,1<q_{1}<\beta_{1}.$$
Then, for every compact set $\Omega_{1}\subset\Omega,$
we get by using of H\"{o}lder inequality that
$$\int_{\Omega_{1}}(\| {\rm ap}Df\|^{n})^{\frac{\beta_{1}}{q_{1}}}d\mathcal{H}^{n}
\leq\left(\int_{\Omega_{1}}\left(\frac{\| {\rm ap}Df\|^{n}}{K}\right)^{\beta_{1}}d\mathcal{H}^{n}\right)^{\frac{1}{q_{1}}}\left(\int_{\Omega_{1}}K^{p}d\mathcal{H}^{n}\right)^{\frac{1}{p_{1}}}<\infty.$$
This shows that $f\in N_{loc}^{1,\gamma}(\mathcal{S},\mathbb{R}^{n})$ with $\gamma=n\beta_{1}/q_{1}>n.$ By \cite[Theorem 5.1]{HK00}, we deduce that $f$ is locally H\"{o}lder continuous.
This finishes the proof.
\end{proof}

\subsection{Corollary I: H\"{o}lder regularity for values of finite distortion}
According to  Theorem \ref{thm20}, we have the following corollary on H\"{o}lder regularity for values of finite distortion.
\begin{prop}\label{dahdujkj}
Suppose that $f\in N_{loc}^{1,n}(\mathcal{S},\mathbb{R}^{n})$  satisfies the following inequality
\begin{equation}
\| {\rm ap}Df(x)\|^{n}\leq K(x)J_{f}(x)+\Sigma(x)\abs{f(x)-y_{0}}^{n}
\end{equation}
for almost everywhere $x\in\mathcal{S}$ with $y_{0}\in\mathbb{R}^{n}.$  If $K\in L_{loc}^{p}(\mathcal{S})$ for some $p>n$ and  $\Sigma\in L_{loc}^{1+\varepsilon}(\mathcal{S})$ for some $\varepsilon>0,$ then $f$ is locally H\"{o}lder continuous.
\end{prop}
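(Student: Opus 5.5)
The plan is to reduce Proposition \ref{dahdujkj} to Theorem \ref{thm20} by turning the term $\Sigma\abs{f-y_{0}}^{n}$ into an $L^{1+\varepsilon'}_{loc}$ function $\tilde\Sigma$. Then $f$ satisfies $\|{\rm ap}Df\|^{n}\leq KJ_{f}+\tilde\Sigma$ with $K\in L^{p}_{loc}$, $p>n$, and Theorem \ref{thm20} applies. Since Hölder continuity is local, I fix a relatively compact ball $B_{0}$ with $2\lambda B_{0}$ compactly contained in $\mathcal{S}$ and work on it. I also replace $f$ by $f-y_{0}$, which changes neither ${\rm ap}Df$ nor $J_{f}$, so I may assume $y_{0}=0$.

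The key step is to show $\abs{f}^{n}\in L^{s}_{loc}$ for a large enough $s$. Since $f\in N^{1,n}_{loc}$, the Sobolev--Poincaré inequality (\ref{asddfvddjaadlla}) gives a Sobolev embedding for each exponent $t<n$, applied componentwise. Using $\abs{apDf}\in L^{n}\subset L^{t}$ on $2\lambda B_{0}$, this puts $f_{j}-(f_{j})_{B}$ in $L^{nt/(n-t)}(B)$. Letting $t\uparrow n$, we get $f\in L^{s'}_{loc}$ for every $s'<\infty$. The averages are finite because $f\in L^{n}_{loc}$, and the Ahlfors regularity of $\mathcal{H}^{n}$ lets us cover compact sets by finitely many balls.

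Next, choose $\varepsilon'\in(0,\varepsilon)$ and apply Hölder's inequality with exponents $\frac{1+\varepsilon}{1+\varepsilon'}$ and its conjugate $r$:
$$\int_{B_{0}}(\Sigma\abs{f}^{n})^{1+\varepsilon'}d\mathcal{H}^{n}\leq\left(\int_{B_{0}}\Sigma^{1+\varepsilon}d\mathcal{H}^{n}\right)^{\frac{1+\varepsilon'}{1+\varepsilon}}\left(\int_{B_{0}}\abs{f}^{n(1+\varepsilon')r}d\mathcal{H}^{n}\right)^{\frac{1}{r}}.$$
The right-hand side is finite because $f\in L^{s'}_{loc}$ for all finite $s'$. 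Hence $\tilde\Sigma:=\Sigma\abs{f-y_{0}}^{n}\in L^{1+\varepsilon'}_{loc}(\mathcal{S})$. With this, $f$ satisfies exactly the hypotheses of Theorem \ref{thm20}, and local Hölder continuity follows.

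The main obstacle is the integrability step: justifying the embedding $N^{1,n}_{loc}\subset L^{s'}_{loc}$ for all $s'<\infty$ on $\mathcal{S}$. The borderline case $p=n$ is excluded in (\ref{asddfvddjaadlla}), so I would go through exponents $t<n$ as above; alternatively, a Trudinger-type exponential integrability from the $n$-Poincaré inequality would also work. The remaining care is purely local bookkeeping: the dilated balls $2\lambda B$ must stay inside compact subsets, and Theorem \ref{thm20} must be applied on a domain rather than on all of $\mathcal{S}$. Its proof is local, so this causes no difficulty.
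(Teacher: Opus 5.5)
Your proposal is correct and follows essentially the same route as the paper. The paper also fixes an exponent $1<Q<1+\varepsilon$ and uses H\"older's inequality to show $\Sigma\abs{f-y_{0}}^{n}\in L^{Q}_{loc}$, using that $f\in L^{m}_{loc}$ for every finite $m$, and then invokes Theorem \ref{thm20}. The only difference is cosmetic: the paper obtains the high integrability of $f$ from the Trudinger inequality for Newtonian functions, whereas you get it from the Sobolev--Poincar\'e inequality with exponents $t\uparrow n$ (noting Trudinger as an alternative), which works equally well.
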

\begin{proof}
We only need to prove that there exists a constant $Q\in(1,\infty),$ such that  $\Sigma(x)\abs{f(x)-y_{0}}^{n}\in L_{loc}^{Q}(\mathcal{S}).$ Without loss of generality, we may assume that ${\rm diam}\mathcal{S}<\infty,$ $f\in N^{1,n}(\mathcal{S},\mathbb{R}^{n}),$ $K\in L^{p}(\mathcal{S})$ and  $\Sigma\in L^{1+\varepsilon}(\mathcal{S})$ for some $\varepsilon>0.$ Then, we take a constant $Q$ which satisfying $1<Q<1+\varepsilon.$
Thus, for every compact subset $M\subset \mathcal{S},$ we have by H\"{o}lder inequality
\begin{equation}
\begin{aligned}
\int_{M}\left(\Sigma\abs{f-y_{0}}^{n}\right)^{Q}d\mathcal{H}^{n}=&\int_{M}(\Sigma^{1+\varepsilon})^{\frac{Q}{1+\varepsilon}}\abs{f-y_{0}}^{nQ}d\mathcal{H}^{n}\\
\aleq&\left(\int_{M}\Sigma^{1+\varepsilon}d\mathcal{H}^{n}\right)^{\frac{Q}{1+\varepsilon}}\left(\int_{M}\abs{f-y_{0}}^{m}d\mathcal{H}^{n}\right)^{\frac{1+\varepsilon-Q}{1+\varepsilon}}\\
\aleq&\|\Sigma\|_{L^{1+\varepsilon}(\mathcal{S})}^{Q}\left(\int_{M}\abs{f-y_{0}}^{m}d\mathcal{H}^{n}\right)^{\frac{1+\varepsilon-Q}{1+\varepsilon}},
\end{aligned}
\end{equation}
where $m=nQ(1+\varepsilon)/(1+\varepsilon-Q)>n.$ If we let $f=(f_{1},f_{2},\ldots,f_{n}),$ then we have that $f_{j}\in N^{1,n}(\mathcal{S})$ for all $j\in\{1,2,\ldots,n\}.$ Hence, by using of the Truinger inequality
\cite[Theorem 6.1]{HK00} for Newtonian-Sobolev mappings we get that $f_{j}\in L_{loc}^{l}(\mathcal{S})$ for all $j\in\{1,2,\ldots,n\}$ and all $l\geq1.$ In addition, since
$$
\abs{f-y_{0}}^{m}\aleq\abs{f}^{m}+\abs{y_{0}}^{m}\aleq \abs{y_{0}}^{m}+\sum_{i=1}^{n}f_{i}^{m},
$$
we get 
\begin{equation}
\begin{aligned}
\left(\int_{M}\abs{f-y_{0}}^{m}d\mathcal{H}^{n}\right)^{\frac{1+\varepsilon-Q}{1+\varepsilon}}\aleq&\left(\abs{y_{0}}^{m}\mathcal{H}^{n}(M)+
\sum_{i=1}^{n}\int_{M}f_{i}^{m}d\mathcal{H}^{n}\right)^{\frac{1+\varepsilon-Q}{1+\varepsilon}}\\
\aleq&C(y_{0},{\rm diam}\mathcal{S},\varepsilon,Q,\|f\|_{L^{m}(\mathcal{S})})<\infty.
\end{aligned}
\end{equation}
Therefore, according to Theorem \ref{thm20}, we see that  $f$ is locally H\"{o}lder continuous. This completes the proof of Proposition \ref{dahdujkj}.
\end{proof}

\subsection{Corollary II:  Lusin's condition (N) for values of finite distortion}
 \subsubsection{Lusin's condition (N) for values of finite distortion}
A continuous mapping $f:(X,\mu_{1})\rightarrow(Y,\mu_{2})$ is said to satisfy Lusin's condition (N) if  for each $E\subset X$ with $\mu_{1}(E)=0,$ we have $\mu_{2}(f(E))=0.$ 
Suppose that $f\in N^{1,p}(\Omega,\mathbb{R}^{n})\cap \mathcal{C}(\Omega,\mathbb{R}^{n}),$ where $\Omega\subset\mathcal{S}$ is a domain. In the case that $p>n,$ then by \cite[Theorem 6.1]{HKST01} and \cite[Theorem 1]{Zap14}, we see that $f$ satisfies Lusin's condition (N). In the case that $p<n,$ the mapping $f$ fails to satisfy Lusin's condition (N); see for example in \cite{Pon71}. In the case that 
$p=n,$ there are lots of conditions posed on $f$ which ensure that it satisfies Lusin's condition (N). For example, Zapadinskaya \cite[Theorem 1]{Zap14} showed  that if $f$ is H\"{o}lder continuous, then it satisfies Lusin's condition (N). Therefore, together with Theorem \ref{thm20} and Proposition \ref{dahdujkj}, these yield the following results on Lusin's condition (N).
\begin{prop}\label{porhdjxja}
Suppose that $f$ belongs to  $N_{loc}^{1,n}(\mathcal{S},\mathbb{R}^{n})$ and satisfies the following inequality
\begin{equation}
\| {\rm ap}Df(x)\|^{n}\leq K(x)J_{f}(x)+\Sigma(x)
\end{equation}
for almost everywhere $x\in\mathcal{S}.$ If $K\in L_{loc}^{p}(\mathcal{S})$ for some $p>n$ and  $\Sigma\in L_{loc}^{1+\varepsilon}(\mathcal{S})$ for some $\varepsilon>0,$ then $f$ satisfies locally Lusin's 
condition (N).
\end{prop}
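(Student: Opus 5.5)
The plan is to reduce Proposition \ref{porhdjxja} to two earlier ingredients: the local H\"{o}lder continuity from Theorem \ref{thm20}, and Zapadinskaya's result \cite[Theorem 1]{Zap14}. That result says a H\"{o}lder continuous mapping in $N^{1,n}$ on a domain of an Ahlfors $n$-regular space satisfies Lusin's condition (N).

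\textbf{Step 1: H\"{o}lder continuity.} First fix an arbitrary point $x\in\mathcal{S}$. By Theorem \ref{thm20} the hypotheses $K\in L_{loc}^{p}$, $p>n$, and $\Sigma\in L_{loc}^{1+\varepsilon}$ give that $f$ is locally H\"{o}lder continuous. So there is a relatively compact domain $D\ni x$ with $\overline{D}\subset\mathcal{S}$ on which $f$ is H\"{o}lder continuous with some exponent $\alpha\in(0,1]$. Since $f\in N_{loc}^{1,n}(\mathcal{S},\mathbb{R}^{n})$ and $\overline{D}$ is compact, we also have $f|_{D}\in N^{1,n}(D,\mathbb{R}^{n})\cap\mathcal{C}(D,\mathbb{R}^{n})$.

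\textbf{Step 2: Lusin (N) on $D$.} Next apply \cite[Theorem 1]{Zap14} to $f|_{D}$. This uses that $\mathcal{S}$ is Ahlfors $n$-regular and supports a weak $1$-Poincar\'{e} inequality, by Semmes' theorem recalled in subsection \ref{adjhaka}. The conclusion is that $\mathcal{H}^{n}(E)=0$ with $E\subset D$ implies $\mathcal{L}^{n}(f(E))=0$.

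The main thing to check is that the hypotheses of \cite{Zap14} match our setting: the underlying measure must be $\mathcal{H}^{n}$ with $n$-regularity, and the Sobolev exponent must equal the dimension. If the cited theorem instead requires the slightly stronger integrability $f\in N^{1,\gamma}$ with $\gamma>n$, this is available from the proof of Theorem \ref{thm20}. There Gehring's lemma gave $\|{\rm ap}Df\|\in L_{loc}^{\gamma}$ with $\gamma=n\beta_{1}/q_{1}>n$. In that case one can instead invoke \cite[Theorem 6.1]{HKST01} for $p>n$, as noted before the statement.

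\textbf{Step 3: conclusion.} Since the point $x$ was arbitrary, every point of $\mathcal{S}$ has a neighbourhood on which $f$ satisfies condition (N). This is exactly the local Lusin condition (N). Moreover, $\mathcal{S}$ is separable, so any null set $E$ is covered by countably many such neighbourhoods $D_{i}$. Countable subadditivity of $\mathcal{L}^{n}$ then gives $\mathcal{L}^{n}(f(E\cap\Omega'))=0$ on any open set $\Omega'$ where the hypotheses hold, and in fact on all of $\mathcal{S}$.
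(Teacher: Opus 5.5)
Your proposal is correct and matches the paper's argument: the paper obtains Proposition \ref{porhdjxja} by combining the local H\"{o}lder continuity of Theorem \ref{thm20} with Zapadinskaya's result \cite[Theorem 1]{Zap14}, and it also mentions your fallback, namely $f\in N_{loc}^{1,\gamma}$ with $\gamma>n$ (from the Gehring step) together with \cite[Theorem 6.1]{HKST01}. Your Lindel\"{o}f/countable-subadditivity remark in Step 3 just spells out the passage from local to global condition (N), which the paper leaves implicit.
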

\begin{prop}\label{dahdujkjdfvs} 
Suppose that $f\in N_{loc}^{1,n}(\mathcal{S},\mathbb{R}^{n})$  satisfies the following inequality
\begin{equation}
\| {\rm ap}Df(x)\|^{n}\leq K(x)J_{f}(x)+\Sigma(x)\abs{f(x)-y_{0}}^{n}
\end{equation}
for almost everywhere $x\in\mathcal{S}$ with $y_{0}\in\mathbb{R}^{n}.$ If $K\in L_{loc}^{p}(\mathcal{S})$ for some $p>n$ and  $\Sigma\in L_{loc}^{1+\varepsilon}(\mathcal{S})$ for some $\varepsilon>0,$ then $f$ satisfies locally Lusin's 
condition (N).
\end{prop}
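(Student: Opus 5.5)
The plan is to reduce the statement to Proposition \ref{porhdjxja}, in the same way Proposition \ref{dahdujkj} was reduced to Theorem \ref{thm20}. The idea is to absorb the term $\Sigma(x)\abs{f(x)-y_{0}}^{n}$ into a new function $\widetilde{\Sigma}$ and check that $\widetilde{\Sigma}\in L_{loc}^{1+\varepsilon'}(\mathcal{S})$ for some $\varepsilon'>0$. Setting
$$\widetilde{\Sigma}(x):=\Sigma(x)\abs{f(x)-y_{0}}^{n},$$
the hypothesis becomes $\| {\rm ap}Df(x)\|^{n}\leq K(x)J_{f}(x)+\widetilde{\Sigma}(x)$ almost everywhere, with the same $K\in L_{loc}^{p}$, $p>n$. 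Proposition \ref{porhdjxja} applied with $\widetilde{\Sigma}$ in place of $\Sigma$ then gives local Lusin's condition (N) directly.

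The integrability of $\widetilde{\Sigma}$ follows the computation already carried out in the proof of Proposition \ref{dahdujkj}. Fix a compact set $M\subset\mathcal{S}$ and choose $1<Q<1+\varepsilon$. By H\"{o}lder's inequality with exponents $(1+\varepsilon)/Q$ and its conjugate,
$$\int_{M}\widetilde{\Sigma}^{Q}\,d\mathcal{H}^{n}\leq\Big(\int_{M}\Sigma^{1+\varepsilon}d\mathcal{H}^{n}\Big)^{\frac{Q}{1+\varepsilon}}\Big(\int_{M}\abs{f-y_{0}}^{m}d\mathcal{H}^{n}\Big)^{\frac{1+\varepsilon-Q}{1+\varepsilon}},$$
where $m=nQ(1+\varepsilon)/(1+\varepsilon-Q)$. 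The first factor is finite by assumption. For the second factor, each component $f_{j}$ lies in $N^{1,n}$ on a neighbourhood of $M$, so the Trudinger inequality \cite[Theorem 6.1]{HK00} gives $f_{j}\in L^{l}_{loc}$ for every $l\geq1$. Together with $\abs{f-y_{0}}^{m}\aleq\abs{y_{0}}^{m}+\sum_{j}\abs{f_{j}}^{m}$ and $\mathcal{H}^{n}(M)<\infty$ (by Ahlfors regularity), this second factor is finite as well. Hence $\widetilde{\Sigma}\in L_{loc}^{Q}(\mathcal{S})$, so we may take $\varepsilon'=Q-1>0$.

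The only delicate point is localization. The Trudinger inequality needs $f\in N^{1,n}$ on a ball, or on a finite union of balls covering $M$, where the Poincar\'{e} inequality (\ref{asddfvddjaadlla}) holds. I will cover $M$ by finitely many small balls $B_{i}$ whose dilates $\lambda B_{i}$ stay in a compact set where $f\in N^{1,n}$, and apply the inequality ball by ball. Alternatively, one can bypass this step: Proposition \ref{dahdujkj} already shows that $f$ is locally H\"{o}lder continuous, so it is locally bounded, and then $\widetilde{\Sigma}\leq C_{M}\Sigma$ on $M$ gives $\widetilde{\Sigma}\in L^{1+\varepsilon}(M)$ immediately. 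The same continuity, combined with Zapadinskaya's theorem \cite[Theorem 1]{Zap14} that H\"{o}lder continuous $N^{1,n}$ maps satisfy condition (N), also gives the conclusion directly. I expect no real obstacle beyond this bookkeeping.
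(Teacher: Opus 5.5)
Your proposal is correct and is essentially the paper's argument. The paper obtains this proposition by combining Proposition \ref{dahdujkj} with Zapadinskaya's theorem \cite[Theorem 1]{Zap14}, which is exactly your stated alternative; Proposition \ref{dahdujkj} gives local H\"{o}lder continuity, and its proof is precisely your $L^{Q}$ estimate for $\Sigma\abs{f-y_{0}}^{n}$ followed by Theorem \ref{thm20}. Your primary route through Proposition \ref{porhdjxja} uses the same two ingredients (Theorem \ref{thm20} and \cite{Zap14}) in a different order, so there is no substantive difference.
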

Here we provide an alternative proof of Propositions \ref{porhdjxja} or \ref{dahdujkjdfvs}, which may be of independent interest. We address this in the subsection \ref{ajdbaxb}.
\subsubsection{Pseudomonotone mappings}\label{ajdbaxb}
We first recall the definition of  a mapping to be $K$-pseudomonotone in Euclidean space. Suppose that $ \Omega\subset\mathbb{R}^{n}$ is a domain. In \cite{MM95},  Mal\'{y} and Martio \cite{MM95} called a mapping $f:\Omega\rightarrow \mathbb{R}^{n}$ 
is $K$-pseudomonotone, $K\geq1,$ if
\begin{equation}\label{hgvbnamn}
{\rm diam}(fB(x,r))\leq K{\rm diam}(f\partial B(x,r))
\end{equation}
whenever $B(x,r)$ is compactly contained on $\Omega.$ Then, they showed that a continuous mapping $f\in W^{1,n}(\Omega,\mathbb{R}^{n})$ satisfies Lusin's condition (N) if it is $K$-pseudomonotone.
This result was extended to the setting of metric measure spaces by Heinonen et al. \cite[Theorem 7.2]{HKST01}. In addition,  by a result of Zapadinskaya \cite[Theorem 1]{Zap14} in the setting of metric space, we see that a H\"{o}lder mapping $f:X\rightarrow \mathbb{R}^{n}$ also satisfies Lusin's condition (N). Actually, the condition of H\"{o}lder continuity gives us that 
\begin{equation}\label{xabkkauia}
{\rm diam}(fB(x,r))\leq Kr^{\alpha},0<\alpha<1,
\end{equation}
 whenever $B(x,r)$ is compactly contained on $\Omega.$ If we combing the conditions (\ref{hgvbnamn}) with (\ref{xabkkauia}) in the setting of Euclidean space,   one can also deduce the Lusin's condition (N)
 for $f:\Omega\rightarrow \mathbb{R}^{n}.$ Indeed, this is a result due to Putten \cite[Theorem 3.1]{Put03}, which asserts that if $f\in W^{1,n}(\Omega,\mathbb{R}^{n})\cap \mathcal{C}(\Omega,\mathbb{R}^{n})$  satisfies the following inequality 
 \begin{equation}\label{hahdjad}
 {\rm diam}(fB(x,r))\leq K{\rm diam}(f\partial B(x,r))+K'r^{\alpha}
 \end{equation} 
for every $B(x,r)\subset\subset\Omega.$ Here, $K,K'$ are positive constants depend only on $n$ and $\Omega$ and $0<\alpha\leq1.$ Then $f$ satisfies the Lusin's condition (N).

We next provide a version of this result in the setting of metric measure spaces. We recall that a function $P:[0,\infty]\rightarrow[0,\infty]$ is to be a Young function if it is convex, non-constant in $(0,\infty)$ with $P(0)=0.$ Then, by compared with (\ref{hahdjad}), we introduce the following concept.
\begin{defn}\label{019001sfgvbsdvfvg b}
 Suppose that $\Omega$ is a domain of $\mathcal{S}$ and $P$ is a Young function. Then, we say a mapping $\phi:\Omega\rightarrow \mathbb{R}^{n}$ is $(K,P)$-pseudomonotone on $\Omega$ if for each $x\in\Omega,$ we have the following inequality
\begin{equation}\label{jhxjhjca}
{\rm diam}(fB(x,r))\leq K{\rm diam}(f\partial B(x,r))+K'\sqrt[n]{P(r)}
\end{equation}
 whenever $B(x,r)$ is compactly contained on $\Omega.$ Here, $K$ and $K'$ are two positive constants depend only on $n$ and $\Omega.$
  \end{defn}
The main result of this subsection is the following theorem.
\begin{thm}\label{axkjlax}
  Suppose that a continuous mapping $f\in N^{1,n}(\Omega,\mathbb{R}^{n})$  is $(K,P)$-pseudomonotone on domain  $\Omega\subset\mathcal{S}$ with $P$ satisfying that $P(2t)\aleq P(t)$ as $t\rightarrow0^{+}.$ Then $f$ satisfies the Lusin's condition (N).
\end{thm}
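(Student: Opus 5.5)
Given $E\subset\Omega$ with $\mathcal{H}^{n}(E)=0$, we show $\mathcal{H}^{n}(f(E))=0$ by a covering argument in the spirit of Malý--Martio and van der Putten, adapted to the metric setting as in \cite[Theorem 7.2]{HKST01}. Since $\Omega$ is a countable union of compact sets, we may assume $E$ is contained in a compact $D\subset\Omega$. Fix $\delta>0$ and an open set $U$ with $E\subset U\Subset\Omega$, $\mathcal{H}^{n}(U)<\delta$, and $\int_{U}\|{\rm ap}Df\|^{n}\,d\mathcal{H}^{n}$ small; the latter uses absolute continuity of the integral of the upper gradient $g\in L^{n}$. For each $x\in E$ we take small balls $B(x,r)$ with $B(x,2\lambda r)\subset U$. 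By the $5r$-covering lemma on the doubling space $\mathcal{S}$ (Ahlfors $n$-regularity), we extract a countable disjoint subfamily $\{B_{i}=B(x_{i},r_{i})\}$ such that $\{5B_{i}\}$ covers $E$. Then $\mathcal{H}^{n}(f(E))\le \sum_{i}C(n)\,{\rm diam}(f(5B_{i}))^{n}$.

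The core estimate bounds ${\rm diam} f(B(x,s))$ for $s=5r_i$. By $(K,P)$-pseudomonotonicity (\ref{jhxjhjca}),
\[
{\rm diam}(fB(x,s))\le K\,{\rm diam}(f\partial B(x,s))+K'P(s)^{1/n}.
\]
The boundary term is controlled by an oscillation-on-spheres estimate. Since $\mathcal{S}$ supports a $p$-Poincaré inequality for $p<n$ (Semmes), the standard Morrey/telescoping argument for continuous Newtonian functions gives ${\rm osc}$ bounds. The cleanest version in the metric setting is the one used in \cite[Section 7]{HKST01}: for a.e.\ radius $t\in(s,2s)$,
\[
{\rm diam}\, f(\partial B(x,t))^{n}\le C\, \int_{B(x,C s)} g^{n}\,d\mathcal{H}^{n},
\]
obtained by choosing $t$ via Fubini-type (coarea/modulus) estimates and the Sobolev embedding on spheres replaced by the Poincaré chain argument with $p<n$ close to $n$. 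Thus we choose radii $t_{i}\in(5r_i,10r_i)$ with this property and work with the balls $B(x_i,t_i)$ instead. For the second term, the doubling assumption $P(2t)\lesssim P(t)$ as $t\to0^{+}$ gives $P(t_i)\lesssim P(r_i)$. Convexity together with $P(0)=0$ then gives $P(r_i)\le r_i^{n}\cdot\bigl(P(r_i)/r_i^{n}\bigr)$, and we need $P(r)\lesssim r^{n}\simeq \mathcal{H}^{n}(B(x,r))$ for small $r$. This is the delicate point. I would first try to show that the doubling condition forces at most polynomial behavior, $P(r)\lesssim r^{\alpha'}$, and compare with $\mathcal{H}^{n}(B_i)$. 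If that fails, I would impose the interpretation that the hypothesis yields $P(r_i)\lesssim\mathcal{H}^{n}(B_i)$, as in van der Putten's H\"older case with $\alpha n\ge n$ handled there.

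Summing, and using that the enlarged balls $B(x_i,C r_i)$ have bounded overlap after a further Vitali-type selection (or using disjointness of $B_i$ with a harmless enlargement constant), we get
\[
\mathcal{H}^{n}(f(E))\lesssim \int_{U} g^{n}\,d\mathcal{H}^{n}+\sum_i P(r_i)\lesssim \int_{U}g^{n}+\mathcal{H}^{n}(U),
\]
which tends to $0$ as $\delta\to0$. The main obstacle I expect is the sphere-oscillation estimate in the metric setting, where spheres need not be nice. I would handle it exactly as in \cite[Theorem 7.2]{HKST01}, replacing spheres by annuli and using continuity plus the Poincaré inequality to bound ${\rm diam} f(\partial B)$ by the $L^{n}$ energy on a slightly larger ball. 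The treatment of the $P$ term is the secondary difficulty.
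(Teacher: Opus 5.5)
\textbf{First gap: the summation step fails as written.} The $5r$-covering lemma makes only the balls $B_i$ pairwise disjoint. Your sphere estimate, however, charges ${\rm diam}\,f(B(x_i,t_i))^n$ to $\int_{B(x_i,Cr_i)}g^n$, which lives on a ball larger by a fixed factor. Dilates of disjoint balls can overlap unboundedly: on a line, the disjoint balls $B(2^{-k},2^{-k}/4)$ all have $10$-fold dilates containing $0$. So neither ``disjointness with a harmless enlargement constant'' nor another Vitali-type selection yields $\sum_i\int_{B(x_i,Cr_i)}g^n\lesssim\int_U g^n$.

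The paper's proof, following \cite[Theorem 7.2]{HKST01}, is organised around exactly this point. It splits the set in two:
\begin{itemize}
\item $U_1$ consists of the points admitting arbitrarily small $r$ with $\int_{B(x,10\lambda r)}g^n<2L\int_{B(x,r/5)}g^n$. There the energy of the large ball is transferred to $B(x,r/5)$, and these small balls are disjoint after the Vitali selection.
\item On $U_2=U\setminus U_1$, failure of this doubling at all small scales, iterated (with $L$ large), gives $\int_{B(a,r)}g^n\lesssim r^n$. Hence the maximal function is bounded, ${\rm Lip}\,f$ is finite via \cite[Lemma 4.6]{HKST01}, and $\mathcal{H}^n(f(U_2))=0$ by \cite[Corollary 3.4 and Proposition 4.1]{GO18}, with no covering at all.
\end{itemize}
Since your sphere estimate holds for every ball, you could instead keep a one-step argument by replacing the Vitali selection with a Whitney-type cover of $U$, with radii comparable to the distance to $\mathcal{S}\setminus U$. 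Its fixed dilates stay in $U$ and have bounded overlap. But some such device is indispensable.

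\textbf{Second gap: the $P$-term, where your proposed way out cannot work.} Convexity and $P(0)=0$ give only $P(t)\le tP(1)$ for $t\le 1$. The condition $P(2t)\lesssim P(t)$ gives only a polynomial \emph{lower} bound $P(t)\gtrsim t^{s}$. Neither yields $P(r)\lesssim r^{n}$. For example, $P(t)=t$ meets every hypothesis, and then $\sum_iP(r_i)=\sum_i r_i$ over a cover of an $\mathcal{H}^n$-null set (say a segment) is bounded below. Imposing $P(r_i)\lesssim\mathcal{H}^n(B_i)$ is therefore an extra assumption, not a proof.

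You will not find the missing bound in the paper either. Its proof starts from sets $U$ with $\mathcal{H}^P(U)=0$ instead of $\mathcal{H}^n(U)=0$, and uses this to assert $\sum_iP(r_i)<\varepsilon$. Even that is not automatic, since the $r_i$ are Vitali-selected rather than taken from an efficient $\mathcal{H}^P$-cover. Under the actual hypothesis of condition (N), the covering argument closes only when $P(r)\lesssim r^n$ near $0$. In that case, disjointness of the small balls inside $U$ and Ahlfors regularity give $\sum_iP(r_i)\lesssim\mathcal{H}^n(U)$, which is what your final display needs.
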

\begin{rem}
Note that taking $P(r)=r^{\alpha}$ $,0<\alpha\leq n,$ in (\ref{jhxjhjca}) yields condition (\ref{hahdjad}). We also observe that Theorem \ref{axkjlax} is new even in Euclidean space. Indeed, there exists a Young function $P$ which satisfying $r^{\beta}\aleq P(r)$ and $P(2t)\aleq P(t)$ for small $r>0$ and any $\beta>0.$ In addition, the proof of \cite[Theorem 3.1]{Put03} relies on the trace theorem for Sobolev mappings in Euclidean space. In contrast, the proof of Theorem \ref{axkjlax} below does not require this result. Here, it also need to note that we use the $P$-Hausdroff measure of $E$ defined by
$$\mathcal{H}^{P}(E):=\sup_{\delta>0}\inf\sum_{i}P(r_{i}),$$
where the infimum is taken over all the coverings of the set $E$ by balls $B_{i}$ with radius $r_{i}$ not exceeding $\delta.$
\end{rem}
\begin{proof}
The proof of this result is similar to \cite[Theorem 7.2]{HKST01}. We must show that $\mathcal{H}^{n}(f(U))=0$ for every $U\subset\subset\Omega$ with $\mathcal{H}^{P}(U)=0.$ Without loss of generality, we may assume that 
$U\subset B(x_{1}, R_{1})$ and $g$ is an upper gradient of $f$ in some neighborhood of $U.$  For every fix $\varepsilon>0,$ we additionally assume that $\Omega\supset U$ is a fix open domain with which satisfying 
 $$\int_{\Omega}g^{n}d\mathcal{H}^{n}<\varepsilon.$$
We consider the set 
$$U_{1}=\left\{x\in U: \int_{B(x,10\lambda r)}g^{n}d\mathcal{H}^{n}<2L\int_{B(x,r/5)}g^{n}d\mathcal{H}^{n}\right\}$$
and denote $U_{2}=U\setminus U_{1}.$ Then, we must show that $\mathcal{H}^{n}(f(U_{1}))=\mathcal{H}^{n}(f(U_{2}))=0.$

We first show that $\mathcal{H}^{n}(f(U_{1}))=0.$ For every $x\in U_{1},$ then we chose a $r_{x}>0$ such that $B(x,10 \lambda r_{x})\subset\Omega$ and 
$$\int_{B(x,10\lambda r_{x})}g^{n}d\mathcal{H}^{n}<2L\int_{B(x,r_{x}/5)}g^{n}d\mathcal{H}^{n}.$$
Then, by \cite[Theorem 7.1]{HK00} we see that  there exists a radius $r\in(r_{x},2r_{x}),$ such that 
\begin{equation}\label{dsfvgbd}
\abs{f(x_{0})-f(y_{0})}\leq c\left(\int_{B(x,10\lambda r_{x})}g^{n} \;d\mathcal{H}^{n}\right)^{1/n}
\end{equation}
for every $x_{0},y_{0}\in\Omega$ satisfying $d(x_{0},x)=d(y_{0},x)=r.$ Now, by using of the $(K,P)$-pseudomonotone of $f,$ we obtain
\begin{equation}\label{1266dcsgbd}
\begin{aligned}
({\rm diam}f(B(x,r_{x})))^{n}\leq& M_{1}({\rm diam}f(\partial B(x,r)))^{n}+M_{2}P(r)\\
\leq& M_{1}\int_{B(x,10\lambda r_{x})}g^{n} \;d\mathcal{H}^{n} +M_{2}P(2r_{x})\\
\leq& M_{1}\int_{B(x,10\lambda r_{x})}g^{n} \;d\mathcal{H}^{n} +M_{2}P(r_{x}).\\
\end{aligned}
\end{equation}
Hence, we get that $U_{1}\subset\cup_{x\in U_{1}}B(x,r_{x})$ with $B(x,r_{x})$ satisfying the inequality (\ref{1266dcsgbd}).  By covering theorem \cite[Chapter 1]{Hei01}, we can chose a countable collection
$\{B(x_{i},r_{i})\}$ which covers $U_{1}$ and $\{B(x_{i},r_{i}/5)\}$ is pairwise disjoint.
By using this covering and inequality (\ref{1266dcsgbd}), we obtain
\begin{equation}\label{12sdvv vd66dcsgbd}
\begin{aligned}
&\sum_{i}({\rm diam}f(B(x_{i},r_{i})))^{n}\\
\leq& M_{1}\sum_{i}\int_{B(x_{i},10\lambda r_{i})}g^{n} \;d\mathcal{H}^{n}+M_{2}\sum_{i}P(r_{i})\\
<& M_{1}\sum_{i}\int_{B(x_{i}, r_{i}/5)}g^{n} \;d\mathcal{H}^{n} +M_{2}\sum_{i}P(r_{i})\\
\leq& M_{1}\int_{\Omega}g^{n} \;d\mathcal{H}^{n} +M_{2}\sum_{i}P(r_{i})<(M_{1}+M_{2})\varepsilon,\\
\end{aligned}
\end{equation}
which implies that $\mathcal{H}^{n}(f(U_{1}))=0$ by letting $\varepsilon\rightarrow0^{+}.$

Next, we show that $\mathcal{H}^{n}(f(U_{2}))=0.$ This can be done, according to \cite[Corollary 3.4 and Proposition 4.1.]{GO18}, by showing that for every 
$a\in U_{2},$ there has $${\rm Lip}(a):=\limsup_{x\rightarrow a}\frac{\vert{f(x)-f(a)\vert}}{\vert{x-a\vert}}<\infty.$$
Since $a\in U_{2},$ we see that there exists an integer $m>1,$ such that for every $0<r<1/m,$ there holds the following 
\begin{equation}\label{12sd}
2L\int_{B(a,r/5)}g^{n}d\mathcal{H}^{n}\leq\int_{B(a,10\lambda r)}g^{n}d\mathcal{H}^{n}.
\end{equation}\label{dcfevf12sd}
If we let $\omega(r)=\int_{B(a,r/5)}g^{n}d\mathcal{H}^{n},$ then the inequality (\ref{12sd}) gives us that 
\begin{equation}
\omega(r)\leq \omega(50\lambda r)/(2L).
\end{equation} For every fix $0<t<1/m,$ we chose 
$k\in \mathbb{N}$ with $(50\lambda)^{-k-1}\leq mr<(50\lambda)^{-k}.$ Then, by the inequality (\ref{dcfevf12sd}), we get for every $0<r<1/m$
$$\omega(r)\leq Cr^{n}.$$ Now, we take every $x\in U_{2}$ with $x\in B(a,1/10m),$ according to \cite[Lemma 4.6]{HKST01}, we deduce that 
$$\limsup_{x\rightarrow a}\frac{\vert f(x)-f(a)\vert}{\vert x-a\vert}\leq C\limsup_{x\rightarrow a}(M_{1/m,n}g(x)+M_{1/m,n}g(a))\leq C.$$
This finishes the proof.
\end{proof}

\section{Totally disconnected  for values of finite distortion and its corollaries}\label{sec3333}

\subsection{The totally disconnected}
In this section, we will study  the totally disconnected  for values of finite distortion from generalized $n$-manifold with controlled geometry to Euclidean space.
Firstly, we need the following Caccioppoli-type estimate deduced from Lemma \ref{Kir16}. Estimates of this kind  in Euclidean space have been established by  Kangasniemi and  Onninen \cite[Lemma 6.2]{KO221} and  Onninen and Zhong \cite[Lemma 2.1]{OZ08}. 
\begin{lem}\label{shksujck}
Suppose that $f\in N_{loc}^{1,n}(\Omega,\mathbb{R}^{n})\cap L_{loc}^{\infty}(\Omega,\mathbb{R}^{n}),$ $\Omega\subset\mathcal{S}$ is a relatively compact open set and $\eta:\Omega\rightarrow \mathbb{R}$ is a Lipschitz mapping with 
  ${\rm spt}\eta\subset\Omega.$ If $\varphi:[0,\infty)\rightarrow [0,\infty)$ is piecewise $C^{1}$-smooth function with $\varphi'$ is locally bounded, then we have
  \begin{equation}
  \abs{\int_{\Omega}\eta [n\varphi(\abs{f}^{2})+2\varphi'(\abs{f}^{2})\abs{f}^{2}]J_{f}}
  \aleq\int_{\Omega}\varphi(\abs{f}^{2})\abs{f}\abs{\nabla\eta}\| {\rm ap}Df\|^{n-1}.
  \end{equation}
\end{lem}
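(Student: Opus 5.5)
The plan is to apply Lemma \ref{Kir16} to the composed map $F:=\Phi\circ f$, where $\Phi(y)=\varphi(\abs{y}^{2})\,y$. Since $f\in N_{loc}^{1,n}\cap L_{loc}^{\infty}$ and $\varphi$ is piecewise $C^{1}$ with locally bounded derivative, $\Phi$ is locally Lipschitz on the bounded range of $f$ over ${\rm spt}\,\eta$. Hence $F\in N_{loc}^{1,n}(\Omega,\mathbb{R}^{n})$, and the chain rule holds for approximate derivatives almost everywhere:
\[
{\rm ap}DF(x)=D\Phi(f(x))\circ{\rm ap}Df(x).
\]
For this I would first check the chain rule at points of approximate differentiability of $f$. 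At the finitely many breakpoints of $\varphi$, I would argue that the set $\{\abs{f}^{2}=t_{0}\}$ contributes nothing: either ${\rm ap}Df=0$ a.e. there (the standard level-set property of Newtonian functions, transferred through \cite[Corollary 2.17]{Kir14}), or I would approximate $\varphi$ by $C^{1}$ functions and pass to the limit by dominated convergence.

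Next comes the algebraic computation of the Jacobian. We have
\[
D\Phi(y)=\varphi(\abs{y}^{2})I+2\varphi'(\abs{y}^{2})\,y\otimes y,
\]
a rank-one perturbation of a multiple of the identity. Its determinant is
\[
\varphi^{n-1}\bigl(\varphi+2\varphi'\abs{y}^{2}\bigr).
\]
So $J_{F}=\varphi(\abs{f}^{2})^{n-1}[\varphi+2\varphi'\abs{f}^{2}]J_{f}$. This is not quite the integrand in the statement, which has the form $n\varphi+2\varphi'\abs{f}^{2}$. That form is the divergence $\operatorname{div}\Phi$, and it suggests the intended test is different. The intended identity comes from applying Lemma \ref{Kir16} once for each $j$, with $f_{j}$ replaced by $\varphi(\abs{f}^{2})f_{j}$ in the $j$-th slot only, and summing over $j$. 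Concretely, I will use the linearity of the Jacobian in the $j$-th row. Let $J^{(j)}$ denote the Jacobian of $(f_{1},\dots,\varphi(\abs{f}^{2})f_{j},\dots,f_{n})$. Then
\[
J^{(j)}=\varphi(\abs{f}^{2})J_{f}+f_{j}\,J(f_{1},\dots,\varphi(\abs{f}^{2}),\dots,f_{n}),
\]
and $\nabla\varphi(\abs{f}^{2})=2\varphi'\sum_{k}f_{k}\nabla f_{k}$ in the $j$-th slot contributes only through $k=j$, giving $2\varphi'f_{j}^{2}J_{f}$. Summing over $j$ gives exactly $[n\varphi+2\varphi'\abs{f}^{2}]J_{f}$.

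Then for each $j$, Lemma \ref{Kir16} applied to the map with $j$-th component $\varphi(\abs{f}^{2})f_{j}$ gives
\[
\int\eta J^{(j)}=-\int\varphi(\abs{f}^{2})f_{j}\,J(f_{1},\dots,\eta,\dots,f_{n}).
\]
Hadamard's inequality bounds $\abs{J(f_{1},\dots,\eta,\dots,f_{n})}\le\abs{\nabla\eta}\,\|{\rm ap}Df\|^{n-1}$, where $\abs{\nabla\eta}$ is interpreted as the norm of the approximate differential of $\eta$ on the tangent plane, which is bounded by the Lipschitz constant, or more precisely by the pointwise Lipschitz constant. Summing over $j$ and using $\abs{f_{j}}\le\abs{f}$ gives the claim with constant $n$.

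The main obstacle is justifying the regularity and chain rule: that $\varphi(\abs{f}^{2})f_{j}\in N_{loc}^{1,n}$ with the expected approximate differential on the rectifiable set $\mathcal{S}$, so that Lemma \ref{Kir16} applies. The boundedness of $f$ makes everything Lipschitz-composed, so I would rely on the fact that Lipschitz post-composition preserves $N^{1,n}$ and commutes with approximate differentiation wherever $\varphi$ is differentiable. The breakpoints would be handled by the $C^{1}$ approximation argument.
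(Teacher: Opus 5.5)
Your proposal is correct and is essentially the paper's proof. Like the paper, you replace the $j$-th component of $f$ by $\varphi(\abs{f}^{2})f_{j}$, apply Lemma \ref{Kir16} to each such map, bound $\abs{J(f_{1},\ldots,\eta,\ldots,f_{n})}\le\abs{\nabla\eta}\,\|{\rm ap}Df\|^{n-1}$, and sum over $j$. Your row-linearity computation giving exactly $[n\varphi+2\varphi'\abs{f}^{2}]J_{f}$ spells out a step the paper leaves implicit, and you were right to drop the $\Phi\circ f$ route.

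One small correction: on a breakpoint level set $\{\abs{f}^{2}=t_{0}\}$, it is the approximate differential of $\abs{f}^{2}$, not ${\rm ap}Df$, that vanishes a.e. That weaker fact is all the chain rule for $\varphi(\abs{f}^{2})$ needs.
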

\begin{proof}
Without loss of generality, we may assume that $\Omega$ is an open and bounded domain such that $f\in N^{1,n}(\Omega,\mathbb{R}^{n})\cap L^{\infty}(\Omega,\mathbb{R}^{n}),$ and $\varphi$ and $\varphi'$
are bounded. If by letting that $F_{i}=(f_{1},f_{2},\ldots,\varphi(\vert f\vert^{2})f_{i},\ldots,f_{n}),$ then we have  $F_{i}\in N^{1,n}(\Omega,\mathbb{R}^{n})\cap L^{\infty}(\Omega,\mathbb{R}^{n}).$ To show this we must verify that 
$\varphi(\vert f\vert^{2})f_{i}\in N^{1,n}(\Omega)\cap L^{\infty}(\Omega).$ Since  $f_{j}\in N^{1,n}(\Omega)\cap L^{\infty}(\Omega)$ for all $1\leq j\leq n,$ and 
$\varphi$ and $\varphi'$ are bounded,  we only verify that $\vert D(\varphi(\vert f\vert^{2})f_{j})\vert\in L_{loc}^{n}(\Omega).$   This conclusion follows by 
$$\vert D(\varphi(\vert f\vert^{2})f_{i})\vert\leq2\vert\varphi'(\vert f\vert^{2})\vert\vert f_{i}\vert\sum_{j=1}^{n}\vert f_{i}\vert\vert Df_{j}\vert+\vert\varphi(\vert f\vert^{2})\vert\vert Df_{i}\vert<\infty.$$  
Now, by virtue of Lemma \ref{Kir16}, we get 
\begin{equation}
\begin{aligned}
&\abs{\int_{\Omega}\eta [\varphi(\abs{f}^{2})+2\varphi'(\abs{f}^{2})\vert f\vert^{2}] J_{f}d\mathcal{H}^{n}}\\
\aleq&\sum_{j=1}^{n}\int_{\Omega}\varphi(\abs{f}^{2}) \vert f_{j}\vert\vert J(f_{1},\cdots,f_{j-1},\eta,f_{j+1},\cdots,f_{n})\vert d\mathcal{H}^{n}\\
\aleq&\int_{\Omega}\varphi(\abs{f}^{2}) \vert f\vert\abs{\nabla\eta}\|{\rm ap}Df\|^{n-1} d\mathcal{H}^{n}.
\end{aligned}
\end{equation}
This finishes the proof.
\end{proof}
We also need the following basic fact. 
\begin{lem}\label{shksujcfdihikd}
Suppose that  $x\geq0, n\geq2$ and $M,N$ are two positive constants which are independent of $x.$  If  $x\leq Mx^{\frac{n-1}{n}}+N,$ then there exists a constant $c=c(M,N,n)>0,$ such that $x\leq c(M,N,n).$
\end{lem}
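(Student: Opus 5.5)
The lemma is elementary, and the plan is to split according to whether $x$ is small or large compared with a threshold depending only on $M$, $N$ and $n$. The guiding observation is that the right-hand side $Mx^{(n-1)/n}+N$ grows sublinearly in $x$, since $(n-1)/n<1$. Hence for large $x$ the inequality $x\leq Mx^{(n-1)/n}+N$ must fail, and every admissible $x$ lies below an explicit bound.

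First I treat the case $x\leq 1$, where there is nothing to prove: any constant $c\geq1$ works. In the remaining case $x>1$ I estimate the right-hand side by a single power of $x$. Since $N$ is a constant and $x^{(n-1)/n}>1$, we have
\[
x\leq Mx^{\frac{n-1}{n}}+N\leq (M+N)x^{\frac{n-1}{n}}.
\]
Dividing by $x^{(n-1)/n}>0$ gives $x^{1/n}\leq M+N$, that is, $x\leq (M+N)^{n}$. Combining the two cases, we may take
\[
c(M,N,n)=\max\{1,(M+N)^{n}\},
\]
which depends only on $M$, $N$ and $n$, as required.

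An alternative route, which may be preferable if one wants a bound homogeneous in $M$ and $N$, is Young's inequality with exponents $n$ and $n/(n-1)$:
\[
Mx^{\frac{n-1}{n}}\leq \frac{n-1}{n}x+\frac{1}{n}M^{n}.
\]
Absorbing the term $\frac{n-1}{n}x$ into the left-hand side yields $x\leq M^{n}+nN$. Either argument suffices. There is no genuine obstacle; the only point requiring care is to keep $x\geq0$, so that the powers $x^{(n-1)/n}$ and $x^{1/n}$ are well defined and the division step is legitimate.
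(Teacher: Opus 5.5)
Your proof is correct, both the case split and the Young-inequality absorption, but it is not the route the paper takes.

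The paper disposes of $x=0$ and, for $x>0$, works with the auxiliary function $\mathcal{G}(t)=t^{1/n}-Nt^{-(n-1)/n}$. Dividing the hypothesis by $x^{(n-1)/n}$ turns it into $\mathcal{G}(x)\le M$. Since $\mathcal{G}$ is a continuous, strictly increasing bijection of $(0,\infty)$ onto $\mathbb{R}$, this forces $x\le c:=\mathcal{G}^{-1}(M)$; the paper leaves both the reformulation and this last monotonicity step implicit.

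What the paper's route buys is the optimal constant: the set of admissible $x$ is exactly $[0,\mathcal{G}^{-1}(M)]$, whose endpoint is the positive root of $t=Mt^{(n-1)/n}+N$. The price is that the constant is only implicit, and one has to check the monotonicity and the limits of $\mathcal{G}$.

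Your arguments need no auxiliary function and give explicit, non-sharp constants, $\max\{1,(M+N)^n\}$ or $M^n+nN$. The second is the standard absorption step in Caccioppoli-type estimates, and it shows the bound is linear in $M^n$ and $N$. That is useful where the lemma is applied in Theorem~\ref{th11000}: there $M^n$ is, up to a factor depending on $n$, the quantity $\|K^{n-1}\vert\nabla\eta_1\vert^n\|_{L^1(\Omega)}$, so your bound makes the dependence of the final constant on the data transparent.
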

\begin{proof}
If $x=0,$ then we can chose any positive constant $c$ such that $x\leq c.$ Now, we assume that $x>0$ and consider the function $$\mathcal{G}(x)=x^{1/n}-\frac{N}{x^{1-\frac{1}{n}}},x>0.$$
It is obvious that $\lim_{x\rightarrow0^{+}}\mathcal{G}(x)=-\infty$ and $\lim_{x\rightarrow+\infty}\mathcal{G}(x)=+\infty$ and $\mathcal{G}$ is strictly monotonically increasing when $x>0.$ Hence, for any $t>0,$ there exists 
a unique $c=c(n,N,t)>0$ such that $\mathcal{G}(x)=t.$ So the desired result follows by letting $t=M$ and $c(M,N,n)=\mathcal{G}^{-1}(M).$
\end{proof}

By virtue of Lemmas \ref{shksujck} and \ref{shksujcfdihikd}, we have the following result. For simplicity, we introduce the following symbol  $$A(p,q,n):=\left\{(p,q,n):p>n-1,n>2,q>1\;{\rm and}\;\frac{1}{p}+\frac{1}{q}<1\right\}.$$
\begin{thm}\label{th11000}
Let $\Omega\subset\mathcal{S}$ be a bounded domain and $y_{0}\in\mathbb{R}^{n}$. Suppose that $f$ belongs to  $N_{loc}^{1,n}(\Omega,\mathbb{R}^{n})\cap \mathcal{C}(\Omega,\mathbb{R}^{n})$ and satisfies the following inequality
\begin{equation}
\| {\rm ap}Df(x)\|^{n}\leq K(x)J_{f}(x)+\Sigma(x)\abs{f(x)-y_{0}}^{n}
\end{equation}
for almost everywhere $x\in\Omega.$ If $K\in L_{loc}^{p}(\Omega)$ and $\Sigma/K\in L_{loc}^{q}(\Omega)$ with $(p,q)\in A(p,q,n),$  then 
$g\in L_{loc}^{q_{1}}(\Omega),$ where 
\begin{equation}\label{xgaufhgjhjkkixkx}
  g(x):=\left\{
\begin{aligned}
&g_{u}   &   &    {\rm if}\;{x\in \Omega\setminus f^{-1}\{y_{0}\}},\\
&0       &      &{\rm if}\; {x\in f^{-1}\{y_{0}\}}
\end{aligned} \right.
\end{equation}
 and $u(x)=\log^{+}\log^{+}1/\vert f(x)-y_{0}\vert$ and $q_{1}=pn/(1+p)\in(n-1, n).$ 
\end{thm}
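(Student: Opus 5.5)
Without loss of generality take $y_0=0$ (replace $f$ by $f-y_0$, which changes neither $\mathrm{ap}Df$ nor $J_f$). Since $f$ is continuous, on a compact subdomain $f$ is bounded, so $f\in L^\infty_{loc}$ and Lemma \ref{shksujck} applies. Let $g_u$ be the pointwise derivative of $u=\log^+\log^+ 1/|f|$. On $\{|f|<e^{-1}\}\setminus f^{-1}\{0\}$ it satisfies
\[
g_u\le \frac{\|\mathrm{ap}Df\|}{|f|\,\log(1/|f|)},
\]
and $g_u=0$ where $|f|\ge e^{-1}$. The task is therefore to bound
\[
\int \eta^{n}\,\frac{\|\mathrm{ap}Df\|^{q_1}}{|f|^{q_1}\log^{q_1}(1/|f|)}
\]
on a ball, with $\eta$ a Lipschitz cutoff.

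\textbf{Choice of test function.} I would use Lemma \ref{shksujck} with
\[
\varphi(s)=s^{-n/2}\,\log^{-n}\!\big(1/\sqrt{s}\big)
\]
for small $s$, truncated near $s=0$ at level $\delta$ (made constant for $s<\delta^2$) and near $s=e^{-2}$, so that $\varphi$ is piecewise $C^1$ with bounded derivative; let $\delta\to0$ at the end by monotone convergence. For this choice, $n\varphi(|f|^2)+2\varphi'(|f|^2)|f|^2$ equals $n|f|^{-n}\log^{-n-1}(1/|f|)$ up to a positive constant. This is the weight $w=|f|^{-n}\log^{-n-1}(1/|f|)$, and the Jacobian term becomes $\int\eta^n w J_f$ (with test function $\eta^n$). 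The right-hand side of Lemma \ref{shksujck} gives
\[
\int \eta^{n-1}|\nabla\eta|\,|f|^{1-n}\log^{-n}(1/|f|)\,\|\mathrm{ap}Df\|^{n-1}.
\]

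\textbf{Using the distortion inequality.} From the inequality,
\[
\|\mathrm{ap}Df\|^n w\le K J_f w+\Sigma |f|^n w,
\]
and $\Sigma|f|^n w=\Sigma\log^{-n-1}$ is bounded by $\Sigma$. Dividing by $K$ (since $K\ge1$) gives
\[
\int\eta^n\frac{\|\mathrm{ap}Df\|^n w}{K}\le \int\eta^n J_f w+\int\eta^n\frac{\Sigma}{K}.
\]
The last term is finite because $\Sigma/K\in L^q_{loc}$. Set
\[
X=\int\eta^n \frac{\|\mathrm{ap}Df\|^n w}{K}.
\]
Writing $\|\mathrm{ap}Df\|^{n-1}=(\|\mathrm{ap}Df\|^n w/K)^{(n-1)/n}K^{(n-1)/n}w^{-(n-1)/n}$ in the right-hand side, we have $|f|^{1-n}\log^{-n}\cdot w^{-(n-1)/n}=\log^{-n+(n+1)(n-1)/n}=\log^{-1/n}\le1$. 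Hölder with exponents $n/(n-1)$ and $n$ then bounds the right-hand side by
\[
X^{(n-1)/n}\Big(\int |\nabla\eta|^n K^{n-1}\Big)^{1/n},
\]
which is finite since $K\in L^p_{loc}$ and $p>n-1$. Hence $X\le MX^{(n-1)/n}+N$, and Lemma \ref{shksujcfdihikd} gives $X\le c$ uniformly in $\delta$. (One must check $X<\infty$ for each truncation $\delta$, which holds because there $w$ is bounded.)

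\textbf{Passing to $g_u$.} Now $g_u^{q_1}\le \|\mathrm{ap}Df\|^{q_1}|f|^{-q_1}\log^{-q_1}$. Write this as $(\|\mathrm{ap}Df\|^nw/K)^{q_1/n}$ times $K^{q_1/n}$ times a power of $\log$; the $\log$ exponent is $-q_1+q_1(n+1)/n=q_1/n>0$, which is bad. To fix this I would instead use $\varphi$ with $\log^{-n+1}$, i.e. weight $w=|f|^{-n}\log^{-n}$. Then $n\varphi+2s\varphi'$ equals $(n-1)|f|^{-n}\log^{-n}$, still positive, and the same Hölder step leaves the factor $\log^{(1-n)+(n-1)}=1$, which is harmless. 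Hölder with exponents $n/q_1$ and $n/(n-q_1)$ then needs $K^{q_1/(n-q_1)}=K^{p}$ when $q_1=pn/(1+p)$, and this is integrable. This gives $g\in L^{q_1}_{loc}$ on $\Omega\setminus f^{-1}\{0\}$; since $g=0$ on $f^{-1}\{0\}$, the claim follows.

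\textbf{Main obstacle.} The delicate step is making the truncation of $\varphi$ near $0$ legitimate: $\varphi'$ must stay locally bounded and the sign of $n\varphi+2s\varphi'$ must be controlled on the truncated region. The Jacobian term has indefinite sign, so it must only be used through the distortion inequality on the set where the weight is positive. I would handle the flat region ($\varphi$ constant for $s<\delta^2$) by noting that there the weight is $n\varphi(\delta^2)\ge0$, so the same argument applies, and then pass $\delta\to0$ by monotone convergence.
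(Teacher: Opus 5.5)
Your argument is correct and is essentially the paper's proof. After your correction, you apply Lemma \ref{shksujck} with test function $\eta^n$ and $\varphi(s)\propto s^{-n/2}\log^{1-n}(1/s)$, which gives the weight $|f|^{-n}\log^{-n}(1/|f|)$; you then absorb via Lemma \ref{shksujcfdihikd} and finish with H\"older against $K^{p}$ using $q_1/(n-q_1)=p$, exactly as the paper does. The only difference is the regularization near $0$: the paper uses the smoothed version $\varphi_\varepsilon(t)=\frac{1}{2t^{n/2}}\int_0^t\frac{\phi_\varepsilon(s)}{s\log^n(1/s)}\,ds$ of your $\varphi$ and lets $\varepsilon\to0$, whereas your flat truncation at $\delta$ followed by a Fatou limit works equally well, and your explicit check that $X<\infty$ for each truncation is a point the paper leaves implicit.
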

\begin{proof}
Without loss of generality, we may assume that $y_{0}=0.$ Since $ f$ is continuous on $\Omega$,  the set $$\Omega_{1}:=\{x\in\Omega:\abs{f(x)}<1/e^{2}\}$$ is an open domain and $\log^{+}\log^{+}(1/\vert f(x)\vert) =0$ outside of $\Omega_{1}.$ So we may, without loss of generality, consider that $\Omega_{1}=\Omega.$  We also assume that $f\in N^{1,n}(\Omega,\mathbb{R}^{n})\cap \mathcal{C}(\Omega,\mathbb{R}^{n}),K\in L^{p}(\Omega)$ and and $\Sigma/K\in L^{q}(\Omega).$ If we denote $$u(x)=\log\log\frac{1}{\vert f(x)\vert},x\in\Omega,$$ then by \cite[Proposition 2.17]{BB11}, we see that 
$u\in N^{1,n}(\Omega,\mathbb{R}^{n})\cap \mathcal{C}(\Omega,\mathbb{R}^{n}).$ Now, we fix $\varepsilon>0$ and consider the following mapping 
$$\varphi_{\varepsilon}(t)=\frac{1}{2t^{\frac{n}{2}}}\int_{0}^{t}\frac{\phi_{\varepsilon}(s)}{s\log^{n}(\frac{1}{s})}ds,$$
where $$\phi_{\varepsilon}(s)=\frac{1}{1+\varepsilon2^{\frac{1}{s}}}.$$
For every fixed compact subset $\Omega_{1}\subset\Omega,$ we take mappings $\varphi=\varphi_{\varepsilon}$ and $\eta=\eta_{1}^{n}.$ Here $\eta_{1}$ is a Lipschitz map with ${\rm spt}\eta_{1}\in \Omega$ satisfying $\eta_{1}=1$ on $\Omega_{1},$ $\vert\nabla\eta_{1}\vert\leq1/r$ on $\Omega$ and $\eta_{1}=0$ on $\Omega\setminus\Omega_{1}.$
Then, by  virtue of Lemma \ref{shksujck}, we obtain 
\begin{equation}\label{09}
\begin{aligned}
&\int_{\Omega}\eta_{1}^{n}\frac{J_{f}}{\abs{f}^{n}\log^{n}(\frac{1}{\abs{f}})}\varphi_{\varepsilon}(\abs{f})d\mathcal{H}^{n}\\
\aleq& \int_{\Omega}\eta_{1}^{n-1}\vert\nabla\eta_{1}\vert\frac{\parallel ap Df\parallel^{n-1}}{\vert f\vert^{n-1}\log^{n-1}(\frac{1}{\vert f\vert})}\left(\varphi_{\varepsilon}(\vert f\vert)\right)^{\frac{n-1}{n}}d\mathcal{H}^{n}\\
\aleq&  \left[\int_{\Omega}\eta_{1}^{n}\frac{\parallel ap Df\parallel^{n}}{K\vert f\vert^{n}\log^{n}(\frac{1}{\vert f\vert})}\varphi_{\varepsilon}(\vert  f\vert)d\mathcal{H}^{n}\right]^{\frac{n-1}{n}}\cdot\left[\int_{\Omega}K^{n-1}\vert\nabla\eta_{1}\vert^{n}d\mathcal{H}^{n}\right]^{\frac{1}{n}}.\\
\end{aligned}
\end{equation}
Hence,  by combining $\frac{\parallel ap Df\parallel^{n}}{K}-\frac{\Sigma\vert f\vert^{n}}{K}\leq J_{f}$ and inequality (\ref{09}), we get that 
\begin{equation}\label{07}
\begin{aligned}
&\int_{\Omega}\eta_{1}^{n}\frac{\parallel ap Df\parallel^{n}}{K\vert f\vert^{n}\log^{n}(\frac{1}{\vert f\vert})}\varphi_{\varepsilon}(\vert  f\vert)d\mathcal{H}^{n}\\
\aleq&  \left[\int_{\Omega}\eta_{1}^{n}\frac{\parallel ap Df\parallel^{n}}{K\vert f\vert^{n}\log^{n}(\frac{1}{\vert f\vert})}\varphi_{\varepsilon}(\vert  f\vert)d\mathcal{H}^{n}\right]^{\frac{n-1}{n}}\cdot\left[\int_{\Omega}K^{n-1}\vert\nabla\eta_{1}\vert^{n}d\mathcal{H}^{n}\right]^{\frac{1}{n}}\\
&\;\;\;\;\;\;\;\;\;\;\;\;\;\;\;\;\;\;\;\;\;\;\;\;\;\;\;\;\;\;\;\;\;\;+\int_{\Omega}\eta_{1}^{n}\frac{\varphi_{\varepsilon}(\vert  f\vert)}{\log^{n}(\frac{1}{\vert f\vert})}\frac{\Sigma}{K}d\mathcal{H}^{n}\\
\leq& C(n,\parallel K^{n-1}\vert\nabla\eta_{1}\vert^{n}\parallel_{L_{1}(\Omega)}) \left[\int_{\Omega}\eta_{1}^{n}\frac{\parallel ap Df\parallel^{n}}{K\vert f\vert^{n}\log^{n}(\frac{1}{\vert f\vert})}\varphi_{\varepsilon}(\vert  f\vert)d\mathcal{H}^{n}\right]^{\frac{n-1}{n}}\\
&\;\;\;\;\;\;\;\;\;\;\;\;\;\;\;\;\;\;\;\;\;\;\;\;\;\;\;\;\;\;\;\;\;\;\;\;\;\;\;\;\;\;\;\;\;\;\;\;\;\;\;\;\;\;\;\;\;\;\;+\int_{\Omega}\eta_{1}^{n}\frac{\varphi_{\varepsilon}(\vert  f\vert)}{\log^{n}(\frac{1}{\vert f\vert})}\frac{\Sigma}{K}d\mathcal{H}^{n}.\\
\end{aligned}
\end{equation}
Since $$\int_{\Omega}\eta_{1}^{n}\frac{\varphi_{\varepsilon}(\vert  f\vert)}{\log^{n}(\frac{1}{\vert f\vert})}\frac{\Sigma}{K}d\mathcal{H}^{n}\aleq\int_{\Omega}\frac{\Sigma}{K} d\mathcal{H}^{n}
\aleq\left\|\frac{\Sigma}{K}\right\|_{L^{q}(\Omega)}(\mathcal{H}^{n}(\Omega))^{\frac{q-1}{q}}<\infty,$$
we conclude that, by using of Lemma \ref{shksujcfdihikd} and inequality (\ref{07}), there exists a constant $C= C(n,\parallel K^{n-1}\vert\nabla\eta_{1}\vert^{n}\parallel_{L^{1}(\Omega)},\left\|\Sigma/K\right\|_{L^{q}(\Omega)},\Omega),$ such that 
\begin{equation}\label{sxnlxk}
\int_{\Omega}\eta_{1}^{n}\frac{\| ap Df\|^{n}}{K\vert f\vert^{n}\log^{n}(\frac{1}{\vert f\vert})}\varphi_{\varepsilon}(\vert  f\vert)d\mathcal{H}^{n}\leq C(n,\| K^{n-1}\vert\nabla\eta_{1}\vert^{n}\|_{L^{1}(\Omega)},\left\|\Sigma/K\right\|_{L^{q}(\Omega)},\Omega).
\end{equation}
By dominated convergence theorem, we get that 
\begin{equation}\label{sxnlxk}
\int_{\Omega}\eta_{1}^{n}\frac{\| ap Df\|^{n}}{K\vert f\vert^{n}\log^{n}(\frac{1}{\vert f\vert})}d\mathcal{H}^{n}\leq C(n,\| K^{n-1}\vert\nabla\eta_{1}\vert^{n}\|_{L^{1}(\Omega)},\left\|\Sigma/K\right\|_{L^{q}(\Omega)},\Omega).
\end{equation}
Let $q_{1}=pn/(1+p),$  then we deduce that 
\begin{equation}\label{04sfbsf}
\begin{aligned}
&\int_{\Omega_{1}}\frac{\| ap Df\|^{q_{1}}}{\vert f\vert^{q_{1}}\log^{q_{1}}(\frac{1}{\vert f\vert})}d\mathcal{H}^{n}\\
=&\int_{\Omega_{1}}\left[\frac{\|ap Df\|^{n}}{K\vert f\vert^{n}\log^{n}(\frac{1}{\vert f\vert})}\right]^{\frac{q_{1}}{n}}\cdot K^{\frac{q_{1}}{n}}d\mathcal{H}^{n}\\
\leq& \left[\int_{\Omega_{1}}\frac{\| ap Df\|^{n}}{K\vert f\vert^{n}\log^{n}(\frac{1}{\vert f\vert})}\right]^{\frac{q_{1}}{n}}\cdot \left[\int_{\Omega_{1}}K^{p}(x)dx\right]^{\frac{1}{p}}\\
\leq& \left[\int_{\Omega}\frac{\| ap Df\|^{n}}{K\vert f\vert^{n}\log^{n}(\frac{1}{\vert f\vert})}\right]^{\frac{q_{1}}{n}}\cdot \left[\int_{\Omega}K^{p}(x)dx\right]^{\frac{1}{p}}<\infty.
\end{aligned}
\end{equation}
Therefore, by using of \cite[Proposition 2.17]{BB11} and H\"{o}lder inequality, we get for almost every $x\in\Omega\setminus f^{-1}\{0\}$  that 
\begin{equation}
\begin{aligned}
\int_{\Omega\setminus f^{-1}(0)}g_{u}^{q_{1}} d\mathcal{H}^{n}=&\int_{\Omega\setminus f^{-1}(0)}\frac{\| apDf\|^{q_{1}}}{\vert f\vert^{q_{1}}\log^{q_{1}}(\frac{1}{\vert f\vert})}d\mathcal{H}^{n}\\
\leq& \left[\int_{\Omega}\frac{\| apDf\|^{n}}{K\vert f\vert^{n}\log^{n}(\frac{1}{\vert f\vert})}\right]^{\frac{q_{1}}{Q}}\cdot \left[\int_{\Omega}K^{p}(x)dx\right]^{\frac{1}{p}}<\infty.\\
\end{aligned}
\end{equation}
This shows that $g\in L_{loc}^{q_{1}}(\Omega).$  This finishes the proof of Theorem \ref{th11000}.
\end{proof}

In order to imply that $f^{-1}\{y_{0}\}$ is totally disconnected, one can show that $\mathcal{H}^{1}(f^{-1}\{y_{0}\})=0.$ The following lemma gives us a way to deduce this conclusion. 

\begin{lem}{\rm\cite[Lemma 7.1]{Kir14}}\label{101209sdsdcsdcc}
Suppose that $\mathcal{S}$ support a weak $(n-1)$-Poincar\'{e} inequality and $1<n-1<p<n.$ Let $E\subset\mathcal{S}$ be a closed set such that there is a sequence $\{u_{i}\}_{i=1}^{\infty}$ of continuous functions with $n$-weak upper gradients $\{g_{i}\}_{i=1}^{\infty}$ respectively, satisfying the following conditions.
\begin{enumerate}
\item[{\rm(1)}] $u_{i}=1$ for every $i\in\mathbb{N}$ and for every $x\in E.$
\item[{\rm(2)}] $\int_{\Omega}(g_{i}(x))^{p}d\mathcal{H}^{n}\rightarrow0$ as $i\rightarrow\infty$ for every compact set $\Omega\subset\mathcal{S}.$
\item[{\rm(3)}] There exists $y\in\mathcal{S}$ and $R>0$ such that $u_{i}(x)\leq1/2$ for every $x\in\mathcal{S}\cap B(y,R)$ and for every $i\in\mathbb{N}.$
\end{enumerate} 
Then $\mathcal{H}_{\infty}^{1}(E)=0$ and thus also $\mathcal{H}^{1}(E)=0.$
\end{lem}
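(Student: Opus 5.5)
The plan is a capacity-type argument: first show that the averages of $u_{i}$ over a fixed large ball are eventually bounded away from $1$, then telescope along balls shrinking to each point of $E$, and finally cover $E$ by balls whose radii are controlled by $\int g_{i}^{p}$. Since $\mathcal{H}^{1}_{\infty}$ is countably subadditive and $\mathcal{S}$ is $\sigma$-compact, it suffices to show $\mathcal{H}^{1}_{\infty}(E\cap B(y,R_{0}))=0$ for every fixed large $R_{0}>R$. Two preliminary reductions come first.

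\emph{Reduction 1.} Replace $u_{i}$ by $\min\{\max\{u_{i},0\},1\}$. This does not affect (1) or (3), it keeps continuity, and $g_{i}$ remains an $n$-weak upper gradient. So we may assume $0\leq u_{i}\leq1$.

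\emph{Reduction 2.} Since $p>n-1$, the weak $(n-1)$-Poincar\'{e} inequality implies a weak $p$-Poincar\'{e} inequality by H\"{o}lder's inequality, with constants $C$ and $\lambda$. (Strictly, $g_{i}$ is assumed only to be an $n$-weak upper gradient, so I will use that a $p_{1}$-weak upper gradient is also a $p_{2}$-weak upper gradient for $p_{2}\leq p_{1}$; this holds because $\mathcal{H}^{n}$ is locally finite.)

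\emph{Step 1: the large-scale average.} Put $B_{0}=B(y,4R_{0})$. Applying Poincar\'{e} on $B_{0}$ and on $B(x,2R_{0})\subset B_{0}$ for $x\in B(y,R_{0})$, together with (2) on the compact set $\overline{\lambda B_{0}}$, gives $\|u_{i}-(u_{i})_{B_{0}}\|_{L^{1}(B_{0})}\to0$. Because $u_{i}\leq1/2$ on $B(y,R)\subset B_{0}$ and $\mathcal{H}^{n}(B(y,R))>0$ by Ahlfors regularity, the averages satisfy $(u_{i})_{B_{0}}\leq 1/2+o(1)$. Likewise, by Ahlfors regularity,
$$\abs{(u_{i})_{B(x,2R_{0})}-(u_{i})_{B_{0}}}\aleq \int_{B_{0}}\!\!\!\!\!\!\!\!\!\!\!\!\; {}-{}\,\,\,\abs{u_{i}-(u_{i})_{B_{0}}}\to0$$
uniformly in $x\in B(y,R_{0})$. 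Hence for large $i$ we have $(u_{i})_{B(x,2R_{0})}\leq 3/4$ for all such $x$, and we work only with these $i$.

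\emph{Step 2: telescoping.} Fix $x\in E\cap B(y,R_{0})$ and let $B_{j}=B(x,2^{-j}2R_{0})$ for $j\geq0$. By continuity of $u_{i}$, $(u_{i})_{B_{j}}\to u_{i}(x)=1$. Consequently
$$\tfrac14\leq\sum_{j\geq0}\abs{(u_{i})_{B_{j}}-(u_{i})_{B_{j+1}}}\aleq\sum_{j\geq0} r_{j}\Big(\int_{\lambda B_{j}}\!\!\!\!\!\!\!\!\!\!\!\!\; {}-{}\,\,\, g_{i}^{p}\Big)^{1/p},$$
where $r_{j}=2^{-j}2R_{0}$, using the doubling property and Poincar\'{e} on $B_{j}$. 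Fix a small $\delta>0$ with $\delta p<p-n+1$, which is possible because $p>n-1$. Since $\sum_{j}2^{-j\delta}<\infty$, there is an index $j=j(x)$ with $r_{j}(\,\fint_{\lambda B_{j}}g_{i}^{p})^{1/p}\geq c\,2^{-j\delta}$. By Ahlfors $n$-regularity this reads $r_{j}^{p-n}\int_{\lambda B_{j}}g_{i}^{p}\gtrsim 2^{-j\delta p}$. Multiplying by $r_{j}^{n-p}$ and writing $r_{j}=r_{j}^{n-p}\cdot r_{j}^{p-n+1}$ gives
$$r_{j}\aleq 2^{j\delta p}\,r_{j}^{\,p-n+1}\int_{\lambda B_{j}}g_{i}^{p}\aleq C(R_{0})\int_{\lambda B(x,r_{j(x)})}g_{i}^{p}\,d\mathcal{H}^{n},$$
because $2^{j\delta p}2^{-j(p-n+1)}$ is bounded. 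This exponent bookkeeping is the delicate point of the argument, and it is exactly where $p>n-1$ enters.

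\emph{Step 3: covering.} Apply the $5r$-covering lemma to the balls $\lambda B(x,r_{j(x)})$, $x\in E\cap B(y,R_{0})$, all of which lie in a fixed compact set. This yields disjoint balls $\lambda B(x_{k},r_{k})$ whose $5$-fold enlargements cover $E\cap B(y,R_{0})$. Therefore
$$\mathcal{H}^{1}_{\infty}(E\cap B(y,R_{0}))\aleq\sum_{k}r_{k}\aleq\int_{B(y,R_{0}+4\lambda R_{0})}g_{i}^{p}\,d\mathcal{H}^{n}\to0$$
as $i\to\infty$ by (2). Thus $\mathcal{H}^{1}_{\infty}(E)=0$, and $\mathcal{H}^{1}(E)=0$ follows by the standard fact that $\mathcal{H}^{1}_{\infty}$ and $\mathcal{H}^{1}$ have the same null sets.

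The main obstacle is Step 1: the uniform control of the averages at the top scale, which is what forces every telescoping sum to have total size at least $1/4$.
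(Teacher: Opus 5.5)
The paper gives no proof of this lemma; it is cited from Kirsil\"{a} [Kir14, Lemma 7.1], so there is no in-paper argument to compare against. Your proposal is correct. It is the standard capacity-type argument for statements of this kind:
\begin{itemize}
\item Poincar\'{e} telescoping along balls shrinking to a point of $E$.
\item Selection of a good scale via the summable weights $2^{-j\delta}$. This is exactly where $p>n-1$ enters, through the bound $2^{j\delta p}r_{j}^{p-n+1}\lesssim R_{0}^{p-n+1}$.
\item A $5r$-covering to bound the $\mathcal{H}^{1}_{\infty}$-content by $\int g_{i}^{p}$.
\end{itemize}
Your treatment of $n$-weak versus $p$-weak (and $(n-1)$-weak) upper gradients via local finiteness of $\mathcal{H}^{n}$ is also correct.

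You should state explicitly two assumptions you use that are not in the lemma's own hypotheses but are standing assumptions of the paper's setting.
\begin{itemize}
\item \emph{Ahlfors $n$-regularity.} The lower bound $\mathcal{H}^{n}(B(x,\lambda r))\gtrsim r^{n}$ for $r\leq 2R_{0}$ is what converts averages into integrals in Step 2. If $\mathcal{S}$ is bounded, the constant depends on $R_{0}$.
\item \emph{Properness of $\mathcal{S}$.} You need closed balls to be compact in order to apply hypothesis (2) on $\overline{\lambda B(y,4R_{0})}$ in Step 1 and on $\overline{B(y,(1+4\lambda)R_{0})}$ in Step 3. Without it you would have to localize the argument to a compact set.
\end{itemize}
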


Now, by Theorem \ref{th11000} and Lemmas \ref{101209sdsdcsdcc}, we have the following result.
\begin{thm}\label{101209sdc}
Let  $\Omega\subset\mathcal{S}$ be a domain and $y_{0}\in\mathbb{R}^{n}$. Suppose that an nonconstant mapping $f$ belongs to  $N_{loc}^{1,n}(\Omega,\mathbb{R}^{n})\cap \mathcal{C}(\Omega,\mathbb{R}^{n})$ and satisfies the following inequality
\begin{equation}
\| {\rm ap}Df(x)\|^{n}\leq K(x)J_{f}(x)+\Sigma(x)\abs{f(x)-y_{0}}^{n}
\end{equation}
for almost everywhere $x\in\Omega.$ If $K\in L_{loc}^{p}(\Omega)$ and $\Sigma/K\in L_{loc}^{q}(\Omega)$ with $(p,q)\in A(p,q,n),$  then $f^{-1}\{y_{0}\}$ is totally disconnected.
\end{thm}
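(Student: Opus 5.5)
The plan is to show $\mathcal{H}^{1}(f^{-1}\{y_{0}\}\cap B)=0$ for small balls $B$ and then conclude total disconnectedness. Lemma \ref{101209sdsdcsdcc} does this once we have a sequence of functions $u_{i}$ built from the $\log\log$ function of Theorem \ref{th11000}. Without loss of generality $y_{0}=0$.

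\emph{Step 1 (localization).} Since $f$ is nonconstant and continuous on the domain $\Omega$, the set $E=f^{-1}\{0\}$ is relatively closed and is not all of $\Omega$. Total disconnectedness is local, so fix $x_{0}\in E$. By connectedness of $\Omega$ we may take a point $z$ with $f(z)\neq 0$, joined to $x_{0}$ inside a relatively compact subdomain $\Omega'\subset\subset\Omega$ on which $K\in L^{p}$ and $\Sigma/K\in L^{q}$. By continuity there is a ball $B(z,R)\subset\Omega'$ with $\abs{f}\geq\delta>0$ on $B(z,R)$. It then suffices to show $\mathcal{H}^{1}(E\cap\overline{\Omega''})=0$ for every such compact piece $\overline{\Omega''}$. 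A set of zero $\mathcal{H}^{1}$ measure in a metric space is totally disconnected: a nondegenerate connected set $C$ containing $a\neq b$ satisfies $\mathcal{H}^{1}(C)\geq\abs{a-b}>0$, via the $1$-Lipschitz projection $x\mapsto\abs{x-a}$ onto an interval.

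\emph{Step 2 (test functions).} Let $u=\log^{+}\log^{+}(1/\abs{f})$, with $u=+\infty$ on $E$. Theorem \ref{th11000} shows that its upper gradient $g$, extended by $0$ on $E$, lies in $L_{loc}^{q_{1}}$ with $q_{1}=pn/(1+p)\in(n-1,n)$. For $i\in\mathbb{N}$ set
$$u_{i}=\min\{1,\max\{0,\,u/i\}\},$$
multiplied by a Lipschitz cutoff $\psi$ equal to $1$ near $\overline{\Omega''}$ and supported in $\Omega'$ but away from $B(z,R)$. This can be arranged by choosing $\Omega''$ and the cutoff appropriately, or simply by noting that $u\le \log^+\log^+(1/\delta)$ on $B(z,R)$, so that $u_i\le 1/2$ there for large $i$. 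Each $u_{i}$ is continuous because $f$ is, and $u_{i}=1$ on $E\cap\overline{\Omega''}$. An upper gradient is
$$g_{i}=\frac{g}{i}\chi_{\{0<u<i\}}+u_{i}\abs{\nabla\psi}.$$
The first term satisfies $\int g_{i}^{q_{1}}\lesssim i^{-q_{1}}\int_{\Omega'}g^{q_{1}}\to0$.

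\emph{Step 3 (the obstacle).} The cutoff term $u_{i}\abs{\nabla\psi}$ does not vanish as $i\to\infty$ unless $\operatorname{spt}\nabla\psi$ avoids the region where $u$ is comparable to $i$. Here we use continuity again: on the compact set $\operatorname{spt}\nabla\psi$, chosen to be disjoint from $E$, $\abs{f}$ is bounded below, so $u$ is bounded and $u_{i}\to0$ uniformly there. Hence $\int(u_{i}\abs{\nabla\psi})^{q_{1}}\to0$. This forces $\overline{\Omega''}$ to be a neighbourhood of a piece of $E$ whose "boundary" misses $E$. If this fails, we instead apply the lemma with $\Omega$ replaced by $\mathcal{S}$ and $u_{i}$ extended by $0$, which only requires $E\cap\overline{\Omega''}$ to be compactly covered. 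Since the lemma only needs $u_{i}=1$ on the closed set considered, we apply it to the compact sets $E\cap\overline{B}$ with $\psi\equiv1$ on a neighbourhood of $\overline B$.

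\emph{Step 4 (conclusion).} With $n-1<q_{1}<n$, the weak $(n-1)$-Poincar\'{e} inequality (from Semmes' theorem), and conditions (1)--(3), Lemma \ref{101209sdsdcsdcc} gives $\mathcal{H}^{1}(E\cap\overline{B})=0$. By countable exhaustion $\mathcal{H}^{1}(E)=0$, so $E$ is totally disconnected.

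The main obstacle I expect is the careful construction in Step 3. The functions must be globally defined with the $p$-energy tending to zero while also equal to $1$ on the target piece of $E$. We must also keep $n>2$, which is required in $A(p,q,n)$, so that $1<n-1$.
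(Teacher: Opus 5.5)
Your core construction is the same as the paper's. You use the truncations $v_i=\min\{1,u/i\}$ of $u=\log^{+}\log^{+}(1/\abs{f})$ (the paper's proof mislabels them as $g_i$), upper gradients $g/i$ whose $L^{q_1}$-norms tend to zero by Theorem \ref{th11000}, and Lemma \ref{101209sdsdcsdcc} to get $\mathcal{H}^1(f^{-1}\{y_0\})=0$. The paper applies the lemma directly and never addresses that it is formulated for functions on all of $\mathcal{S}$. You rightly noticed this, but your repair in Step 3 has a gap.

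\textbf{The cutoff term.} You cannot in general choose $\operatorname{spt}\nabla\psi$ disjoint from $E$ while keeping $\psi\equiv1$ near $\overline{B}$. A priori $E$ may contain a continuum running out of $\overline{B}$, which is exactly what is to be excluded. Your fallback ($\psi\equiv1$ near $\overline{B}$, support in $\Omega'$) leaves the transition region of $\psi$ unchanged. Since $0\le v_i\le1$ and $v_i\to\chi_E$ pointwise ($u=\infty$ on $E$, $u<\infty$ off $E$), dominated convergence gives
\[
\int (v_i\abs{\nabla\psi})^{q_1}\,d\mathcal{H}^n\longrightarrow\int_E\abs{\nabla\psi}^{q_1}\,d\mathcal{H}^n .
\]
This vanishes only if $\mathcal{H}^n(E\cap\{\nabla\psi\neq0\})=0$. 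So condition (2) of Lemma \ref{101209sdsdcsdcc} is not verified for your localized functions.

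\textbf{How to close it.} The missing ingredient is $\mathcal{H}^n(E)=0$, which you can extract from the same sequence before localizing.
\begin{enumerate}
\item Each $v_i$ lies in $N^{1,n}_{loc}(\Omega)$ with weak upper gradient $g/i$. To see this, write $\min\{u,i\}=\log^{+}\log^{+}(1/\max\{\abs{f},\delta_i\})$ with $\delta_i=e^{-e^{i}}$.
\item By dominated convergence $v_i\to\chi_E$ in $L^{q_1}_{loc}(\Omega)$, while $\|g/i\|_{L^{q_1}(\lambda B)}\to0$.
\item The weak $q_1$-Poincar\'e inequality on any ball $B$ with $\lambda B\subset\subset\Omega$ then gives $\int_B\abs{\chi_E-(\chi_E)_B}\,d\mathcal{H}^n=0$.
\item Hence $\chi_E$ is a.e.\ constant on such balls, and by chaining through the connected set $\Omega$, a.e.\ constant on $\Omega$.
\item The constant cannot be $1$: $\Omega\setminus E$ is a nonempty open set because $f$ is nonconstant, and it has positive measure by Ahlfors regularity.
\end{enumerate}
With $\mathcal{H}^n(E)=0$, the cutoff term tends to zero in $L^{q_1}$. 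Then $\psi v_i$ extended by zero is an admissible sequence on $\mathcal{S}$: it is continuous, equals $1$ on $E\cap\overline{B}$, and is $\le 1/2$ on $B(z,R)$ for large $i$. Your Steps 2 and 4 then go through.
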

\begin{proof}
We also only assume that the case that $y_{0}=0$. For every $i\in\mathbb{N},$ we consider the sequence of continuous mappings $$g_{i}(x)=\frac{\min\{g(x),i\}}{i},$$ 
where $g$ is defined by (\ref{xgauixkx}). Then, $g$ satisfies the following property: 
\begin{enumerate}
\item[{\rm(1)}] $g_{i}\equiv1$ for every $i\in\mathbb{N}$ and for every $x\in f^{-1}\{0\}.$
\end{enumerate} 
As $f$ is an nonconstant continuous mapping, we see that:  
\begin{enumerate}
\item[{\rm(2)}] There exists $y\in\mathcal{S}$ and $R>0$ such that $g_{i}(x)\leq1/2$ for every $x\in\mathcal{S}\cap B(y,R)$ when $i$ is sufficiently large. 
\end{enumerate} 
Then, according to Lemma \ref{101209sdsdcsdcc}, the conclusion of $\mathcal{H}^{1}(f^{-1}(0))=0$ will be deduced if we find an $n$-weak upper gradient $g_{i}$ of $u_{i},$ such that $g_{i}$ satisfying the following assert:
\begin{enumerate}
\item[{\rm(3)}] $\int_{\Omega}(g_{i}(x))^{p}d\mathcal{H}^{n}\rightarrow0$ as $i\rightarrow\infty$ for every compact set $\Omega\subset\mathcal{S}.$
\end{enumerate} 

To do so, we consider $g_{i}(x)=\frac{1}{i}g(x),$ where 
\begin{equation}\label{xgauixkx}
 g(x)=\left\{
\begin{aligned}
&g_{u}   &   &    {\rm if}\;{x\in \mathcal{S}\setminus f^{-1}(0)},\\
&0       &      &{\rm if}\; {x\in f^{-1}(0).}
\end{aligned} \right.
\end{equation}
Since by Theorem \ref{th11000}, we have $g\in L_{loc}^{q_{1}}(\Omega),$ where 
\begin{equation}\label{xgaufhgjhjkkixkx}
  g(x):=\left\{
\begin{aligned}
&g_{u}   &   &    {\rm if}\;{x\in \Omega\setminus f^{-1}(0)},\\
&0       &      &{\rm if}\; {x\in f^{-1}(0)}
\end{aligned} \right.
\end{equation}
 and $u(x)=\log^{+}\log^{+}1/\vert f(x)\vert$ and $q_{1}=pn/(1+p)\in(n-1, n).$ 
Therefore, we get that
$$\int_{\Omega}\vert g_{i}(x)\vert^{q_{1}} d\mathcal{H}^{n} \leq\frac{1}{i}\left[\int_{\Omega}\frac{\|  apDf(x)\|^{n}}{K(x)\vert f(x)\vert^{n}\log^{n}(\frac{1}{\vert f(x)\vert})}d\mathcal{H}^{n}\right]^{\frac{q_{1}}{n}}\cdot \left[\int_{\Omega}K^{p}(x)d\mathcal{H}^{n}\right]^{\frac{1}{p}}\rightarrow0,$$
 as $i\rightarrow\infty.$ This completes the proof.
\end{proof}
 
\subsection{Corollary I: The choice of components} 
Here we list some corollaries on the choice of components of $f^{-1}(\mathbb{B}(y_{0},\varepsilon)).$ Those results were obtained by Kangasniemi and  Onninen \cite{KO222} in the setting of Euclidean space. 
Since these results are topological in nature, we may directly obtain the corresponding conclusions on generalized $n$-manifold with controlled geomerty,  owing to the total disconnectedness established 
in Lemma \ref{101209sdc}. The readers can directly prove Corollaries \ref{9981}, \ref{sjslx} and \ref{sxijsxsx} by following \cite[Lemmas 4.1, 4.2 and 4.3]{KO222}. 
\begin{cor}\label{9981}
Let  $\Omega\subset\mathcal{S}$ be a domain and $y_{0}\in\mathbb{R}^{n}$. Suppose that an nonconstant mapping $f$ belongs to  $N_{loc}^{1,n}(\Omega,\mathbb{R}^{n})\cap \mathcal{C}(\Omega,\mathbb{R}^{n})$ and satisfies the following inequality
\begin{equation}
\| {\rm ap}Df(x)\|^{n}\leq K(x)J_{f}(x)+\Sigma(x)\abs{f(x)-y_{0}}^{n}
\end{equation}
for almost everywhere $x\in\Omega,$ where $K\in L_{loc}^{p}(\Omega)$ for some $p>n-1$ and $\Sigma\in L_{loc}^{1+\varepsilon}(\Omega)$ for some $\varepsilon>0.$ Then, there exists $\varepsilon_{1}>0$ such that if $U_{\varepsilon_{1}}$ is the $x$-component of $f^{-1}(\mathbb{B}(y_{0},\varepsilon_{1})),$ then $\overline{U_{\varepsilon_{1}}}$ is compactly contained in $\Omega.$
\end{cor}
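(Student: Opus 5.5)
The plan is to follow \cite[Lemma 4.1]{KO222}. The only analytic input is that $f^{-1}\{y_{0}\}$ is totally disconnected, and in fact has $\mathcal{H}^{1}$-measure zero. This comes from Theorem \ref{101209sdc}, once its hypotheses are checked. Fix $x\in f^{-1}\{y_{0}\}$; if $f(x)\neq y_{0}$ the construction is only easier, since we may take $\varepsilon_{1}<\abs{f(x)-y_{0}}/2$ after a small shift. Choose a small ball $B=B(x,r)$ with $\overline{B}\subset\Omega$ compact.

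To apply Theorem \ref{101209sdc} on $B$, we must produce $(p',q)\in A(p',q,n)$ with $K\in L^{p'}(B)$ and $\Sigma/K\in L^{q}(B)$. Since $K\geq1$, we have $\Sigma/K\leq\Sigma\in L^{1+\varepsilon}$, so we take $q=1+\varepsilon$. The constraint $1/p'+1/q<1$ then requires $p'>(1+\varepsilon)/\varepsilon$. This may exceed the given $p$, which is the first obstacle. I would handle it as follows. For the conclusion $\mathcal{H}^{1}(f^{-1}\{y_{0}\})=0$, the proof of Theorem \ref{th11000} only needs $q_{1}=pn/(1+p)>n-1$, and this holds exactly when $p>n-1$. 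The $\Sigma/K$ term there is only used through $\int \Sigma/K<\infty$, which needs nothing beyond $L^{1}$. So I would rerun that argument with $q=1$ in effect. Alternatively one can invoke the theorem as stated after noting that this exponent condition is inessential. Either way we get that $E:=f^{-1}\{y_{0}\}\cap\overline{B}$ is a compact set with $\mathcal{H}^{1}(E)=0$.

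Now comes the topological step. Because $\mathcal{H}^{1}(E)=0$ and $\mathcal{S}$ is a locally connected generalized manifold, $E$ does not separate small balls in a way that blocks the construction. More precisely, the zero $1$-measure lets us choose $r'\in(0,r)$ such that the metric sphere $S(x,r')$ misses $E$. Indeed, the map $z\mapsto\abs{z-x}$ is $1$-Lipschitz, so the set of radii $\{\abs{z-x}:z\in E\}$ has $\mathcal{H}^{1}$-measure zero in $\mathbb{R}$; we pick $r'$ outside it. Then $f$ is continuous and nonvanishing minus $y_{0}$ on the compact set $S(x,r')$, so
$$\varepsilon_{1}:=\min_{S(x,r')}\abs{f-y_{0}}>0.$$
Here we need $S(x,r')\neq\emptyset$, which holds because $\mathcal{S}$ is connected and $\Omega$ is not contained in $B(x,r')$ for small $r'$.

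Finally, let $U_{\varepsilon_{1}}$ be the $x$-component of $f^{-1}(\mathbb{B}(y_{0},\varepsilon_{1}))$. This set is connected, contains $x$, and does not meet $S(x,r')$. Hence it lies in $B(x,r')$, since otherwise it would be disconnected by the open sets $B(x,r')$ and $\Omega\setminus\overline{B}(x,r')$. Its closure is therefore contained in $\overline{B}(x,r')\subset B\Subset\Omega$.

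The main obstacle I expect is the first step, reconciling the integrability hypotheses ($p>n-1$, $\Sigma\in L^{1+\varepsilon}$) with the exponent set $A(p,q,n)$ required by Theorem \ref{101209sdc}. I would address it as above, by observing that only $\Sigma/K\in L^{1}_{loc}$ and $q_{1}>n-1$ are actually used. A secondary point is that $f$ must be continuous, which is assumed in the statement.
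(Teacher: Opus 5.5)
Your proof is correct. Its analytic input is the same as the paper's, but the topological step is different. The paper deduces Corollary \ref{9981} from the total disconnectedness in Theorem \ref{101209sdc} and then defers to \cite[Lemma 4.1]{KO222}. That argument needs only that $f^{-1}\{y_0\}$ is totally disconnected: a compact totally disconnected set is zero-dimensional, so $x$ has a small open $V\Subset\Omega$ with $\partial V\cap f^{-1}\{y_0\}=\emptyset$, and one takes $\varepsilon_1=\min_{\partial V}\abs{f-y_0}$.

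You replace this with a metric argument. You use the stronger fact $\mathcal{H}^1(E)=0$, which is what the proof of Theorem \ref{101209sdc} actually produces, and push it through the $1$-Lipschitz map $z\mapsto\abs{z-x}$. This gives an actual metric sphere $S(x,r')$ disjoint from $E$ for almost every small $r'$. The argument is shorter and self-contained, and it yields $\overline{U_{\varepsilon_1}}\subset\{\abs{z-x}\le r'\}$ with $r'$ as small as you like. Read your $\overline{B}(x,r')$ as this closed ball, which in $\mathcal{S}$ can be larger than the closure of $B(x,r')$.

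The cost is generality. A compact totally disconnected set can meet every sphere $S(x,r')$ with $0<r'<r$. So your route does not work for an arbitrary continuous map with totally disconnected fibre, which is the sense in which the paper calls these corollaries topological in nature.

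Your reconciliation of the hypotheses adds something the paper omits. The assumptions $p>n-1$ and $\Sigma\in L^{1+\varepsilon}_{loc}$ do not imply $(p,q)\in A(p,q,n)$, and the paper never addresses this. Your observation is what justifies invoking Theorem \ref{101209sdc}: its proof (via Theorem \ref{th11000}) uses $\Sigma/K$ only through $\int\Sigma/K<\infty$, and uses $K$ only through $K\in L^{p}_{loc}$ with $q_1=pn/(1+p)>n-1$.

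One caveat remains. $A(p,q,n)$ and Lemma \ref{101209sdsdcsdcc} as quoted also require $n>2$, and your reduction does not remove that. For $n=2$, both your argument and the paper's depend on that lemma holding for $1<q_1<2$.
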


\begin{cor}\label{sjslx}
Let $\Omega\subset\mathcal{S}$ be a domain and $y_{0}\in\mathbb{R}^{n}$. Suppose that an nonconstant mapping  $f$ belongs to  $N_{loc}^{1,n}(\Omega,\mathbb{R}^{n})\cap \mathcal{C}(\Omega,\mathbb{R}^{n})$ and satisfies the following inequality
\begin{equation}
\| {\rm ap}Df(x)\|^{n}\leq K(x)J_{f}(x)+\Sigma(x)\abs{f(x)-y_{0}}^{n}
\end{equation}
for almost everywhere $x\in\Omega,$ where $K\in L_{loc}^{p}(\Omega)$ for some $p>n-1$ and $\Sigma\in L_{loc}^{1+\varepsilon}(\Omega)$ for some $\varepsilon>0.$   Then  all every two different points $x_{1},x_{2}\in f^{-1}\{y_{0}\},$ there exists $\varepsilon_{0}>0,$ such that $x_{1}$ and $x_{2}$ are in different components of $f^{-1}(\mathbb{B}(y_{0},\varepsilon_{0})).$
\end{cor}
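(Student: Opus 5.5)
The statement to prove is Corollary \ref{sjslx}: two distinct points $x_{1}\neq x_{2}$ of $f^{-1}\{y_{0}\}$ are separated by the components of $f^{-1}(\mathbb{B}(y_{0},\varepsilon_{0}))$ for some $\varepsilon_{0}>0$. The plan is to follow \cite[Lemma 4.2]{KO222}. The only analytic input is that $f^{-1}\{y_{0}\}$ is totally disconnected, supplied by Theorem \ref{101209sdc}, together with Corollary \ref{9981}. The rest is point-set topology in the locally compact, locally connected, separable metric space $\Omega$.

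\textbf{Step 1: set up a nested family.} By Corollary \ref{9981} we may fix $\varepsilon_{1}>0$ so that the $x_{1}$-component $U_{\varepsilon}$ of $f^{-1}(\mathbb{B}(y_{0},\varepsilon))$ is relatively compact in $\Omega$ for all $\varepsilon\le\varepsilon_{1}$. The sets $U_{\varepsilon}$ decrease as $\varepsilon$ decreases, and their closures $\overline{U_{\varepsilon}}$ form a nested family of nonempty connected compacta. Hence $C:=\bigcap_{\varepsilon}\overline{U_{\varepsilon}}$ is a nonempty compact connected set, being a decreasing intersection of continua. Since $f$ is continuous and $\overline{U_{\varepsilon}}\subset f^{-1}(\overline{\mathbb{B}}(y_{0},\varepsilon))$, we get $C\subset f^{-1}\{y_{0}\}$.

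\textbf{Step 2: argue by contradiction.} Suppose $x_{2}\in U_{\varepsilon}$ for every $\varepsilon\in(0,\varepsilon_{1}]$. Then $x_{1},x_{2}\in C$, so $C$ is a connected subset of $f^{-1}\{y_{0}\}$ containing two distinct points. This contradicts total disconnectedness, so some $\varepsilon_{0}$ puts $x_{2}$ outside $U_{\varepsilon_{0}}$, i.e.\ in a different component.

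\textbf{Hypotheses.} Theorem \ref{101209sdc} as stated requires $K\in L^{p}_{loc}$ and $\Sigma/K\in L^{q}_{loc}$ with $(p,q)\in A(p,q,n)$. Since $K\ge 1$, we have $\Sigma/K\le\Sigma\in L^{1+\varepsilon}_{loc}$, so we can take $q=1+\varepsilon$. If $1/p+1/q<1$ fails, we shrink the exponent: replacing $\varepsilon$ by something smaller is not helpful, since we need large $q$. Instead we note that the proof of Theorem \ref{th11000} only uses $\Sigma/K\in L^{1}_{loc}$. I expect this bookkeeping to be the main obstacle. Concretely, the plan is to check that Theorem \ref{th11000} and hence Theorem \ref{101209sdc} hold with just $p>n-1$ and $\Sigma\in L^{1}_{loc}$, since only $\int\Sigma/K<\infty$ enters there. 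Then apply them directly.

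The topological step of using connectedness of a nested intersection of continua is standard. It needs the compactness of $\overline{U_{\varepsilon}}$ from Corollary \ref{9981}, which is why that corollary is invoked first.
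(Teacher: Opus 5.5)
Your argument is correct and follows the paper's route. The paper only defers to \cite[Lemma 4.2]{KO222}: total disconnectedness of $f^{-1}\{y_{0}\}$ from Theorem \ref{101209sdc}, plus the compact containment of Corollary \ref{9981}, followed by a purely topological argument of the kind you give with the nested continua $\overline{U_{\varepsilon}}$. Your handling of the hypotheses is more careful than the paper's, which never reconciles $K\in L^{p}_{loc}$ ($p>n-1$) and $\Sigma\in L^{1+\varepsilon}_{loc}$ with the requirement $(p,q)\in A(p,q,n)$: you are right that the proof of Theorem \ref{th11000} uses only $\int\Sigma/K<\infty$ together with $K^{n-1},K^{p}\in L^{1}_{loc}$. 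Note, however, that Theorem \ref{101209sdc} and Lemma \ref{101209sdsdcsdcc} still carry the restriction $n>2$, so your proof, like the paper's, does not cover $n=2$.
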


\begin{cor}\label{sxijsxsx}
Let $\Omega\subset\mathcal{S}$ be a domain and $y_{0}\in\mathbb{R}^{n}$. Suppose that an nonconstant mapping  $f$ belongs to  $N_{loc}^{1,n}(\Omega,\mathbb{R}^{n})\cap \mathcal{C}(\Omega,\mathbb{R}^{n})$ and satisfies the following inequality
\begin{equation}
\| {\rm ap}Df(x)\|^{n}\leq K(x)J_{f}(x)+\Sigma(x)\abs{f(x)-y_{0}}^{n}
\end{equation}
for almost everywhere $x\in\Omega,$ where $K\in L_{loc}^{p}(\Omega)$ for some $p>n-1$ and $\Sigma\in L_{loc}^{1+\varepsilon}(\Omega)$ for some $\varepsilon>0.$ Let $x_{0}\in f^{-1}\{y_{0}\},$  and for all $\varepsilon>0,$ $U_{\varepsilon}$ be the $x_{0}$-component of $f^{-1}(\mathbb{B}(y_{0},\varepsilon_{0})).$ Then, $\lim_{\varepsilon\rightarrow0^{+}}\mathcal{H}^{n}(U_{\varepsilon})=0.$
\end{cor}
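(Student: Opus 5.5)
The claim is that $\mathcal{H}^{n}(U_{\varepsilon})\to0$ as $\varepsilon\to0^{+}$. I would follow \cite[Lemma 4.3]{KO222} and use only topology plus the total disconnectedness of $f^{-1}\{y_{0}\}$ from Theorem \ref{101209sdc}. The idea is that the $x_{0}$-components shrink to the single point $x_{0}$, and Ahlfors regularity then controls their measure.

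\emph{Monotonicity and compactness.} The first step is to observe that for $0<\varepsilon'<\varepsilon$ we have $U_{\varepsilon'}\subset U_{\varepsilon}$. Indeed, $U_{\varepsilon'}$ is a connected subset of $f^{-1}(\mathbb{B}(y_{0},\varepsilon))$ containing $x_{0}$. By Corollary \ref{9981} there is $\varepsilon_{1}>0$ such that $\overline{U_{\varepsilon_{1}}}$ is compact and contained in $\Omega$. Hence for $\varepsilon\le\varepsilon_{1}$ all the sets $\overline{U_{\varepsilon}}$ are compact, nested, and contained in the fixed compact set $\overline{U_{\varepsilon_{1}}}$. In particular $\mathcal{H}^{n}(U_{\varepsilon})<\infty$ for these $\varepsilon$.

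\emph{Identifying the limit.} Set $E=\bigcap_{0<\varepsilon\le\varepsilon_{1}}\overline{U_{\varepsilon}}$. This is a decreasing intersection of compact connected sets, so $E$ is a nonempty compact connected set containing $x_{0}$; this is the standard continuum fact. By continuity of $f$, every $x\in\overline{U_{\varepsilon}}$ satisfies $|f(x)-y_{0}|\le\varepsilon$. Therefore $E\subset f^{-1}\{y_{0}\}$. Theorem \ref{101209sdc} applies under the standing integrability hypotheses, which are arranged as in the corollary. It says $f^{-1}\{y_{0}\}$ is totally disconnected, and since $E$ is connected this forces $E=\{x_{0}\}$. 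Alternatively, Corollary \ref{sjslx} gives the same conclusion directly: any second point $x_{2}\in E$ would lie in the $x_{0}$-component of $f^{-1}(\mathbb{B}(y_{0},\varepsilon))$ for every $\varepsilon$, which is a contradiction.

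\emph{Conclusion.} Since the compact sets $\overline{U_{\varepsilon}}$ decrease to $\{x_{0}\}$, a compactness argument gives ${\rm diam}\,U_{\varepsilon}\to0$. So for any $r>0$ we have $U_{\varepsilon}\subset B(x_{0},r)$ for all small $\varepsilon$. Ahlfors $n$-regularity of $\mathcal{S}$ then yields $\mathcal{H}^{n}(U_{\varepsilon})\le\mathcal{H}^{n}(B(x_{0},r))\le Cr^{n}$, and letting $r\to0$ finishes the proof. Alternatively, continuity from above of the finite measure $\mathcal{H}^{n}$ on $\overline{U_{\varepsilon_{1}}}$ gives the same limit, using $\mathcal{H}^{n}(\{x_{0}\})=0$.

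The main obstacle is the step $E\subset f^{-1}\{y_{0}\}$ combined with the compact containment supplied by Corollary \ref{9981}. Without compactness of $\overline{U_{\varepsilon}}$ in $\Omega$, the components could escape to $\partial\Omega$, and the intersection argument would fail.
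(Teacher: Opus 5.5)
Your argument is correct and is essentially the paper's route. The paper gives no independent proof but defers to \cite[Lemma 4.3]{KO222}, i.e.\ exactly this topological argument: the compact nested components from Corollary \ref{9981} intersect in a continuum inside the totally disconnected set $f^{-1}\{y_{0}\}$, so the intersection is $\{x_{0}\}$ and the measures shrink.

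One caveat: Theorem \ref{101209sdc} as stated requires $\Sigma/K\in L^{q}_{loc}$ with $(p,q)\in A(p,q,n)$. This does not follow from $K\in L^{p}_{loc}$, $p>n-1$, and $\Sigma\in L^{1+\varepsilon}_{loc}$, since one would need $1/p+1/(1+\varepsilon)<1$ and $n>2$. So your remark that the hypotheses ``are arranged as in the corollary'' is inaccurate, and your alternative citation of Corollary \ref{sjslx}, stated under identical hypotheses, is the cleaner one. The paper itself glosses over this same mismatch.
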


\subsection{Corollary II: A Jacobian-degree formula for values of finite distortion} 
The following Jacobian-degree formula for Newtonian-Sobolev mapping is similar to the one obtained by Kangasniemi and  Onninen \cite{KO222} on Euclidean space. 
\begin{lem}\label{lomlm}
Let  $\Omega\subset\mathcal{S}$ be a domain and $y_{0}\in\mathbb{R}^{n}$. Suppose that an nonconstant mapping $f$ belongs to  $N_{loc}^{1,n}(\Omega,\mathbb{R}^{n})\cap \mathcal{C}(\Omega,\mathbb{R}^{n})$ and has Lusin's condition (N). If $f^{-1}\{y_{0}\}$ is totally disconnected, then there $\varepsilon_{1}>0,$ such that $U_{\varepsilon_{1}}\subset\subset\Omega$ is a connected component of $f^{-1}(\mathbb{B}(y_{0},\varepsilon_{1}))$ and satisfies the following Jacobian-degree formula
\begin{equation}
{\rm deg}(f,U_{\varepsilon_{1}})=\frac{1}{\omega_{n}\varepsilon_{1}^{n}}\int_{U_{\varepsilon_{1}}}J_{f}d\mathcal{H}^{n}.
\end{equation}
\end{lem}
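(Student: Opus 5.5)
We follow the Euclidean argument of Kangasniemi and Onninen and reduce everything to the degree theory of generalized manifolds together with the area formula on the rectifiable set $\mathcal{S}$. Fix $x\in f^{-1}\{y_{0}\}$.

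\emph{Step 1: choice of $\varepsilon_{1}$.} Since $f^{-1}\{y_{0}\}$ is totally disconnected and $f$ is continuous and nonconstant, the topological argument of Corollary \ref{9981} (following \cite[Lemma 4.1]{KO222}) applies. It gives $\varepsilon_{1}>0$ such that the $x$-component $U_{\varepsilon_{1}}$ of $f^{-1}(\mathbb{B}(y_{0},\varepsilon_{1}))$ satisfies $\overline{U_{\varepsilon_{1}}}\subset\subset\Omega$. As a component of an open set in a locally connected space, $U_{\varepsilon_{1}}$ is open. By maximality of the component and continuity of $f$, we have $f(\partial U_{\varepsilon_{1}})\subset\partial\mathbb{B}(y_{0},\varepsilon_{1})$. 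Hence $\mathbb{B}(y_{0},\varepsilon_{1})$ is contained in a single component of $\mathbb{R}^{n}\setminus f(\partial U_{\varepsilon_{1}})$, so $y\mapsto{\rm deg}(f,U_{\varepsilon_{1}},y)$ is constant on the ball. We call this constant ${\rm deg}(f,U_{\varepsilon_{1}})$.

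\emph{Step 2: averaging identity.} Take a radial test function $\psi\in C_{c}^{\infty}(\mathbb{B}(y_{0},\varepsilon_{1}))$ with $\int_{\mathbb{R}^{n}}\psi=1$. We aim to prove
$$\int_{U_{\varepsilon_{1}}}\psi(f)J_{f}\,d\mathcal{H}^{n}={\rm deg}(f,U_{\varepsilon_{1}})\int\psi\,dy={\rm deg}(f,U_{\varepsilon_{1}}).$$
The route is the area formula for $f$ restricted to the $n$-rectifiable set $U_{\varepsilon_{1}}$, using Federer's approximate differentiability \cite{Fed69} and Lusin's condition (N). This gives
$$\int_{U}\psi(f)J_{f}=\int_{\mathbb{R}^{n}}\psi(y)\sum_{z\in f^{-1}(y)\cap U}{\rm sign}\,J_{f}(z)\,dy .$$
We then need the identity $\sum_{z\in f^{-1}(y)\cap U}{\rm sign}\,J_{f}(z)={\rm deg}(f,U,y)$ for a.e. $y$, and this is where the metric orientation of $TU$ is used. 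The alternative is to use Lemma \ref{Kir16} to show that $\int\psi(f)J_{f}$ depends only on the homotopy class of $f|_{\partial U}$. Then approximate $f$ by Lipschitz maps $f_{k}\to f$ in $N^{1,n}$ and uniformly on $\overline U$. For these maps the degree formula follows from \cite{HS02,Kir14}, and we pass to the limit using $J_{f_{k}}\to J_{f}$ in $L^{1}$ (the Jacobian is an $n$-linear expression in $apDf$). Since ${\rm deg}(f_{k},U,y)={\rm deg}(f,U,y)$ for $y$ in a slightly smaller ball once $\|f_{k}-f\|_{\infty}$ is small, the identity survives the limit.

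\emph{Step 3: removing $\psi$.} Let $\psi\uparrow\omega_{n}^{-1}\varepsilon_{1}^{-n}\chi_{\mathbb{B}(y_{0},\varepsilon_{1})}$. On $U_{\varepsilon_{1}}$ we have $f\in\mathbb{B}(y_{0},\varepsilon_{1})$ and $J_{f}\in L^{1}(U)$ because $\|apDf\|^{n}\in L^{1}$. Dominated convergence then gives
$${\rm deg}(f,U_{\varepsilon_{1}})=\frac{1}{\omega_{n}\varepsilon_{1}^{n}}\int_{U_{\varepsilon_{1}}}J_{f}\,d\mathcal{H}^{n}.$$

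The main obstacle is the identification in Step 2 of the sign-counting function (or the Lipschitz degree formula) with the cohomological degree ${\rm deg}(f,U,y)$ on $\mathcal{S}$. This is exactly where metric orientability and the diagram (\ref{djaihd}) enter, so we would cite the corresponding result of \cite{HS02,Kir14} for Lipschitz maps and carry out only the approximation argument ourselves.
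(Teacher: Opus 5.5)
Your proposal is correct and follows essentially the same route as the paper: choose $\varepsilon_1$ by the topological argument of Corollary \ref{9981}, use Kirsil\"a's degree and area formulas for Lipschitz maps, and pass to $f$ via Lipschitz approximations $f_k\to f$ uniformly on $\overline{U_{\varepsilon_1}}$, using $\int J_{f_k}\to\int J_f$ and homotopy invariance of the degree (your route (a) through Lusin's condition (N) and sign-counting is then unnecessary). Your localization by $\psi$ supported in a smaller ball makes the argument more rigorous than the paper's: ${\rm deg}(y,f_k,U_{\varepsilon_1})={\rm deg}(y,f,U_{\varepsilon_1})$ is only guaranteed for $y$ farther than $\|f_k-f\|_\infty$ from $\partial\mathbb{B}(y_0,\varepsilon_1)$, which the paper's direct limit of $\int_{\mathbb{R}^n}{\rm deg}(y,f_j,U_{\varepsilon_1})\,dy$ glosses over; in Step 3 you should use the unnormalized identity $\int_{U_{\varepsilon_1}}\psi(f)J_f={\rm deg}(f,U_{\varepsilon_1})\int\psi\,dy$, because an increasing sequence $\psi\uparrow\omega_n^{-1}\varepsilon_1^{-n}\chi_{\mathbb{B}(y_0,\varepsilon_1)}$ cannot keep $\int\psi=1$.
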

\begin{proof}
According to Corollary \ref{9981}, we can chose a $\varepsilon_{1}>0$  such that $U_{\varepsilon_{1}}\subset\subset\Omega$ is a connected component of $f^{-1}(\mathbb{B}(y_{0},\varepsilon_{1}))$  satisfying that $f\in N^{1,n}(U_{\varepsilon_{1}},\mathbb{R}^{n})\cap \mathcal{C}(U_{\varepsilon_{1}},\mathbb{R}^{n})$ and having the Lusin's condition (N) on $U_{\varepsilon_{1}}.$

We first consider that $f$ is Lipschitz map. Then,  by \cite[Lemma 3.5]{Kir16}, we obtain that
$${\rm deg}(y,f,U_{\varepsilon_{1}})=\sum_{x\in f^{-1}(y)\cap U_{\varepsilon_{1}}}{\rm sgn}J_{f}(x)$$
for $\mathcal{H}^{n}$-almost every $y\in \mathbb{R}^{n}\setminus f(\partial U_{\varepsilon_{1}}).$ Combing this with coarea formula \cite[Theorem 3.3]{Kir16}, we obtain
\begin{equation}\label{shij}
\int_{U_{\varepsilon_{1}}}J_{f}(x)d\mathcal{H}^{n}(x)=\int_{\mathbb{R}^{n}}{\rm deg}(y,f,U_{\varepsilon_{1}})d\mathcal{H}^{n}(y).
\end{equation}
Next, by  \cite[Lemma 4.3 ]{Kir14}, we can select a sequence of locally Lipschitz maps $\{f_{j}\}_{j=1}^{\infty},$ such that $f_{j}\rightarrow f$ uniformly on compact sets. 
 Thus, for sufficiently large indices $j$ there is a homotopy between $f_{j}$ and $f,$ such that ${\rm deg}(y,f_{j},U_{\varepsilon_{1}})={\rm deg}(y,f,U_{\varepsilon_{1}}).$
 In addition, according to \cite[Corollary 2.31]{Kir14}, we see that $$\int_{U_{\varepsilon_{1}}}J_{f_{j}}d\mathcal{H}^{n}\rightarrow \int_{U_{\varepsilon_{1}}}J_{f}d\mathcal{H}^{n}\;\;{\rm as}\;\;j\rightarrow\infty.$$
 So we get 
 \begin{equation}\label{ssvsvshij}
\int_{U_{\varepsilon_{1}}}J_{f}(x)d\mathcal{H}^{n}(x)=\int_{\mathbb{R}^{n}}{\rm deg}(y,f,U_{\varepsilon_{1}})d\mathcal{H}^{n}(y)
\end{equation}
when $f\in N^{1,n}(U_{\varepsilon_{1}},\mathbb{R}^{n})\cap \mathcal{C}(U_{\varepsilon_{1}},\mathbb{R}^{n}).$ Now, from  \cite[Lemma 2.7]{Kir16} we deduce that 
$$\int_{U_{\varepsilon_{1}}}J_{f}(x)d\mathcal{H}^{n}(x)=\int_{\mathbb{B}(y_{0},\varepsilon_{1})}{\rm deg}(y,f,U_{\varepsilon_{1}})d\mathcal{H}^{n}(y)=\omega_{n}\varepsilon_{1}^{n}{\rm deg}(f,U_{\varepsilon_{1}}).$$
This finishes the proof of Lemma \ref{lomlm}.
\end{proof}
According to Proposition \ref{porhdjxja} and Lemma \ref{lomlm}, we can directly deduce the following Jacobian-degree formula for values of finite distortion. 
\begin{lem}\label{lomlmass}
Let  $\Omega\subset\mathcal{S}$ be a domain and $y_{0}\in\mathbb{R}^{n}$.  Suppose that $f$ belongs to  $N_{loc}^{1,n}(\mathcal{S},\mathbb{R}^{n})$ and satisfies the following inequality
\begin{equation}
\| {\rm ap}Df(x)\|^{n}\leq K(x)J_{f}(x)+\Sigma(x)\abs{f(x)-y_{0}}^{n}
\end{equation}
for almost everywhere $x\in\mathcal{S},$ where $K\in L_{loc}^{p}(\mathcal{S})$ for some $p>n$ and  $\Sigma\in L_{loc}^{1+\varepsilon}(\mathcal{S})$ for some $\varepsilon>0.$  Suppose additionally that $U_{\varepsilon_{1}}$ is a connected component of $f^{-1}(\mathbb{B}(y_{0},\varepsilon_{1})),$ which $\overline{U_{\varepsilon_{1}}}$ is compactly contained in $\Omega,$  then we have
\begin{equation}\label{shdhjsa}
{\rm deg}(f,U_{\varepsilon_{1}})=\frac{1}{\omega_{n}\varepsilon_{1}^{n}}\int_{U_{\varepsilon_{1}}}J_{f}d\mathcal{H}^{n}.
\end{equation}
\end{lem}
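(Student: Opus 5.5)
The plan is to reduce Lemma \ref{lomlmass} to Lemma \ref{lomlm} by checking its hypotheses: continuity, Lusin's condition (N), total disconnectedness of $f^{-1}\{y_{0}\}$, and that the chosen component is compactly contained in $\Omega$. The last point is given directly as an assumption in the statement.

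First, $K\in L_{loc}^{p}$ with $p>n$ and $\Sigma\in L_{loc}^{1+\varepsilon}$. So Proposition \ref{dahdujkj} shows that $f$ (after choosing the continuous representative) is locally H\"{o}lder continuous, hence $f\in N_{loc}^{1,n}\cap\mathcal{C}$. Proposition \ref{dahdujkjdfvs} (equivalently, Proposition \ref{porhdjxja} applied with the right-hand side $\Sigma\abs{f-y_{0}}^{n}$, which lies in $L_{loc}^{Q}$ for some $Q>1$ by the H\"{o}lder/Trudinger argument in the proof of Proposition \ref{dahdujkj}) then gives Lusin's condition (N) locally. In particular (N) holds on a neighbourhood of $\overline{U_{\varepsilon_{1}}}$, and that is all the proof of Lemma \ref{lomlm} uses.

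Next I will look at how Lemma \ref{lomlm} actually uses total disconnectedness. It enters only to produce, via Corollary \ref{9981}, a component $U_{\varepsilon_{1}}$ with compact closure in $\Omega$. Here this is assumed outright, so I will rerun the degree argument of Lemma \ref{lomlm} directly on the given $U_{\varepsilon_{1}}$ and not appeal to Theorem \ref{101209sdc}. I avoid that theorem because it would require $\Sigma/K\in L_{loc}^{q}$ with $(p,q)\in A(p,q,n)$; since $K\geq1$ this follows from $\Sigma\in L^{1+\varepsilon}$ only when $n>2$, so it is not needed and does not hold in general. The argument on $U_{\varepsilon_{1}}$ runs as follows.
\begin{enumerate}
\item Approximate $f$ uniformly on $\overline{U_{\varepsilon_{1}}}$ by locally Lipschitz maps $f_{j}$ \cite[Lemma 4.3]{Kir14}.
\item For Lipschitz maps, combine the degree formula \cite[Lemma 3.5]{Kir16} with the coarea formula \cite[Theorem 3.3]{Kir16} to get $\int_{U}J_{f_{j}}=\int_{\mathbb{R}^{n}}{\rm deg}(y,f_{j},U)\,dy$.
\item Pass to the limit in the left-hand side using \cite[Corollary 2.31]{Kir14}.
\item Pass to the limit in the degree by homotopy invariance. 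This needs $f(\partial U_{\varepsilon_{1}})$ to stay away from the relevant $y$.
\end{enumerate}

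Since $U_{\varepsilon_{1}}$ is a component of the open set $f^{-1}(\mathbb{B}(y_{0},\varepsilon_{1}))$ with compact closure in $\Omega$, we have $f(\partial U_{\varepsilon_{1}})\subset\partial\mathbb{B}(y_{0},\varepsilon_{1})$. Therefore ${\rm deg}(y,f,U_{\varepsilon_{1}})$ is constant on the ball and zero outside $\overline{\mathbb{B}(y_{0},\varepsilon_{1})}$, apart from the null set $\partial\mathbb{B}$. This gives \eqref{shdhjsa}.

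The main obstacle is the limit in the degree. For $y$ near the sphere, $f_{j}$ may not avoid $y$ on $\partial U$. I would handle this by fixing $\delta>0$ and using, for $j$ large, the degree identity for $y$ with $\operatorname{dist}(y,\partial\mathbb{B})>\delta$. The Lipschitz identity is applied in its localized form $\int_{U}\psi(f_{j})J_{f_{j}}=\int\psi(y)\,{\rm deg}(y,f_{j},U)\,dy$, with $\psi$ compactly supported in $\mathbb{R}^{n}\setminus f(\partial U)$. One then passes $j\to\infty$ (using uniform convergence together with the $L^{1}$ convergence of the Jacobians) and finally lets $\psi\uparrow\chi_{\mathbb{B}}$ by monotone convergence. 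In this last step Lusin (N) and Lemma \ref{Kir16}-type integrability ensure that $J_{f}$ vanishes a.e. on $f^{-1}(\partial\mathbb{B})\cap U$, which is needed to drop the boundary contribution.
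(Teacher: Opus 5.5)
Your proposal is correct and follows the paper's route: the paper's one-line proof combines Proposition \ref{porhdjxja} with Lemma \ref{lomlm}, whose proof is the same Lipschitz-approximation, area-formula and homotopy argument you rerun, and you improve on it in two places. You rightly drop the total-disconnectedness hypothesis of Lemma \ref{lomlm}, which the paper does not verify under these assumptions, and you replace the paper's $y$-by-$y$ comparison $\deg(y,f_j,U_{\varepsilon_1})=\deg(y,f,U_{\varepsilon_1})$ near the sphere with the localized identity for $\psi$ supported in $\mathbb{B}(y_0,\varepsilon_1-\delta)$. One simplification: since $U_{\varepsilon_1}\subset f^{-1}(\mathbb{B}(y_0,\varepsilon_1))$, the set $f^{-1}(\partial\mathbb{B}(y_0,\varepsilon_1))\cap U_{\varepsilon_1}$ is empty, so the final step $\psi\uparrow\chi_{\mathbb{B}}$ is plain dominated convergence with no boundary term (monotone convergence does not apply directly, since $J_f$ is signed), and your appeal to Lusin (N) there is unnecessary.
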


\section{The proof of Reshetnyak's theorem for quasiregular values}\label{schjcsjb}
The main purpose of this section is to provide the proof of Reshetnyak's theorem for quasiregular values from generalized $n$-manifold with controlled geometry to Euclidean space, and our proof follows a similar line to the one given by Kangasniemi and  Onninen \cite{KO222}. Just as done by Kangasniemi and  Onninen in \cite[Lemma 5.2]{KO222}, we first give the following result concerning the non-negative degree on the components of $f^{-1}(\mathbb{B}(y_{0},\varepsilon_{1})).$  The key tool to prove Lemma \ref{ahkkdjk} is the Jacobian-degree formula in Lemma \ref{lomlmass}. Its proof is similar to that of \cite[Lemma 5.2]{KO222}, and is therefore omitted here.
\begin{lem}\label{ahkkdjk}
Suppose that $\Omega\subset\mathcal{S}$ is a bounded connected domain and $y_{0}\in\mathbb{R}^{n}$. Suppose that a nonconstant continuous map $f\in N_{loc}^{1,n}(\Omega,\mathbb{R}^{n})$ 
 satisfies the following inequality
\begin{equation}
\| {\rm ap}Df(x)\|^{n}\leq K(x)J_{f}(x)+\Sigma(x)\abs{f(x)-y_{0}}^{n}
\end{equation}
for almost everywhere $x\in\Omega,$ where $K\in L_{loc}^{p}(\mathcal{S})$ for some $p>n$ and  $\Sigma\in L_{loc}^{1+\varepsilon}(\mathcal{S})$ for some $\varepsilon>0.$   Suppose additionally that $U_{\varepsilon_{1}}$ is a connected component of $f^{-1}(\mathbb{B}(y_{0},\varepsilon_{1})),$ which $\overline{U_{\varepsilon_{1}}}$ is compactly contained in $\Omega.$ Then, there exists $C=C(n,K,\Sigma,\Omega)>0,$ such that 
${\rm deg}(f,U_{\varepsilon_{1}})\geq0$ if $\mathcal{H}^{n}(U_{\varepsilon_{1}})\leq C.$
\end{lem}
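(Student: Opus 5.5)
The plan is to follow \cite[Lemma 5.2]{KO222}: start from the Jacobian-degree formula of Lemma \ref{lomlmass} and bound the negative part of $\int_{U_{\varepsilon_{1}}}J_{f}\,d\mathcal{H}^{n}$ by something that is small when $\mathcal{H}^{n}(U_{\varepsilon_{1}})$ is small. Write $U=U_{\varepsilon_{1}}$. By Lemma \ref{lomlmass}, together with the continuity of $f$ from Proposition \ref{dahdujkj} and Lusin's condition (N) from Proposition \ref{dahdujkjdfvs},
\begin{equation*}
\omega_{n}\varepsilon_{1}^{n}\,{\rm deg}(f,U)=\int_{U}J_{f}\,d\mathcal{H}^{n}.
\end{equation*}
Since ${\rm deg}(f,U)$ is an integer, it suffices to show that ${\rm deg}(f,U)>-1$. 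Equivalently, we want
\begin{equation*}
\int_{U}J_{f}\,d\mathcal{H}^{n}>-\omega_{n}\varepsilon_{1}^{n}.
\end{equation*}

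The first step is to use the distortion inequality. On $U$ we have $\abs{f-y_{0}}<\varepsilon_{1}$. Because $K\geq1$ and $\|{\rm ap}Df\|^{n}\geq0$, the inequality gives almost everywhere
\begin{equation*}
J_{f}\geq-\frac{\Sigma}{K}\abs{f-y_{0}}^{n}\geq-\Sigma\,\varepsilon_{1}^{n}.
\end{equation*}
Integrating over $U$ and applying H\"{o}lder's inequality with exponent $1+\varepsilon$ gives
\begin{equation*}
\int_{U}J_{f}\,d\mathcal{H}^{n}\geq-\varepsilon_{1}^{n}\|\Sigma\|_{L^{1+\varepsilon}(\Omega')}\,\mathcal{H}^{n}(U)^{\frac{\varepsilon}{1+\varepsilon}}.
\end{equation*}
Here $\Omega'$ is a fixed compact neighbourhood containing $\overline{U}$. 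The key point is that the factor $\varepsilon_{1}^{n}$ appears on both sides, so it cancels.

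The second step is to choose the constant. Set
\begin{equation*}
C=\left(\frac{\omega_{n}}{2\|\Sigma\|_{L^{1+\varepsilon}(\Omega')}}\right)^{\frac{1+\varepsilon}{\varepsilon}},
\end{equation*}
and take $C$ arbitrary if $\Sigma=0$. Then $\mathcal{H}^{n}(U)\leq C$ gives
\begin{equation*}
{\rm deg}(f,U)\geq-\tfrac{1}{2}.
\end{equation*}
Since the degree is an integer, ${\rm deg}(f,U)\geq0$.

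The main obstacle is uniformity of $\Omega'$. The $L^{1+\varepsilon}$ norm must be taken over a set that does not depend on $U$. For this, I would use that $\Sigma\in L^{1+\varepsilon}_{loc}(\mathcal{S})$ holds on all of $\mathcal{S}$, not only on $\Omega$. Since $\Omega$ is bounded, $\overline{\Omega}$ is compact (by Ahlfors regularity and completeness of the closed set $\mathcal{S}$). One can therefore take $\Omega'=\overline{\Omega}$, which gives $C=C(n,\Sigma,\Omega)$.

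A second point to check is the hypotheses of Lemma \ref{lomlmass}. These need $f$ to satisfy the inequality on $\mathcal{S}$, while here $f$ is only given on $\Omega$. Lemma \ref{lomlmass} is local to $U\subset\subset\Omega$, so I would apply its proof on $\Omega$ directly, using local Lusin (N) there.
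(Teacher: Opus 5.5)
Your argument is correct and follows the route the paper intends. The paper omits the proof and refers to \cite[Lemma 5.2]{KO222}, whose argument is exactly yours: combine the Jacobian--degree formula of Lemma~\ref{lomlmass} with the pointwise bound $J_f\geq -\Sigma\,\varepsilon_1^{n}$ on $U_{\varepsilon_1}$, apply H\"older's inequality so the factor $\varepsilon_1^{n}$ cancels, and use that the degree is an integer. The one step resting on an unstated assumption is the compactness of $\overline{\Omega}$ (completeness of $\mathcal{S}$, implicit in the Poincar\'e-inequality framework the paper relies on); alternatively, reduce to $\Sigma\in L^{1+\varepsilon}(\Omega)$ as the paper does elsewhere and let $C$ depend on $\|\Sigma\|_{L^{1+\varepsilon}(\Omega)}$.
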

In \cite[Theorem 5.3]{Kir16}, Kirsil\"{a} showed that if a continuous map $f\in N_{loc}^{1,n}(\Omega,\mathbb{R}^{n})$ satisfies $J_{f}(x)\geq0$ for almost everywhere $x\in\Omega,$ then $f$ is sense-preserving; i.e. it holds  ${\rm deg}(y, f,D)>0$ for every $D\subset\subset\Omega$ and $y\in f(D)\setminus f(\partial D).$ However, this result in \cite[Theorem 5.3]{Kir16} is not available for our case.
This is because it allows the Jacobi determinant $J_{f}$ of a map $f$ with value of finite distortion at $y_{0}$ to be negative on its domain.  While, we still can show that there exists a small 
$\varepsilon_{1}>0,$ such that the degree of $f$ on $U_{\varepsilon_{1}}$ is always positive. Here,  $U_{\varepsilon_{1}}$ is a connected component of $f^{-1}(\mathbb{B}(y_{0},\varepsilon_{1})).$ 
This, in turn, tells us that the integral average of  $J_{f}$ over $U_{\varepsilon_{1}}$ is strictly positive. This is the following result which plays a key role in the proof of discreteness and locally positive index  for quasiregular values. The idea of the proof is based on \cite{KO222, Zho251}.
\begin{lem}\label{bgjxolohcn}
For a constant $K\geq1,$ we suppose that $\Omega\subset\mathcal{S}$ is a bounded connected domain and $y_{0}\in\mathbb{R}^{n}$. Suppose that a nonconstant continuous map $f\in N_{loc}^{1,n}(\Omega,\mathbb{R}^{n})$ 
 satisfies the following inequality
\begin{equation}\label{jjksxc}
\| {\rm ap}Df(x)\|^{n}\leq KJ_{f}(x)+\Sigma(x)\abs{f(x)-y_{0}}^{n}
\end{equation}
for almost everywhere $x\in\Omega,$ where  $\Sigma\in L_{loc}^{p}(\mathcal{S})$ for some $p>1.$   Suppose additionally that $U_{\varepsilon_{1}}$ is a connected component of $f^{-1}(\mathbb{B}(y_{0},\varepsilon_{1})),$ which $\overline{U_{\varepsilon_{1}}}$ is compactly contained in $\Omega.$ Then, we have
${\rm deg}(f,U_{\varepsilon_{1}})>0.$
\end{lem}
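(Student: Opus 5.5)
Write $U=U_{\varepsilon_1}$ and assume without loss of generality that $y_0=0$. The plan follows the Kangasniemi--Onninen argument. We use the Jacobian-degree formula of Lemma \ref{lomlmass}, applied with the constant function $K(x)\equiv K$, which lies in $L^{p'}_{loc}$ for every $p'>n$. For every $0<\varepsilon\le\varepsilon_1$ it gives
\[
\omega_n\varepsilon^n\,{\rm deg}(f,U)=\int_{U\cap f^{-1}(\mathbb{B}(0,\varepsilon))}J_f\,d\mathcal{H}^n .
\]
To justify this for smaller radii, note that ${\rm deg}(y,f,U)$ is constant on $\mathbb{B}(0,\varepsilon_1)$, since $f(\partial U)\subset\partial\mathbb{B}(0,\varepsilon_1)$. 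We can therefore use the formula in Lemma \ref{lomlm} with $\mathbb{B}(0,\varepsilon)$ in place of $\mathbb{B}(0,\varepsilon_1)$. The mapping satisfies Lusin's condition (N) by Proposition \ref{dahdujkjdfvs}, because $\Sigma\in L^{p}_{loc}$ with $p>1$.

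Since the degree is an integer, it suffices to rule out ${\rm deg}(f,U)\le 0$. Suppose ${\rm deg}(f,U)\le0$. Then
\[
\int_{U_\varepsilon} J_f\le 0\quad\text{for all }0<\varepsilon\le\varepsilon_1,\qquad U_\varepsilon:=U\cap f^{-1}(\mathbb{B}(0,\varepsilon)).
\]
Inserting this into \eqref{jjksxc} yields
\[
\int_{U_\varepsilon}\|{\rm ap}Df\|^n\le \int_{U_\varepsilon}\Sigma|f|^n.
\]
Next, integrate over the level parameter with the weight $\varepsilon^{-n-1}$, or equivalently use a layer-cake test function $\Phi(|f|)$ such as $|f|^{-n}$ truncated. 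The aim is the weighted estimate
\[
\int_U \frac{\|{\rm ap}Df\|^n}{|f|^n}\,\mathbf 1_{\{f\neq0\}}\lesssim \int_U\Sigma ,
\]
which shows that $\log|f|\in N^{1,n}(U\setminus f^{-1}(0))$ with finite energy. Finiteness is exactly what is needed here: if $f^{-1}(0)\cap U\neq\emptyset$, the function $\log(1/|f|)$ is unbounded near a zero of $f$. On the other hand, $U$ is relatively compact and connected and contains a zero of $f$, since it is a component of a preimage of a ball. This is where a contradiction should come from.

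The intended contradiction runs as follows. Set $v=\log(\varepsilon_1/|f|)^+$. Then $v$ vanishes on $\partial U$, equals $+\infty$ on the zero set, and has $\int_U g_v^n\lesssim\|\Sigma\|_{L^1(U)}<\infty$. Truncating $v$ at height $t$ gives a test function for the capacity of the condenser $(f^{-1}(0)\cap U,\;U)$. This capacity is positive, because $f^{-1}(0)$ is a nonempty set and $\mathcal{S}$ is Ahlfors $n$-regular. Balancing the $t$-dependence alone does not produce a contradiction, so the improved integrability $\Sigma\in L^p$ with $p>1$ must enter. Combining Hölder's inequality with the Sobolev--Poincaré inequality \eqref{asddfvddjaadlla} applied to $v$ (so that $v\in L^s$ for all $s$, via Trudinger), we get
\[
\int_{U_\varepsilon}\Sigma|f|^n \le \varepsilon^n\|\Sigma\|_{L^p(U_\varepsilon)}\,\mathcal{H}^n(U_\varepsilon)^{1-1/p}.
\]
The left-hand side, controlled by the Poincaré inequality against $\int\|{\rm ap}Df\|^n$, should then force $\int_{U_\varepsilon}\|{\rm ap}Df\|^n$ to vanish as $\varepsilon\to0$ faster than any nonconstant map allows. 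In that case $f$ is constant on a component, contradicting that $f$ is nonconstant. This contradiction would give ${\rm deg}(f,U)\ge1$.

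The main obstacle is making this last step rigorous: turning $\int_{U_\varepsilon}\|{\rm ap}Df\|^n\le\int_{U_\varepsilon}\Sigma|f|^n$ into the statement that $f$ is locally constant. My first attempt would be a reverse inequality on sublevel sets. Apply \eqref{asddfvddjaadlla} to $(|f|-\varepsilon)^-$, which has zero boundary values on $U_\varepsilon$ in the sense that it vanishes outside $U_\varepsilon$. This gives $\int_{U_\varepsilon}(\varepsilon-|f|)^n\Sigma$, which is small, times $\mathcal{H}^n(U_\varepsilon)^{\delta}$ with $\delta=1-1/p>0$. By Corollary \ref{sxijsxsx}, $\mathcal{H}^n(U_\varepsilon)\to0$, so for small $\varepsilon$ this absorbs the right-hand side and forces $\|{\rm ap}Df\|=0$ on $U_\varepsilon$. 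This contradicts the fact that $|f|$ climbs from $0$ to $\varepsilon$ inside $U_\varepsilon$.
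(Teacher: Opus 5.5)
Your reduction is sound, and in one respect cleaner than the paper's Part~I. You use that ${\rm deg}(y,f,U)$ is constant on $\mathbb{B}(0,\varepsilon_1)$ to get $\int_{U_\varepsilon}J_f=\omega_n\varepsilon^n{\rm deg}(f,U)\le 0$. You then integrate $\int_{U_\varepsilon}\|{\rm ap}Df\|^n\le\int_{U_\varepsilon}\Sigma|f|^n$ against $n\varepsilon^{-n-1}d\varepsilon$ to get $\int_U\|{\rm ap}Df\|^n(|f|^{-n}-\varepsilon_1^{-n})\le\int_U\Sigma$. This treats ${\rm deg}<0$ and ${\rm deg}=0$ together, so Lemma~\ref{ahkkdjk} is not needed.

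One correction to that part: $U$ need not contain a zero just because it is a component of $f^{-1}(\mathbb{B}(0,\varepsilon_1))$. Take $f=(2-\psi)e_1$ with $\psi\ge 0$ a Lipschitz bump of height $3/2$. Then $J_f\equiv 0$, $\Sigma=\|{\rm ap}Df\|^n/|f|^n$ is bounded, and the component $\{\psi>1\}$ has degree $0$. You must take $U$ to be the component of some $x_0\in f^{-1}\{0\}$, which is how the lemma is used later.

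The gap is the step that should produce the contradiction; neither mechanism you propose works.
\begin{itemize}
\item The condenser $(f^{-1}(0)\cap U,U)$ need not have positive $n$-capacity. In an Ahlfors $n$-regular space a point has zero $n$-capacity: test with $\min\{1,\log(R/|x-x_0|)/\log(R/r)\}^+$, whose $n$-energy is $\lesssim\log(R/r)^{1-n}$. This is exactly why finite $n$-energy of $\log|f|$ is compatible with $\log|f|=-\infty$ at a point; for example $-\log^{\alpha}(1/|x|)$ with $0<\alpha<1-1/n$ in $\mathbb{R}^n$.
\item The absorption argument compares the wrong quantities. The term to absorb is $\int_{U_\varepsilon}\Sigma|f|^n$. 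Since $|f|=\varepsilon$ on $\partial U_\varepsilon$, $|f|$ has no zero boundary values and its size is not controlled by $\int\|{\rm ap}Df\|^n$. The Sobolev inequality for $(\varepsilon-|f|)^+$ only bounds $\int_{U_\varepsilon}\Sigma(\varepsilon-|f|)^n$, which is small precisely where $|f|^n$ is large.
\end{itemize}
So nothing is absorbed, and $\|{\rm ap}Df\|=0$ on $U_\varepsilon$ does not follow.

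What is missing is the paper's Part~II: a \emph{local}, ball-wise reverse H\"older inequality for $G=\|{\rm ap}Df\|^n/|f|^n$. Take $\varphi(t)=(\log\sqrt t-c)\,t^{-n/2}$ in Lemma~\ref{shksujck}, suitably truncated. Then $n\varphi+2t\varphi'=t^{-n/2}$, so the logarithms cancel on the left and
\[
\Bigl|\int\eta\,\frac{J_f}{|f|^n}\,d\mathcal{H}^n\Bigr|\lesssim\frac1r\int_{B(x,2r)}\bigl|\log|f|-c\bigr|\,\frac{\|{\rm ap}Df\|^{n-1}}{|f|^{n-1}}\,d\mathcal{H}^n .
\]
Combine this with H\"older, the Sobolev--Poincar\'e inequality for $\log|f|$, and the distortion inequality. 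This bounds the average of $G$ over $B$ by the $\frac{n}{n+1}$-mean of $G$ over $2\lambda B$ plus the average of $\Sigma$. Gehring's lemma, which is where $\Sigma\in L^p$ with $p>1$ is genuinely used, then gives $G\in L^\beta_{loc}$ for some $\beta>1$. Hence $\log|f|\in N^{1,n\beta}_{loc}(U)$ with $n\beta>n$, so it is locally H\"older and bounded, contradicting $\log|f(x_0)|=-\infty$. Your first half combined with this argument gives a complete proof.
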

\begin{proof}
According to Lemma \ref{ahkkdjk}, there exists $\varepsilon_{1}>0$ such that $U_{\varepsilon_{1}}\subset\subset\Omega$ is a connected component of $f^{-1}(\mathbb{B}(y_{0},\varepsilon_{1}))$ satisfying that 
$\mathcal{H}^{n}(U_{\varepsilon_{1}})\leq C$ and thus ${\rm deg}(f,U_{\varepsilon_{1}})\geq0.$ Next, we will show that  ${\rm deg}(f,U_{\varepsilon_{1}})=0$ is impossible, and thus finishes the proof of 
Lemma \ref{bgjxolohcn}. We verify this by contradiction. We split the proof of this lemma into three parts.

$\mathbf{Part}\;\mathbf{I}$: Showing that if ${\rm deg}(f,U_{\varepsilon_{1}})=0,$ then we have 
$$\int_{U_{\varepsilon_{1}}}\frac{\|apDf\|^{n}}{\abs{f-y_{0}}^{n}}d\mathcal{H}^{n}<\infty\;\;\;\;{\rm and }\;\;\;\;\int_{U_{\varepsilon_{1}}}\frac{J_{f}}{\abs{f-y_{0}}^{n}}d\mathcal{H}^{n}=0.$$
We first show that if
${\rm deg}(f,U_{\varepsilon_{1}})=0,$ then  we have 
\begin{equation}\label{dwshiksz}
\int_{U_{\varepsilon_{1}}\cap f^{-1}(\mathbb{B}(y_{0},r))}J_{f}d\mathcal{H}^{n}=0
\end{equation}
for almost every $r\in(0,\varepsilon_{1}).$ Without loss of generality, we assume that $f\in N^{1,n}(U_{\varepsilon_{1}},\mathbb{R}^{n})$ and  $\Sigma\in L^{p}(\varepsilon_{1}).$
Let $J_{f}=J_{f}^{+}-J_{f}^{-}$ and $U_{r}=U_{\varepsilon_{1}}\cap f^{-1}(\mathbb{B}(y_{0},r)).$ Then, according to Lemma \ref{lomlmass}, we see that if ${\rm deg}(f,U_{\varepsilon_{1}})=0,$ then we have $\int_{U_{\varepsilon_{1}}}J_{f}d\mathcal{H}^{n}=0.$ Since $U_{\varepsilon_{1}}\cap f^{-1}(\mathbb{B}(y_{0},r))$
is a disjoint union components of $f^{-1}(\mathbb{B}(y_{0},r))$ and $\mathcal{H}^{n}(U_{\varepsilon_{1}}\cap f^{-1}(\mathbb{B}(y_{0},r)))\leq\mathcal{H}^{n}(U_{\varepsilon_{1}})\leq C,$ 
we deduce from Lemma \ref{ahkkdjk} that $0\leq{\rm deg}(f,U_{\varepsilon_{1}}\cap f^{-1}(\mathbb{B}(y_{0},r)))\leq{\rm deg}(f,U_{\varepsilon_{1}})=0.$ 
 Hence, by using of formula (\ref{shdhjsa}), we deduce the desired claim of equation (\ref{dwshiksz}).
Now, by virtue of Lemma \ref{lomlmass}, we obtain
 \begin{equation}\label{ed2rf2}
 \int_{U_{r}}J_{f}^{+}d\mathcal{H}^{n}=\int_{U_{r}}J_{f}^{-}d\mathcal{H}^{n}.
 \end{equation}
 Since from (\ref{jjksxc}), we have that 
$$\int_{U_{\varepsilon_{1}}}\frac{J_{f}^{-}}{\abs{f-y_{0}}^{n}}d\mathcal{H}^{n}\leq\int_{U_{\varepsilon_{1}}}\frac{\Sigma}{K}\aleq\left\|\Sigma\right\|_{L^{p}(U_{\varepsilon_{1}})}\cdot (\mathcal{H}^{n}(U_{\varepsilon_{1}}))^{\frac{p-1}{p}}<\infty.$$
Hence, by multiplying $nr^{n}$ in (\ref{ed2rf2}) and integrating, we obtain through the Fubini-Tonelli Theorem that 
$$\int_{U_{\varepsilon_{1}}}\left(\frac{J_{f}^{+}}{\abs{f-y_{0}}^{n}}-\frac{J_{f}^{+}}{\varepsilon_{1}}\right)d\mathcal{H}^{n}=\int_{U_{\varepsilon_{1}}}\left(\frac{J_{f}^{-}}{\abs{f-y_{0}}^{n}}-
\frac{J_{f}^{-}}{\varepsilon_{1}}\right)d\mathcal{H}^{n},$$
which yields that  
$$\int_{U_{\varepsilon_{1}}}\frac{J_{f}}{\abs{f-y_{0}}^{n}}d\mathcal{H}^{n}=0.$$ 
Now, by the definition of quasiregular value, we deduce 
\begin{equation}
\begin{aligned}
\int_{U_{\varepsilon_{1}}}\frac{\|apDf\|^{n}}{\abs{f-y_{0}}^{n}}d\mathcal{H}^{n}\leq&K\int_{U_{\varepsilon_{1}}}\frac{J_{f}}{\abs{f-y_{0}}^{n}}d\mathcal{H}^{n}+
K\int_{U_{\varepsilon_{1}}}\Sigma\\
=&K\int_{U_{\varepsilon_{1}}}\Sigma\aleq\left\|\Sigma\right\|_{L^{p}(U_{\varepsilon_{1}})}\cdot (\mathcal{H}^{n}(U_{\varepsilon_{1}}))^{\frac{p-1}{p}}<\infty.
\end{aligned}
\end{equation}
This finishes the proof of $\mathbf{Part}\;\mathbf{I}.$

$\mathbf{Part}\;\mathbf{II}.$ Let $u=\log\vert f-y_{0}\vert,$ then we show that there exists $Q>n,$ such that $u\in N_{loc}^{1,Q}(U_{\varepsilon_{1}}),$ and thus  
 $u\in L_{loc}^{\infty}(U_{\varepsilon_{1}}).$

Without loss of generality, we assume that $y_{0}=0.$ We first show that  $g_{u}\in L_{loc}^{n}(U_{\varepsilon_{1}}).$  For any $\lambda>0,$ we denote $\vert f\vert_{\lambda}=\max\{\vert f\vert,\lambda\}.$ Then we have $\vert f\vert_{\lambda}\in N_{loc}^{1,n}(\Omega)$ as $f\in N_{loc}^{1,n}(\Omega).$
Hence, by the locally Lipschitz continuous of $\log(\cdot),$ we deduce that $u_{\lambda}:=\log\vert f\vert_{\lambda}\in N_{loc}^{1,n}(\Omega).$ 
Hence, when $\vert f\vert\geq\lambda>0,$ we see from \cite[Proposition 2.17]{BB11}  that
$$g_{u_{\lambda}}=\frac{g_{\vert f\vert_{\lambda}}}{\vert f\vert_{\lambda}}\leq\frac{\|apDf\|}{\vert f\vert}<\infty$$ is 
 a minimal $n$-weak gradient of $u_{\lambda}.$  Hence, when $\vert f\vert\geq\lambda>0,$ we get 
 $$\int_{\Omega_{1}}g_{u_{\lambda}}^{n}d\mathcal{H}^{n}\leq\int_{U_{\varepsilon_{1}}}\frac{\|apDf\|^{n}}{\abs{f}^{n}}d\mathcal{H}^{n}\aleq\left\|\Sigma\right\|_{L^{p}(U_{\varepsilon_{1}})}\cdot (\mathcal{H}^{n}(U_{\varepsilon_{1}}))^{\frac{p-1}{p}}<\infty.$$
Now, by dominated convergence theorem, we deduce that 
$$\int_{\Omega_{1}}g_{u}^{n}d\mathcal{H}^{n}\leq\int_{U_{\varepsilon_{1}}}\frac{\|apDf\|^{n}}{\abs{f}^{n}}d\mathcal{H}^{n}\aleq\left\|\Sigma\right\|_{L^{p}(U_{\varepsilon_{1}})}\cdot (\mathcal{H}^{n}(U_{\varepsilon_{1}}))^{\frac{p-1}{p}}<\infty,$$
where $\Omega_{1}\subset\subset U_{\varepsilon_{1}}.$ So we have $g_{u}\in L_{loc}^{n}(U_{\varepsilon_{1}}).$

Next, we show that there exists $\beta>1,$ such that $g_{u}^{n}\in L_{loc}^{\beta}(U_{\varepsilon_{1}}).$ To do this, we first to show that $u\in L_{loc}^{1}(\Omega).$

Next, we show that $g_{u}^{n}\in L_{loc}^{\beta}(U_{\varepsilon_{1}}),$ where  $\beta>1.$ We do this by using of the  Caccioppoli-type estimate from Lemma \ref{shksujck}.
For every fixed $x\in U_{\varepsilon_{1}},$ we suppose that $0<r<2r<{\rm dist}(x,\partial U_{\varepsilon_{1}}),$ and suppose that the Lipschitz mapping $\eta\vert_{B(x,r)}=1$ and 
$(\vert\nabla\eta\vert)\vert_{B(x,2r)}\leq C/r.$ In addition, we suppose that 
\begin{equation}
  \psi_{a,\varepsilon}(t)=\left\{
\begin{aligned}
&\frac{\log\sqrt{t+\varepsilon}-c}{t^{\frac{n}{2}}},      &   &    {\rm if}\;{t\geq a^{2}},\\
&\frac{\log\sqrt{a^{2}+\varepsilon}-c}{a^{n}},       &      &{\rm if}\; {t\leq a^{2}.}
\end{aligned} \right.
\end{equation}
Hence, by using of Lemma \ref{shksujck}, we obtain that 
\begin{equation}\label{shbjzls}
\begin{aligned}
&\abs{\int_{B(x,2r)\cap\{\vert f\vert>a\}}\frac{\eta J_{f}}{\vert f\vert^{n-2}(\varepsilon+\vert f\vert^{2})}d\mathcal{H}^{n}}\\
\leq&\left\vert\log\sqrt{a^{2}+\varepsilon}-c\right\vert\int_{B(x,2r)\cap\{\vert f\vert<a\}}\frac{\vert J_{f}\vert}{a^{n}}d\mathcal{H}^{n}\\
+& 
\frac{C}{r}\int_{B(x,2r)\cap\{\vert f\vert>a\}}\frac{\vert\log\sqrt{\varepsilon+\vert f\vert^{2}}-c\vert\cdot\|apDf\|^{n-1}}{\vert f\vert^{n-1}}d\mathcal{H}^{n} \\
+& 
\frac{C}{r}\left\vert\log\sqrt{a^{2}+\varepsilon}-c\right\vert\int_{B(x,2r)\cap\{\vert f\vert<a\}}\frac{\vert f\vert\cdot\|apDf\|^{n-1}}{a^{n}}d\mathcal{H}^{n}.
\end{aligned}
\end{equation}
Since 
\begin{equation}
\begin{aligned}
\int_{B(x,2r)\cap\{\vert f\vert<a\}}\frac{\vert J_{f}\vert}{a^{n}}d\mathcal{H}^{n}\leq&\int_{B(x,2r)\cap\{\vert f\vert<a\}}\frac{\vert J_{f}\vert}{\vert f\vert^{n}}d\mathcal{H}^{n}\\
\leq&\int_{B(x,2r)\cap\{\vert f\vert<a\}}\frac{\vert apDf\vert^{n}}{\vert f\vert^{n}}d\mathcal{H}^{n},
\end{aligned}
\end{equation}
 $\vert apDf\vert^{n}/\vert f\vert^{n}\in L_{loc}^{1}(U_{\varepsilon_{1}})$ and $\mathcal{H}^{n}(f^{-1}(0))=0,$ we deduce that 
 $$\left\vert\log\sqrt{a^{2}+\varepsilon}-c\right\vert\int_{B(x,2r)\cap\{\vert f\vert<a\}}\frac{\vert J_{f}\vert}{a^{n}}d\mathcal{H}^{n}\rightarrow0\;\;{\rm as}\;\;a\rightarrow0.$$
Again, as $\vert apDf\vert^{n}/\vert f\vert^{n}\in L_{loc}^{1}(U_{\varepsilon_{1}}),$  $\mathcal{H}^{n}(f^{-1}(0))=0$ and 
\begin{equation}
\begin{aligned}
&\int_{B(x,2r)\cap\{\vert f\vert<a\}}\frac{\vert f\vert\cdot\|apDf\|^{n-1}}{a^{n}}d\mathcal{H}^{n}\leq\int_{B(x,2r)\cap\{\vert f\vert<a\}}\frac{\|apDf\|^{n-1}}{\vert f\vert^{n-1}}d\mathcal{H}^{n}\\
\leq&\left(\int_{B(x,2r)\cap\{\vert f\vert<a\}}\frac{\vert apDf\vert^{n}}{\vert f\vert^{n}}d\mathcal{H}^{n}\right)^{\frac{n-1}{n}}\left(\int_{B(x,2r)\cap\{\vert f\vert<a\}}\right)^{\frac{1}{n}}\\
\leq&\left(\int_{B(x,2r)\cap\{\vert f\vert<a\}}\frac{\vert apDf\vert^{n}}{\vert f\vert^{n}}d\mathcal{H}^{n}\right)^{\frac{n-1}{n}}\left(\mathcal{H}^{n}(U_{\varepsilon_{1}})\right)^{\frac{1}{n}},
\end{aligned}
\end{equation}
we obtain that 
 $$\frac{C}{r}\left\vert\log\sqrt{a^{2}+\varepsilon}-c\right\vert\int_{B(x,2r)\cap\{\vert f\vert<a\}}\frac{\vert f\vert\cdot\|apDf\|^{n-1}}{a^{n}}d\mathcal{H}^{n}\rightarrow0\;\;{\rm as}\;\;a\rightarrow0.$$
Since 
$$\frac{\vert\log\sqrt{\varepsilon+\vert f\vert^{2}}-c\vert\cdot\|apDf\|^{n-1}}{\vert f\vert^{n-1}}\aleq\frac{(\vert\log\vert f\vert\vert+C)\cdot\|apDf\|^{n-1}}{\vert f\vert^{n-1}}$$
for some $C>0,$  $\vert apDf\vert^{n-1}/\vert f\vert^{n-1}\in L_{loc}^{n/(n-1)}(U_{\varepsilon_{1}})$ and $\log\vert f\vert\in L_{loc}^{n}(U_{\varepsilon_{1}}),$ we get by dominated convergence theorem as $a\rightarrow0^{+}$ in
(\ref{shbjzls}) that 
\begin{equation}\label{shdsbsbjzlssdvs}
\begin{aligned}
\abs{\int_{B(x,2r)}\frac{\eta J_{f}}{\vert f\vert^{n-2}(\varepsilon+\vert f\vert^{2})}d\mathcal{H}^{n}}
\leq\frac{C}{r}\int_{B(x,2r)}\frac{\vert\log\sqrt{\varepsilon+\vert f\vert^{2}}-c\vert\cdot\|apDf\|^{n-1}}{\vert f\vert^{n-1}}d\mathcal{H}^{n}.
\end{aligned}
\end{equation}
 We use dominated convergence theorem again in (\ref{shdsbsbjzlssdvs}) as $\varepsilon\rightarrow0^{+},$ then we have  
\begin{equation}\label{shadvfbgnbjzlssdvs}
\begin{aligned}
\abs{\int_{B(x,2r)}\frac{\eta J_{f}}{\vert f\vert^{n}}d\mathcal{H}^{n}}
\leq\frac{C}{r}\int_{B(x,2r)}\frac{\vert\log\vert f\vert-c\vert\cdot\|apDf\|^{n-1}}{\vert f\vert^{n-1}}d\mathcal{H}^{n}.
\end{aligned}
\end{equation}
Now, by using of H\"{o}lder inequality and Sobolev-Poincar\'{e} inequality (\ref{asddfvddjaadlla}), we deduce that 
\begin{equation}\label{dhiidha}
\begin{aligned}
&\int_{B(x,2r)}\frac{\|apDf\|^{n-1}}{\vert f\vert^{n-1}}\cdot \vert\log\vert f\vert-c\vert d\mathcal{H}^{n}\\
\leq&\left(\int_{B(x,2r)}\left(\frac{\vert apDf\vert^{n}}{\vert f\vert^{n}}\right)^{\frac{n}{n+1}}d\mathcal{H}^{n}\right)^{\frac{n^{2}-1}{n^{2}}}
\left(\int_{B(x,2r)}\vert\log\vert f\vert-c\vert^{n^{2}}\right)^{\frac{1}{n^{2}}}\\
\aleq&\left(\int_{B(x,2r)}\left(\frac{\vert apDf\vert^{n}}{\vert f\vert^{n}}\right)^{\frac{n}{n+1}}d\mathcal{H}^{n}\right)^{\frac{n^{2}-1}{n^{2}}}\left(\int_{B(x,2\lambda r)}\left(\frac{\vert apDf\vert^{n}}{\vert f\vert^{n}}\right)^{\frac{n}{n+1}}d\mathcal{H}^{n}\right)^{\frac{n+1}{n^{2}}}\\
\aleq&\left(\int_{B(x,2\lambda r)}\left(\frac{\vert apDf\vert^{n}}{\vert f\vert^{n}}\right)^{\frac{n}{n+1}}d\mathcal{H}^{n}\right)^{\frac{n+1}{n}}.
\end{aligned}
\end{equation}
Combing inequalities (\ref{shadvfbgnbjzlssdvs}) and (\ref{dhiidha}), we have
\begin{equation}\label{1}
\begin{aligned}
\abs{\int_{B(x,2r)}\frac{\eta J_{f}}{\vert f\vert^{n}}d\mathcal{H}^{n}}
\aleq\frac{1}{r}\left(\int_{B(x,2\lambda r)}\left(\frac{\vert apDf\vert^{n}}{\vert f\vert^{n}}\right)^{\frac{n}{n+1}}d\mathcal{H}^{n}\right)^{\frac{n+1}{n}},
\end{aligned}
\end{equation}
which implies that 
\begin{equation}\label{123dhiidha}
\begin{aligned}
&\int_{B(x,r)}\frac{\|apDf\|^{n}}{\vert f\vert^{n}}d\mathcal{H}^{n}\leq\int_{B(x,2r)}\eta\frac{\|apDf\|^{n}}{\vert f\vert^{n}}d\mathcal{H}^{n}\\
\leq&K\abs{\int_{B(x,2r)}\frac{\eta J_{f}}{\vert f\vert^{n}}d\mathcal{H}^{n}}+\int_{B(x,2r)}\Sigma d\mathcal{H}^{n}\\
\aleq&\frac{1}{r}\left(\int_{B(x,2\lambda r)}\left(\frac{\vert apDf\vert^{n}}{\vert f\vert^{n}}\right)^{\frac{n}{n+1}}d\mathcal{H}^{n}\right)^{\frac{n+1}{n}}+
\int_{B(x,2\lambda r)}\Sigma d\mathcal{H}^{n}.\\
\end{aligned}
\end{equation}
Namely,
\begin{equation}\label{e14}
\begin{aligned}
&\int_{B(x,r)}\!\!\!\!\!\!\!\!\!\!\!\!\!\!\!\!\!\!\!\!\; {}-{} \,\,\,\,\,\,\,\,\frac{\|apDf\|^{n}}{\vert f\vert^{n}}d\mathcal{H}^{n}\aleq \left(\int_{B(x,2\lambda r)}\!\!\!\!\!\!\!\!\!\!\!\!\!\!\!\!\!\!\!\!\!\!\!\!\!\; {}-{} \,\,\,\,\,\,\,\,\,\,\,\;\left(\frac{\vert apDf\vert^{n}}{\vert f\vert^{n}}\right)^{\frac{n}{n+1}}d\mathcal{H}^{n}\right)^{\frac{n+1}{n}}\\
&\,\,\,\,\,\,\,\,\,\,\,\,\,\,\,\,\,\,\,\,\,\,\,\,\,\,\,\,\,\,\,\,\,\,\,\,\,\,\,\,\,\,\,\,\,\,\,\,\,\,\,\,\,\,\,\,\,\,\,\,\,\,\,\,\,\,\,\,\,\,\,\,\,\,\,\,\,\,\,\,\,\,\,\,\,\,\,\,\,\,\,\,\,\,\,\,\,\,\,
\,\,\,\,\,\,\,\,\,\,\,\,+
\int_{B(x,2\lambda r)}\!\!\!\!\!\!\!\!\!\!\!\!\!\!\!\!\!\!\!\!\!\!\!\!\!\; {}-{} \,\,\,\,\,\,\,\,\,\,\,\;\;\Sigma d\mathcal{H}^{n}.
\end{aligned}
\end{equation}
If we denote $q_{1}=(n+1)/n, r_{0}=pq_{1}>q_{1}, g=(\|apDf\|^{n}/\vert f\vert^{n})^{n/(n+1)}$ and $f=\Sigma^{n/(n+1)},$
then we have $f\in L_{loc}^{r_{0}}(U_{\varepsilon_{1}})$ and 
\begin{equation}
\begin{aligned}
&\int_{B(x,r)}\!\!\!\!\!\!\!\!\!\!\!\!\!\!\!\!\!\!\!\!\; {}-{} \,\,\,\,\,\,\,\,\,\,\,g^{q_{1}}\aleq\left[\left(\int_{B(x,2r)}\!\!\!\!\!\!\!\!\!\!\!\!\!\!\!\!\!\!\!\!\!\!\!\; {}-{} \,\,\,\,\,\,\,\,\,\,\,\,\,\,g\right)^{q_{1}}+
\int_{B(x,2r)}\!\!\!\!\!\!\!\!\!\!\!\!\!\!\!\!\!\!\!\!\!\!\!\; {}-{} \,\,\,\,\,\,\,\,\,\,\,\,\,f^{q_{1}}\right].\\
\end{aligned}
\end{equation}
Hence, by a Gehring's lemma on metric measure space \cite[Theorem 3.3]{Gol05}, we see that   there exists $\beta>1,$ such that $\|apDf\|^{n}/\vert f\vert^{n}\in L_{loc}^{\beta}(U_{\varepsilon_{1}}).$  So we have $g_{u}^{n}\in L_{loc}^{\beta}(U_{\varepsilon_{1}}),$ and thus $u\in N_{loc}^{1,\beta n}(U_{\varepsilon_{1}}).$ Now, by virtue of \cite[Theorem 5.1]{HK00}, we obtain that $u$ is local H\"{o}lder on $U_{\varepsilon_{1}},$ which yields that $u\in L_{loc}^{\infty}(U_{\varepsilon_{1}}).$ This finishes the proof.

$\mathbf{Part}\;\mathbf{III}.$ By showing that $u=\log\vert f-y_{0}\vert$ is unbounded near $x\in f^{-1}\{y_{0}\}.$ This is because $u$ is continuous and thus we get 
$$\lim_{x\rightarrow f^{-1}\{y_{0}\}}u(x)=\lim_{x\rightarrow f^{-1}\{y_{0}\}}\log\vert x-y_{0}\vert=\lim_{f(x)\rightarrow y_{0}}\log\vert f(x)-y_{0}\vert=\infty.$$

Now, we finish the proof of Lemma \ref{bgjxolohcn}.   We assume that  ${\rm deg}(f,U_{\varepsilon_{1}})=0,$ then  from $\mathbf{Part}\;\mathbf{II},$ we see that $u\in L_{loc}^{\infty}(U_{\varepsilon_{1}}).$
This contradicts the conclusion of  $\mathbf{Part}\;\mathbf{III}.$ This completes proof of Lemma \ref{bgjxolohcn}. 
\end{proof}

\subsection{Discrete type result for quasiregular values} 
In this section, we give the proof of following result. The idea of the proof is based on  \cite{TS62,Kir14}
\begin{lem}\label{th11000dsvfffbvd}
Let $K\geq1$ be a constant, $\Omega\subset\mathcal{S}$ be a domain and $y_{0}\in\mathbb{R}^{n}$. Suppose that $f$ belongs to  $N_{loc}^{1,n}(\Omega,\mathbb{R}^{n})\cap \mathcal{C}(\Omega,\mathbb{R}^{n})$ and satisfies the following inequality
\begin{equation}
\| {\rm ap}Df(x)\|^{n}\leq KJ_{f}(x)+\Sigma(x)\abs{f(x)-y_{0}}^{n}
\end{equation}
for almost everywhere $x\in\Omega.$ If $\Sigma\in L_{loc}^{p}(\Omega)$ for some $p>1,$  then $f^{-1}\{y_{0}\}$ is discrete.
\end{lem}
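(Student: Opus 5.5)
We reduce discreteness of $f^{-1}\{y_{0}\}$ to a counting argument on the components of small preimage neighbourhoods, in the spirit of the classical Titus--Young argument \cite{TS62} as adapted in \cite{Kir14}. Throughout we take $y_{0}=0$. Since $K$ is constant, the hypotheses of Theorem \ref{101209sdc}, Corollaries \ref{9981}--\ref{sxijsxsx}, Lemma \ref{lomlmass} and Lemma \ref{bgjxolohcn} are all met: $K\in L^{\infty}_{loc}$ and $\Sigma/K\in L^{p}_{loc}$, and after shrinking $p$ we may also assume $\Sigma\in L^{1+\varepsilon}_{loc}$. Hence $f^{-1}\{0\}$ is totally disconnected. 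Moreover, the small $x$-components $U_{\varepsilon}$ of $f^{-1}(\mathbb{B}(0,\varepsilon))$ are compactly contained in $\Omega$, have measure tending to $0$, and satisfy ${\rm deg}(f,U_{\varepsilon})>0$.

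Fix $x_{0}\in f^{-1}\{0\}$ and argue by contradiction, assuming $x_{0}$ is an accumulation point of $f^{-1}\{0\}$. The plan has three steps.

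\emph{Step 1.} Choose $\varepsilon_{1}$ as in Corollary \ref{9981}. Then $U:=U_{\varepsilon_{1}}\subset\subset\Omega$, and $\mathcal{H}^{n}(U)$ is below the constant $C$ of Lemma \ref{ahkkdjk}. Set $N:={\rm deg}(f,U)$; this is a finite positive integer by Lemma \ref{bgjxolohcn}. One could alternatively bound $N$ via Lemma \ref{lomlmass} by $\omega_{n}^{-1}\varepsilon_{1}^{-n}\int_{U}|J_{f}|$.

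\emph{Step 2.} Since $x_{0}$ is not isolated, pick $N+1$ distinct points $x_{0},x_{1},\dots,x_{N}\in f^{-1}\{0\}\cap U$. By Corollary \ref{sjslx}, applied pairwise and taking the minimum of the finitely many radii, there is $0<\delta<\varepsilon_{1}$ such that these points lie in pairwise different components $V_{0},\dots,V_{N}$ of $f^{-1}(\mathbb{B}(0,\delta))$. Each $V_{i}$ is connected, meets $U$, and maps into $\mathbb{B}(0,\varepsilon_{1})$, so $V_{i}\subset U$; thus $\overline{V_{i}}\subset\subset\Omega$. Each $V_{i}$ is itself a connected component of a preimage of a ball with compact closure and measure at most $\mathcal{H}^{n}(U)\leq C$, so Lemma \ref{bgjxolohcn} gives ${\rm deg}(f,V_{i})\geq1$.

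\emph{Step 3.} Use additivity of the degree. Because $f(\partial U)\cap\mathbb{B}(0,\varepsilon_{1})=\emptyset$ and $\mathbb{B}(0,\delta)$ is connected, ${\rm deg}(y,f,U)=N$ for every $y\in\mathbb{B}(0,\delta)$. Moreover, $U\cap f^{-1}(\mathbb{B}(0,\delta))$ is the disjoint union of the components of $f^{-1}(\mathbb{B}(0,\delta))$ lying in $U$. The set $f^{-1}(\overline{\mathbb{B}(0,\delta')})\cap\overline{U}$ with $\delta'<\delta$ is compact, so only finitely many of these components meet it. Every component has nonnegative degree by Lemma \ref{ahkkdjk}, since their measures are at most $C$. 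Evaluating at $y=0$ gives
\[
N={\rm deg}(0,f,U)=\sum_{W}{\rm deg}(0,f,W)\geq\sum_{i=0}^{N}{\rm deg}(f,V_{i})\geq N+1,
\]
which is a contradiction. Hence $f^{-1}\{0\}$ is discrete.

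The main obstacle is Step 3: justifying the additivity/excision of the local degree for generalized manifolds over the possibly infinitely many components of $f^{-1}(\mathbb{B}(0,\delta))\cap U$. I would handle this by excising the compact set $f^{-1}(0)\cap\overline{U}$, which is covered by finitely many components. The remaining components contain no zeros, so they contribute degree $0$ at $y=0$. The standard excision and additivity properties of the Alexander--Spanier degree from \cite{HS02} then apply to this finite family.
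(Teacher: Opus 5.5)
Your argument is correct. It has the same Titus--Young skeleton as the paper's proof: $N=\deg(f,U_{\varepsilon_1})>0$, then $N+1$ zeros in disjoint pieces of degree $\geq 1$, all other pieces of degree $\geq 0$, hence a contradiction. The difference is the decomposition.

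The paper separates $x_1,\dots,x_{N+1}$ by pairwise disjoint metric balls $B(x_i,r_i)$ with $y_0\notin f(\partial B(x_i,r_i))$. It then covers the remaining zeros in $U_{\varepsilon_1}$ by a finite family $\{U_i\}$. You instead separate the zeros by components $V_0,\dots,V_N$ of $f^{-1}(\mathbb{B}(0,\delta))$ via Corollary \ref{sjslx}, and sum over the components of $U\cap f^{-1}(\mathbb{B}(0,\delta))$.

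Your choice makes every piece an object the earlier lemmas actually address. The inclusion $f(\partial W)\subset\partial\mathbb{B}(0,\delta)$ gives excision for free. The facts $\overline{W}\subset\overline{U}\subset\subset\Omega$ and $\mathcal{H}^n(W)\leq\mathcal{H}^n(U)\leq C$ let you invoke Lemma \ref{ahkkdjk} ($\deg\geq 0$) and Lemma \ref{bgjxolohcn} ($\deg(f,V_i)\geq 1$) verbatim.

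In the paper's version, three points are left open:
\begin{itemize}
\item $\deg(f,y_0,B_i)\geq 1$ for a metric ball, which may contain further zeros, is not an instance of Lemma \ref{bgjxolohcn}.
\item The signs of the $\deg(f,y_0,U_i)$ are not discussed.
\item The admissible radii are asserted without proof. They do exist for a.e.\ $r$, because $\mathcal{H}^1(f^{-1}\{y_0\})=0$.
\end{itemize}
Justifying the first two points amounts to passing to preimage components inside the balls, i.e.\ to your decomposition. So the paper's version is shorter and closer to the classical ball-based argument, while yours closes using only the stated lemmas.

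One thing you should make explicit: Lemma \ref{bgjxolohcn} is valid only for components that contain a zero and whose measure is at most the constant of Lemma \ref{ahkkdjk}. A component missing $y_0$ has degree $0$, and Part III of that lemma's proof needs a zero in the component. Your $V_i$ meet both requirements, so the application is legitimate.
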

\begin{proof}
Without loss of generality, we assume that $\Omega$ is a bounded domain, $f\in N^{1,n}(\Omega,\mathbb{R}^{n})\cap \mathcal{C}(\Omega,\mathbb{R}^{n})$ and $\Sigma\in L^{p}(\Omega).$
We assume to the contrary that  $f^{-1}\{y_{0}\}$ is not discrete. Then, there exists $x_{0}$ such that for every neighborhood $U_{0}$ of $x_{0},$ there always exists a $x\in f^{-1}\{y_{0}\}\setminus\{x_{0}\}.$
Now, by using of Lemma \ref{bgjxolohcn}, we select $\varepsilon_{1}>0$ such that if  $U_{\varepsilon_{1}}$ is a $x_{0}$-connected component of $f^{-1}(\mathbb{B}(y_{0},\varepsilon_{1})),$ which $\overline{U_{\varepsilon_{1}}}$ is compactly contained in $\Omega,$ then we have ${\rm deg}(f,U_{\varepsilon_{1}})>0.$  If we denote $N={\rm deg}(f,U_{\varepsilon_{1}}),$ then by formula (\ref{shdhjsa})
and  $J_{f}\in L^{1}(\Omega),$ we also see that ${\rm deg}(f,U_{\varepsilon_{1}})$ is finite. Now,  as  $f^{-1}\{y_{0}\}$ is not discrete,  we can find $x_{1},x_{2},\ldots,x_{N+1}\in U_{\varepsilon_{1}}\cap f^{-1}\{y_{0}\}$ and radii $r_{i},$ such that $B(x_{i},r_{i})\cap B(x_{j},r_{j})=\emptyset$ for every different $i,j\in\{1,2,\ldots,N+1\}$ and $y_{0}\notin\bigcup_{i=1}^{N+1}f(\partial B(x_{i},r_{i})).$
We denote $B_{j}$ as the $x_{j}$-connected component of those balls, then $y_{0}\notin\bigcup_{i=1}^{N+1}f(\partial B{i}).$ Now, since $U_{\varepsilon_{1}}\cap f^{-1}\{y_{0}\}\setminus\bigcup_{i=1}^{N+1}B{i}$
is compact, we chose a finite subcover $\{U_{i}\}_{i=1}^{M},$
such that $N={\rm deg}(f,U_{\varepsilon_{1}})=\sum_{i=1}^{N+1}{\rm deg}(f,y_{0},B_{i})+\sum_{i=1}^{M}{\rm deg}(f,y_{0},U_{i})\geq N+1.$ This is contradiction and thus This finishes the proof of Lemma
Lemma \ref{th11000dsvfffbvd}.
\end{proof}

\subsection{Locally positive index for values of finite distortion} 
This section is devoting to proving the following conclusion.
\begin{lem}\label{th11000dsvfffbvdasxcscs}
Let $K\geq1$ be a constant, $\Omega\subset\mathcal{S}$ be a domain and $y_{0}\in\mathbb{R}^{n}$. Suppose that $f$ belongs to  $N_{loc}^{1,n}(\Omega,\mathbb{R}^{n})\cap \mathcal{C}(\Omega,\mathbb{R}^{n})$ and satisfies the following inequality
\begin{equation}
\| {\rm ap}Df(x)\|^{n}\leq KJ_{f}(x)+\Sigma(x)\abs{f(x)-y_{0}}^{n}
\end{equation}
for almost everywhere $x\in\Omega.$ If $\Sigma\in L_{loc}^{p}(\Omega)$ for some $p>1,$
then the local index $i(x,f)$ is positive for every $x\in f^{-1}\{y_{0}\}.$
\end{lem}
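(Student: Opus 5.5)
We combine the discreteness result, Lemma \ref{th11000dsvfffbvd}, with the positivity of degree on small preimage components, Lemma \ref{bgjxolohcn}. Fix $x_{0}\in f^{-1}\{y_{0}\}$. By Lemma \ref{th11000dsvfffbvd} the set $f^{-1}\{y_{0}\}$ is discrete, so there is a neighborhood $V$ of $x_{0}$ with $\overline{V}\subset\subset\Omega$ and $\overline{V}\cap f^{-1}\{y_{0}\}=\{x_{0}\}$. Hence the local index $i(x_{0},f)={\rm deg}(f,V,y_{0})$ is well defined and independent of the choice of such $V$. It therefore suffices to exhibit one admissible neighborhood on which the degree at $y_{0}$ is positive.

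The candidate neighborhood is the $x_{0}$-component $U_{\varepsilon}$ of $f^{-1}(\mathbb{B}(y_{0},\varepsilon))$ for small $\varepsilon$. By Corollary \ref{9981}, whose hypotheses hold in the quasiregular-value case with $K$ constant and $\Sigma\in L^{p}_{loc}$, there is $\varepsilon_{1}>0$ such that $\overline{U_{\varepsilon}}\subset\subset\Omega$ for all $\varepsilon\le\varepsilon_{1}$. By Corollary \ref{sxijsxsx}, $\mathcal{H}^{n}(U_{\varepsilon})\to 0$ as $\varepsilon\to0^{+}$. I would then show that for $\varepsilon$ small, $\overline{U_{\varepsilon}}\cap f^{-1}\{y_{0}\}=\{x_{0}\}$, in three steps.
\begin{enumerate}
\item Discreteness together with the compactness of $\overline{U_{\varepsilon_{1}}}$ shows that $U_{\varepsilon_{1}}\cap f^{-1}\{y_{0}\}$ is a finite set $\{x_{0},x_{1},\dots,x_{k}\}$.
\item By Corollary \ref{sjslx}, applied to each pair $(x_{0},x_{j})$, there is $\varepsilon_{0}\le\varepsilon_{1}$ such that $x_{0}$ and $x_{j}$ lie in different components of $f^{-1}(\mathbb{B}(y_{0},\varepsilon_{0}))$ for every $j$.
\item Since $U_{\varepsilon}$ is decreasing in $\varepsilon$, it follows that $U_{\varepsilon}\cap f^{-1}\{y_{0}\}=\{x_{0}\}$ for all $\varepsilon\le\varepsilon_{0}$.
\end{enumerate}
Also $\partial U_{\varepsilon}\subset f^{-1}(\partial\mathbb{B}(y_{0},\varepsilon))$, because $U_{\varepsilon}$ is a component of an open set and its closure is compact in $\Omega$. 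Hence $f\neq y_{0}$ on $\partial U_{\varepsilon}$, and so $\overline{U_{\varepsilon}}\cap f^{-1}\{y_{0}\}=\{x_{0}\}$.

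Consequently $i(x_{0},f)={\rm deg}(f,U_{\varepsilon},y_{0})$. Since $f(\partial U_{\varepsilon})\subset\partial\mathbb{B}(y_{0},\varepsilon)$ and $\mathbb{B}(y_{0},\varepsilon)$ is connected, the degree is constant on $\mathbb{B}(y_{0},\varepsilon)$ and equals ${\rm deg}(f,U_{\varepsilon})$ in the sense of Lemma \ref{lomlmass}. Applying Lemma \ref{bgjxolohcn} to $U_{\varepsilon}$ (for $\varepsilon$ small enough that $\mathcal{H}^{n}(U_{\varepsilon})$ is below the threshold of Lemma \ref{ahkkdjk}, which Corollary \ref{sxijsxsx} allows) gives ${\rm deg}(f,U_{\varepsilon})>0$, so $i(x_{0},f)>0$.

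The main obstacle is the bookkeeping that identifies the local index with the degree over the whole component $U_{\varepsilon}$. This needs that $U_{\varepsilon}$ isolates $x_{0}$ in the preimage and that $y_{0}$ lies in the component of $\mathbb{R}^{n}\setminus f(\partial U_{\varepsilon})$ containing $\mathbb{B}(y_{0},\varepsilon)$. Both facts rest on the topological Corollaries \ref{9981} and \ref{sjslx}, which were only sketched via \cite{KO222}. I would verify directly that $\partial U_{\varepsilon}$ maps into the sphere, using the compactness of $\overline{U_{\varepsilon}}$ and the fact that $U_{\varepsilon}$ is a component of $f^{-1}(\mathbb{B}(y_{0},\varepsilon))$.
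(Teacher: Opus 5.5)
Your proposal is correct and follows the paper's proof. The paper likewise takes the $x_{0}$-component $U_{\varepsilon}$ with ${\rm deg}(f,U_{\varepsilon})>0$ from Lemma \ref{bgjxolohcn}, and uses the discreteness of Lemma \ref{th11000dsvfffbvd} to shrink $\varepsilon$ until $\overline{U_{\varepsilon}}\cap f^{-1}\{y_{0}\}=\{x_{0}\}$, so that $i(x_{0},f)={\rm deg}(f,U_{\varepsilon})>0$. Your extra steps (finiteness of the zeros in $\overline{U_{\varepsilon_{1}}}$, $f(\partial U_{\varepsilon})\subset\partial\mathbb{B}(y_{0},\varepsilon)$, and keeping $\mathcal{H}^{n}(U_{\varepsilon})$ below the threshold of Lemma \ref{ahkkdjk}) just make explicit the ``appropriate adjustment of $\varepsilon_{1}$'' that the paper leaves unstated.
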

\begin{proof}
Without loss of generality, we assume that $\Omega$ is a bounded domain, $f\in N^{1,n}(\Omega,\mathbb{R}^{n})\cap \mathcal{C}(\Omega,\mathbb{R}^{n})$ and $\Sigma\in L^{p}(\Omega).$
Then, according to  Lemma \ref{bgjxolohcn}, we see that for  every $x\in f^{-1}\{y_{0}\},$ there exists $\varepsilon_{1}>0,$ such that $U_{\varepsilon_{1}}$ is a connected $x$-component of $f^{-1}(\mathbb{B}(y_{0},\varepsilon_{1})),$ which $\overline{U_{\varepsilon_{1}}}$ is compactly contained in $\Omega$ and satisfying that
${\rm deg}(f,U_{\varepsilon_{1}})>0.$ According to Lemma \ref{th11000dsvfffbvd}, we can make appropriate adjustment of $\varepsilon_{1},$ such that that $i(x,f)={\rm deg}(f,U_{\varepsilon_{1}}),$ and thus is positive. This finishes the proof of Lemma 
\ref{th11000dsvfffbvdasxcscs}.
\end{proof}

\subsection{Local openness for quasiregular values} 
The proof of following lemma is based on \cite[Lemma 6.3]{KO222}
\begin{lem}\label{txcsh11000dsvfffbvdasxcscc ssc s}
Let $K\geq1$ be a constant, $\Omega\subset\mathcal{S}$ be a domain and $y_{0}\in\mathbb{R}^{n}$. Suppose that $f$ belongs to  $N_{loc}^{1,n}(\Omega,\mathbb{R}^{n})\cap \mathcal{C}(\Omega,\mathbb{R}^{n})$ and satisfies the following inequality
\begin{equation}
\| {\rm ap}Df(x)\|^{n}\leq KJ_{f}(x)+\Sigma(x)\abs{f(x)-y_{0}}^{n}
\end{equation}
for almost everywhere $x\in\Omega.$ If $\Sigma\in L_{loc}^{p}(\Omega)$ for some $p>1,$ then for every neighborhood $V$ of every  $x\in f^{-1}\{y_{0}\},$  there is valid for $y_{0}\in {\rm int}f(V).$
\end{lem}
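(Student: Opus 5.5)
The plan is to combine the discreteness of Lemma \ref{th11000dsvfffbvd} with the positive local index of Lemma \ref{th11000dsvfffbvdasxcscs}, and then use the standard degree-theoretic fact that a nonzero degree forces the ball $\mathbb{B}(y_{0},\delta)$ into the image. This follows \cite[Lemma 6.3]{KO222}.

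Fix $x\in f^{-1}\{y_{0}\}$ and a neighborhood $V$ of $x$. We may shrink $V$, since $\mathrm{int} f(V')\subset \mathrm{int} f(V)$ for $V'\subset V$. By Lemma \ref{th11000dsvfffbvd}, $f^{-1}\{y_{0}\}$ is discrete. Hence there is a relatively compact connected open neighborhood $D\subset V$ of $x$ with $\overline{D}\cap f^{-1}\{y_{0}\}=\{x\}$. We may take $D$ to be a small ball, or its $x$-component, as in the proof of Lemma \ref{th11000dsvfffbvd}.

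Then $y_{0}\notin f(\partial D)$. Since $\partial D$ is compact and $f$ is continuous, $f(\partial D)$ is compact, so
$$\delta:={\rm dist}(y_{0},f(\partial D))>0.$$
The ball $\mathbb{B}(y_{0},\delta)$ is connected and misses $f(\partial D)$. It is therefore contained in a single component $W$ of $\mathbb{R}^{n}\setminus f(\partial D)$. By the definition of local degree, $y\mapsto{\rm deg}(f,D,y)$ is constant on $W$. Consequently, for every $y\in\mathbb{B}(y_{0},\delta)$,
$${\rm deg}(f,D,y)={\rm deg}(f,D,y_{0})=i(x,f)>0,$$
where the last equality is the definition of the local index and the positivity is Lemma \ref{th11000dsvfffbvdasxcscs}.

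It remains to use that a nonzero degree at $y$ implies $y\in f(D)$. If $y\notin f(D)$, then $f^{-1}(W')\cap D=\emptyset$ for a small connected neighborhood $W'\subset W$ of $y$. The map $H_{c}^{n}(W')\to H_{c}^{n}(f^{-1}(W')\cap D)\to H_{c}^{n}(D)$ would then factor through the zero group, giving degree $0$. Naturality under restriction to $W'\subset W$ identifies this with ${\rm deg}(f,D,y)$, which is a contradiction. Hence $\mathbb{B}(y_{0},\delta)\subset f(D)\subset f(V)$, so $y_{0}\in{\rm int}f(V)$.

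The main point requiring care is this last step: checking that the Alexander–Spanier definition of degree for generalized manifolds indeed vanishes off the image and is locally constant on components. Both facts are standard in \cite{HS02}. We would cite them rather than reprove them.
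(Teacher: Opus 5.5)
Your argument is correct, given Lemma~\ref{th11000dsvfffbvd} and Lemma~\ref{th11000dsvfffbvdasxcscs}, but it is organized differently from the paper's proof.

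The paper argues by contradiction. It uses the sets $U_{\varepsilon}$, the $x_{0}$-components of $f^{-1}(\mathbb{B}(y_{0},\varepsilon))$, and shows ${\rm deg}(y,f,U_{\varepsilon})>0$ for every $y\in\mathbb{B}(y_{0},\varepsilon)$. From this it gets $f(U_{\varepsilon})=\mathbb{B}(y_{0},\varepsilon)$ for all $\varepsilon<\varepsilon_{0}$. It then uses that the compact sets $\overline{U_{\varepsilon}}$ shrink to $x_{0}$, which forces $U_{\varepsilon}\subset U_{0}$ for small $\varepsilon$ and contradicts the assumption.

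You argue directly. You take any small connected neighborhood $D$ that isolates $x$ in $f^{-1}\{y_{0}\}$. You note that $\mathbb{B}(y_{0},\delta)$, with $\delta={\rm dist}(y_{0},f(\partial D))$, lies in a single component of $\mathbb{R}^{n}\setminus f(\partial D)$. You then read off ${\rm deg}(f,D,y)=i(x,f)>0$ on that ball straight from the definition of the local index.

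Both routes rest on the same two degree facts:
\begin{itemize}
\item the degree is constant on components of the complement of the boundary image;
\item nonzero degree at $y$ forces $y\in f(D)$ (the paper takes this from Kirsil\"a's Lemma 2.7).
\end{itemize}
Your version avoids the contradiction and the shrinking-components step and gives an explicit radius $\delta$. The paper's version gives the slightly stronger structural statement $f(U_{\varepsilon})=\mathbb{B}(y_{0},\varepsilon)$ for all small $\varepsilon$.

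Your justification of the second fact works once you note that $y\notin f(D)\cup f(\partial D)=f(\overline{D})$. Since $f(\overline{D})$ is compact, a small ball around $y$ misses it.

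One caveat applies equally to your proof and the paper's: $f$ must be nonconstant, as in Theorem~\ref{00990099199}. The map $f\equiv y_{0}$ satisfies all the stated hypotheses but not the conclusion. This assumption enters through Lemma~\ref{th11000dsvfffbvd}.
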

\begin{proof}
Suppose that there is a $x_{0}\in f^{-1}\{y_{0}\},$ such that there exists an open set $U_{0}$ of $x_{0}$ satisfying that $\mathbb{B}(y_{0},\varepsilon)\setminus f(U_{0})\neq\emptyset$ for all small $\varepsilon>0.$

We chose a $\varepsilon_{0}>0,$ such that $U_{\varepsilon_{0}}\subset\subset\Omega$ and satisfies $\overline{U_{\varepsilon_{0}}}\cap f^{-1}\{y_{0}\}=x_{0}$ and ${\rm deg}(f,U_{\varepsilon_{0}})>0.$
Now, for any $\varepsilon<\varepsilon_{0},$ we see that if denote $U_{\varepsilon}$ is the $x_{0}$-connected component of $f^{-1}(\mathbb{B}(y_{0},\varepsilon)),$ then we have 
$\overline{U_{\varepsilon}}\cap f^{-1}\{y_{0}\}=x_{0}$ and ${\rm deg}(f,U_{\varepsilon})>0.$ So by Lemma \ref{th11000dsvfffbvdasxcscs}, we have that  ${\rm deg}(y_{0}, f,U_{\varepsilon})>0$ for all 
$y_{0}\in\mathbb{B}(y_{0},\varepsilon).$ According to \cite[Lemma 2.7]{Kir16}, we have that $f\in f(U_{\varepsilon}).$ Hence, we have $\mathbb{B}(y_{0},\varepsilon)\subset f(U_{\varepsilon}).$
In addition, it is obvious that  $ f(U_{\varepsilon})\subset\mathbb{B}(y_{0},\varepsilon).$ Therefore, we have $f(U_{\varepsilon})=\mathbb{B}(y_{0},\varepsilon)$ for all
$\varepsilon<\varepsilon_{0}.$

From above, we see that $\mathbb{B}(y_{0},\varepsilon)\setminus f(U_{0})=f(U_{\varepsilon})\setminus f(U_{0})\neq\emptyset$ for all $\varepsilon<\varepsilon_{0}.$
So $U_{\varepsilon}\setminus U_{0}\neq\emptyset$ and thus $\overline{U_{\varepsilon}}\setminus U_{0}\neq\emptyset$ for all $\varepsilon<\varepsilon_{0}.$ This is contradiction since
$\bigcap_{\varepsilon<\varepsilon_{0}}\overline{U_{\varepsilon}}\setminus U_{0}=x_{0}\setminus U_{0}=\emptyset.$ This finishes the proof of Lemma \ref{txcsh11000dsvfffbvdasxcscc ssc s}.
\end{proof}



\section{Acknowledge}
This work was supported by Guangdong Basic and Applied Basic Research Foundation (No. 2022A1515110967 and No. 2023A1515011809).



\end{document}